\def\today{\ifcase \month \or
   January \or February \or March \or April \or
   May \or June \or July \or August \or
   September \or October \or November \or December \fi
   \space\number\day , \number\year}
  \newcommand\@dotsep{4.5}
  \def\@tocline#1#2#3#4#5#6#7{\relax
     \ifnum #1>\c@tocdepth % then omit
     \else
     \par \addpenalty\@secpenalty\addvspace{#2}%
     \begingroup \hyphenpenalty\@M
     \@ifempty{#4}{%
     \@tempdima\csname r@tocindent\number#1\endcsname\relax
        }{%
         \@tempdima#4\relax
           }%
      \parindent\z@ \leftskip#3\relax \advance\leftskip\@tempdima\relax
      \rightskip\@pnumwidth plus1em \parfillskip-\@pnumwidth
       #5\leavevmode\hskip-\@tempdima #6\relax
       \leaders\hbox{$\m@th
       \mkern \@dotsep mu\hbox{.}\mkern \@dotsep mu$}\hfill
       \hbox to\@pnumwidth{\@tocpagenum{#7}}\par
       \nobreak
        \endgroup
         \fi}
\begin{document}

\makeatletter
\@addtoreset{figure}{section}
\def\thefigure{\thesection.\@arabic\c@figure}
\def\fps@figure{h,t}
\@addtoreset{table}{bsection}

\def\thetable{\thesection.\@arabic\c@table}
\def\fps@table{h, t}
\@addtoreset{equation}{section}
\def\theequation{%\thesection.
\arabic{equation}}
\makeatother

\newcommand{\bfi}{\bfseries\itshape}

\newtheorem{theorem}{Theorem}
\newtheorem{acknowledgment}[theorem]{Acknowledgment}
\newtheorem{algorithm}[theorem]{Algorithm}
\newtheorem{axiom}[theorem]{Axiom}
\newtheorem{case}[theorem]{Case}
\newtheorem{claim}[theorem]{Claim}
\newtheorem{conclusion}[theorem]{Conclusion}
\newtheorem{condition}[theorem]{Condition}
\newtheorem{conjecture}[theorem]{Conjecture}
\newtheorem{construction}[theorem]{Construction}
\newtheorem{corollary}[theorem]{Corollary}
\newtheorem{criterion}[theorem]{Criterion}
\newtheorem{data}[theorem]{Data}
\newtheorem{definition}[theorem]{Definition}
\newtheorem{example}[theorem]{Example}
\newtheorem{lemma}[theorem]{Lemma}
\newtheorem{notation}[theorem]{Notation}
\newtheorem{problem}[theorem]{Problem}
\newtheorem{proposition}[theorem]{Proposition}
\newtheorem{question}[theorem]{Question}
\newtheorem{remark}[theorem]{Remark}
\newtheorem{setting}[theorem]{Setting}
\numberwithin{theorem}{section}
\numberwithin{equation}{section}

%%% Todo
\newcommand{\todo}[1]{\vspace{5 mm}\par \noindent
\framebox{\begin{minipage}[c]{0.85 \textwidth}
\tt #1 \end{minipage}}\vspace{5 mm}\par}
%%%

\newcommand{\1}{{\bf 1}}

\newcommand{\hotimes}{\widehat\otimes}

\newcommand{\Ad}{{\rm Ad}}
\newcommand{\Alt}{{\rm Alt}\,}
\newcommand{\Ci}{{\mathcal C}^\infty}
\newcommand{\comp}{\circ}
\newcommand{\C}{\text{\bf C}}
\newcommand{\D}{\text{\bf D}}
\newcommand{\ph}{\text{\bf P}}
\newcommand{\de}{{\rm d}}
\newcommand{\ev}{{\rm ev}}
\newcommand{\fimes}{\mathop{\times}\limits}
\newcommand{\id}{{\rm id}}
\newcommand{\ie}{{\rm i}}
\newcommand{\Cp}{\textbf {CPos}\,}
\newcommand{\End}{{\rm End}\,}
\newcommand{\Gr}{{\rm Gr}}
\newcommand{\GL}{{\rm GL}}
\newcommand{\Hilb}{{\bf Hilb}\,}
\newcommand{\Hom}{{\rm Hom}}
\newcommand{\Ker}{{\rm Ker}\,}
\newcommand{\GLH}{\textbf{GrLHer}}
\newcommand{\HLH}{\textbf{HomogLHer}}
\newcommand{\LH}{\textbf{LHer}}
\newcommand{\Kern}{\textbf {Kern}}
\newcommand{\Lie}{\textbf{L}}
\newcommand{\lf}{{\rm l}}
\newcommand{\pr}{{\rm pr}}
\newcommand{\Ran}{{\rm Ran}\,}

\newcommand{\RK}{{\mathcal P}{\mathcal K}^{-*}}
\newcommand{\spann}{{\rm span}}
\newcommand{\SLH}{\textbf {StLHer}}
\newcommand{\Rg}{\textbf {RepGLH}}
\newcommand{\Rep}{\textbf {sRep}}

\newcommand{\Tr}{{\rm Tr}\,}
\newcommand{\Tran}{\textbf{Trans}}

\newcommand{\G}{{\rm G}}
\newcommand{\U}{{\rm U}}
\newcommand{\VB}{{\rm VB}}

\newcommand{\Bc}{{\mathcal B}}
\newcommand{\Cc}{{\mathcal C}}
\newcommand{\Dc}{{\mathcal D}}
\newcommand{\Ec}{{\mathcal E}}
\newcommand{\Gc}{{\mathcal G}}
\newcommand{\Hc}{{\mathcal H}}
\newcommand{\Kc}{{\mathcal K}}
\newcommand{\Oc}{{\mathcal O}}
\newcommand{\Pc}{{\mathcal P}}
\newcommand{\Qc}{{\mathcal Q}}
\newcommand{\Rc}{{\mathcal R}}
\newcommand{\Sc}{{\mathcal S}}
\newcommand{\Tc}{{\mathcal T}}
\newcommand{\Uc}{{\mathcal U}}
\newcommand{\Vc}{{\mathcal V}}
\newcommand{\Xc}{{\mathcal X}}
\newcommand{\Yc}{{\mathcal Y}}
\newcommand{\Zc}{{\mathcal Z}}
\newcommand{\Ag}{{\mathfrak A}}
\newcommand{\hg}{{\mathfrak h}}
\newcommand{\mg}{{\mathfrak m}}
\newcommand{\nng}{{\mathfrak n}}
\newcommand{\pg}{{\mathfrak p}}
\newcommand{\Sg}{{\mathfrak S}}

\markboth{}{}

%\begin{document}

\makeatletter
\title[Universal objects in categories of reproducing kernels]{Universal objects in categories of reproducing kernels}
\author{Daniel Belti\c t\u a and  Jos\'e E. Gal\'e}
\address{Institute of Mathematics ``Simion
Stoilow'' of the Romanian Academy, 
P.O. Box 1-764, Bucharest, Romania}
\email{Daniel.Beltita@imar.ro}
\address{Departamento de matem\'aticas and I.U.M.A., 
Universidad de Zaragoza, 50009 Zaragoza, Spain}
\email{gale@unizar.es}
\thanks{This research has been partly  supported by Project MTM2007-61446, DGI-FEDER, of the MCYT, Spain; the second author has also been supported by Project E-64, D.G. Arag\'on, Spain, while the first author acknowledges  
partial financial support from the CNCSIS grant PNII - Programme ``Idei'' (code 1194).}

\keywords{reproducing kernel; category theory; vector bundle; tautological bundle; Grassmann manifold; 
completely positive map; universal object}
\subjclass[2000]{Primary 46E22; Secondary 47B32, 46L05, 18A05, 58B12}
\date{December 1, 2009}
%.\textit{File name}: \texttt{catego3marzo09.tex}}
\makeatother

\begin{abstract}
We continue our earlier investigation on generalized reproducing kernels, 
in connection with the complex geometry of $C^*$- algebra representations, 
by looking at them as the objects of an appropriate category.  
Thus the correspondence between reproducing $(-*)$-kernels 
and the associated Hilbert spaces of sections of vector bundles is made into a functor. 
We construct reproducing $(-*)$-kernels with universality properties 
with respect to the operation of pull-back. 
We show how completely positive maps can be regarded as  pull-backs of universal ones  
linked to the tautological bundle over the Grassmann manifold 
of the Hilbert space $\ell^2({\mathbb N})$.
\end{abstract}

\maketitle

\tableofcontents

%\bigbreak

\section*{Introduction}

The present work belongs to a line of research which has been initiated in 
the papers \cite{BR06}, \cite{BG07},  \cite{BG08},
and concerns representation theory for infinite-dimensional Lie groups on the one hand, 
and an approach to operator algebras by means of techniques from differential geometry 
on the other hand.
In \cite{BR06}, geometric realizations of GNS representations on 
groups of unitaries in $C^*$-algebras are obtained, in the spirit of the Bott-Borel-Weil theorem. 
Hilbertian section spaces of these realizations, in that infinite-dimensional setting, 
are constructed using the tool of reproducing kernels on vector bundles. 
As regards holomorphy, there are a few cases of unitary groups 
(or homogeneous spaces on which those groups act) 
where such a structural property appears automatically; 
see \cite{BR06}, and also \cite{BG07} for (homogeneous) orbits of Grassmannians. 
In order to extend the theory of \cite{BR06} to general holomorphic actions of complex Lie groups on complexified vector bundles, and to incorporate the unitary case to the wider setting of complex geometry, 
a new kind of generalized reproducing kernels on complex vector bundles has been introduced in \cite{BG08}, that takes into account prescribed involutions of the bundle bases. (In passing, the theory enables us to present a holomorphic geometric view of Stinespring dilations of completely positive maps.) 

In this way, we find the notion of (generalized) reproducing kernels on vector bundles in the core of that part of (infinite-dimensional) representation theory which is suitable to be treated using methods of complex geometry. So it seems to be a demanding task to study generalized reproducing kernels in themselves, mainly looking at those aspects of them which are more directly related to the aforementioned subjects. 
One of the relevant facts emerging naturally in the subject is that  
Grassmannian manifolds, over the Hilbert spaces defined by the kernels, seem to be the proper place where all the elements of the geometric theory are settled in a canonical manner; see \cite{BG07}. 
Since Grassmannians arise naturally in geometry as a universal notion,  we wonder if they can be also considered as the universal model for the geometrical implications of reproducing kernels. 
This question is further supported by the feeling that ---loosely speaking--- reproducing kernels are very much alike (so that, in particular for the classical ones, their mutual differences are mainly confined to function theory). 

In order to make this idea precise, we continue here our earlier investigation on generalized reproducing kernels 
by organizing them as a category and looking for the corresponding universal objects.  
Thus the correspondence between reproducing $(-*)$-kernels (see the definition below) 
and the associated Hilbert spaces will be looked at as a functor. 
By following this categorial approach we find that there exist 
reproducing $(-*)$-kernels enjoying universality properties 
with respect to the operation of pull-back. 
In particular it turns out that all of the reproducing kernels 
(in the classical sense, that is, for scalar valued functions) 
arise as pull-backs of a universal reproducing kernel 
that lives on the tautological bundle over the Grassmann manifold 
of the Hilbert space $\ell^2({\mathbb N})$
(see Theorem~\ref{6.1} and the comment preceding it). 

In a purely mathematical direction, investigations on operations on reproducing kernels have been 
previously carried out, with motivation coming from 
complex analysis, function theory, learning theory, and other areas; 
some recent references are \cite{AL} \cite{Kr08a}, \cite{Kr08b}, and \cite{CVTU08}. 
 The theory of kernels in the scalar case was systematically developed in \cite{Ar}.
An extension to vector valued functions has been worked out in \cite{Ku}, 
as well as a systematic account of this theory is included in \cite{Ne00}. 
Reproducing kernels for Hilbert spaces of sections generalize the above classes. 
There are already present in \cite{Ko}, but the first framework where to consider kernels for Hilbert spaces of sections of (finite-dimensional) complex bundles seems to have been suggested in~\cite{BH}. 
The corresponding extension
to the setting of vector bundles with infinite-dimensional fibers 
of infinite-dimensional manifolds is done in \cite{BR06}. The article \cite{BG08} extends in turn the results of \cite{BR06} to reproducing kernels associated with involutive diffeomorphisms in the bundle base spaces.

Reproducing kernels on line bundles over finite-dimensional complex manifolds 
have been on the other hand considered in quantum mechanics as well, related to quantization of classical states; 
see \cite{Od88}, \cite{Od92} and \cite{MP97}, as well as some references therein. 
Roughly speaking, in the model proposed in \cite{Od88} and \cite{Od92} 
a mechanical system is identified with a triple $(Z,\Hc, \zeta\colon Z\to\C\ph(\Hc))$ where $Z$ is a complex manifold seen as the phase space of classical states of the system, $\Hc$ is a Hilbert space and $\C\ph(\Hc)$ is its corresponding complex projective space, which is to play the role of the phase space of pure quantum states. 
Then the (geometric) quantization of the classical phase space $Z$ is supplied by the mapping $\zeta$. 
It turns out that the transition probability amplitudes linked to the phase space $Z$ 
can be formalized in a way that gives rise to a reproducing kernel $K$ defined on 
a complex line bundle with base~$Z$. 
Then the quantization map $\zeta=\zeta_K$ is constructed out of the kernel $K$, 
through the  reproducing kernel Hilbert space $\Hc_K$ associated with~$K$, 
see \cite{Od92} for details. 
The mapping $\zeta$ is very important in this approach since under certain conditions 
it replaces the calculation of the Feynman integral which expresses the transition amplitudes between states. 
Moreover, it is possible to recover the reproducing kernel defining the transition probabilities directly from $\zeta$.

In fact, the relationship between $K$ and $\zeta$ admits an interpretation in a categorial framework such as it has been developed in the paper~\cite{Od92}, with a view toward establishing a canonical relation between 
the classical observables and the quantum ones (see also \cite{Od88}). In this setting, natural ways of viewing states of the matter take the form of three different categories (in two of them the objects are complex line bundles, with a distinguished kernel $K$ in one case and a distinguished Hilbert space $\Hc$ in the other; in the third one the objects are the mappings $\zeta$) and it is possible to show the mutual functorial equivalence between them, see \cite{Od92}, p. 387. Pull-backs of the tautological universal bundle with base $\C\ph(\Hc)$, as well as of its canonical kernel, are used as part of the arguments to prove such equivalences.
 
Apparently, the above physical interpretation and treatment of reproducing kernels has remained unknown for the pure mathematical line of research referred to in former lines. Indeed, we have been aware of the papers \cite{Od88} and \cite{Od92} only after having written the most substantial part of the present article. Since we consider here vector bundles with infinite-dimensional fibers of infinite-dimensional manifolds, our work can be seen partly as a (non-trivial) extension to that setting of the categorial results of \cite{Od92}. However, it must be noticed that, whereas one of the aims of \cite{Od92} is to establish equivalence of categories thought of equivalent ways to viewing states of matter, we are rather interested in pointing out the properties of (generalized) reproducing kernels in themselves, so we present them as {\it objects} of a category and look for their canonical description in terms of their Grassmannian universal  representatives.    

The general plan of the article can be seen prior to this introduction. 
In Section~\ref{sect1} we give some elements for a categorial theory of notions and results 
related to the main items of papers \cite{BR06}, and  \cite{BG08}. 
Consequently in Section~\ref{sect2} subcategories of objects of a {\it positive} character are singled out, 
laying particular emphasis on the concept of reproducing $(-*)$-kernels. 
In this setting, completely positive mappings on Hilbert spaces appear as {\it objects} of a suitable category, 
instead of being considered (as usually in the literature) as morphisms. 
Section~\ref{sect3} collects significant results involving pull-backs of vector bundles and kernels, 
in the setting dealt with in the paper. 
Several items of previous sections are established in Section~\ref{sect4} for Grassmann manifolds, 
as a preparation for the main section of the article, namely Section~\ref{sect5}. 
Here, we establish the universality theorems making (generalized) reproducing kernels $K$ associated with a given (like-Hermitian) vector bundle 
$\Pi\colon D\to Z$
to appear as pull-backs of canonical ones living on Grassmannians. The essential tool to do that is a mapping $\zeta_K\colon Z\to\Gr(\Hc^K)$ like that one of above, but this time taking values in all of the Grassmann manifold $\Gr(\Hc^K)$ formed by all closed subspaces of $\Hc^K$. Here $\Hc^K$ is the reproducing kernel Hilbert space defined by $K$. 
Such a mapping  $\zeta_K$ is of course an extension of the corresponding mappings considered in \cite{Od92}, and indeed was suggested by an intermediate extension introduced in \cite{MP97}, in a finite-dimensional context. 
By mimicking names originated from physical considerations, we are tempted to call the mapping $\zeta_K$ the {\it quantization} map of the bundle $\Pi$, and every element $\zeta_K(s)$, $s\in Z$, a {\it coherent state} for $\Pi$.  
 
Finally, in Section~\ref{sect6} we give some applications or examples. 
We find particularly interesting the fact that {\it any} completely positive mapping taking values in the algebra of bounded operators on a Hilbert space, in analogy with what happens for kernels, can be obtained as the pull-back of a canonical (and universal) completely positive map associated with the tautological bundle of a Grassmann manifold.

\section{A categorial  framework for like-Hermitian structures}\label{sect1}

\subsection{Like-Hermitian structures}\label{subsect1.1}

We briefly review here the like-Hermitian vector bundles introduced in \cite{BG08} 
in order to study geometric models for representations of Banach-Lie groups and $C^*$-algebras. 

\begin{definition}\label{like}
\normalfont
Let $Z$ be a real Banach manifold with an involutive diffeomorphism
$z\mapsto z^{-*}$, $Z\to Z$,
that is, $(z^{-*})^{-*}=z$ for all $z\in Z$.
A {\it like-Hermitian structure} on a smooth vector bundle $\Pi\colon D\to Z$ 
(whose typical fiber is a complex Banach space $\Ec$) is a family
$\{(\cdot\mid\cdot)_{z,z^{-*}}\}_{z\in Z}$
with the following properties:
\begin{itemize}
\item[{\rm(a)}]
For every $z\in Z$,
$(\cdot\mid\cdot)_{z,z^{-*}}\colon D_z\times D_{z^{-*}}\to{\mathbb C}$
is  a sesquilinear strong duality pairing.
\item[{\rm(b)}]
For all $z\in Z$, $\xi\in D_z$, and $\eta\in D_{z^{-*}}$
we have $\overline{(\xi\mid\eta)}_{z,z^{-*}}=(\eta\mid\xi)_{z^{-*},z}$.
\item[{\rm(c)}]
If $V$ is an arbitrary open subset of $Z$,
and $\Psi_V\colon V\times\Ec\to\Pi^{-1}(V)$
and $\Psi_{V^{-*}}\colon V^{-*}\times\Ec\to\Pi^{-1}(V^{-*})$
are trivializations of the vector bundle $\Pi$
over $V$ and $V^{-*}$ ($:=\{z^{-*}\mid z\in V\}$),
respectively,
then the function
$(z,x,y)\mapsto(\Psi_V(z,x)\mid\Psi_{V^{-*}}(z^{-*},y))_{z,z^{-*}}$,
$V\times \Ec\times\Ec\to{\mathbb C}$
is smooth.
\end{itemize}
\qed
\end{definition}

\begin{remark}\label{sesqui}
\normalfont
Condition~(a) in Definition~\ref{like} means that
$(\cdot\mid\cdot)_{z,z^{-*}}\colon D_z\times D_{z^{-*}}\to{\mathbb C}$
is continuous, is linear in the first variable
and antilinear in the second variable, and both the mappings
$$
\xi\mapsto(\xi\mid\cdot)_{z,z^{-*}},\quad D_z\to(\overline{D}_{z^{-*}})^*,
\quad\text{ and }\quad
\eta\mapsto(\cdot\mid\eta)_{z,z^{-*}},\quad \overline{D}_{z^{-*}}\to D_z^*,
$$
are
(not necessarily isometric) isomorphisms of complex Banach spaces.
Here we denote, for any complex Banach space $\Zc$,
by $\Zc^*$ its dual (complex) Banach space
and by $\overline{\Zc}$ the complex-conjugate Banach space.
That is, the {\it real} Banach spaces underlying
$\Zc$ and $\overline{\Zc}$ coincide, and
for any $z$ in the corresponding real Banach space
and $\lambda\in{\mathbb C}$
we have
$\lambda\cdot z\;\; \text{(in $\overline{\Zc}$)}
=\overline{\lambda}\cdot z\;\; \text{(in ${\Zc}$)}$.
\qed
\end{remark}

\begin{remark}\label{sesqui1}
\normalfont
For later use we now record the following fact:
Let $\Xc$ and $\Yc$ be complex  Banach spaces with a sesquilinear strong duality pairing 
$(\cdot\mid\cdot)\colon\Xc\times\Yc\to{\mathbb C}$.
If $\Hc$ is a complex Hilbert space and
$T\colon\Hc\to\Xc$ is a bounded linear operator, 
then there exists a unique operator
$S\colon\Yc\to\Hc$ such that
\begin{equation}\label{adj}
(\forall h\in\Hc,y\in\Yc)\quad
(Th\mid y)=(h\mid Sy)_{\Hc}.
\end{equation}
Conversely,  for every
bounded linear operator $S\colon\Yc\to\Hc$ there exists
a unique bounded linear operator $T\colon\Hc\to\Xc$ satisfying~\eqref{adj},
and we denote $S^{-*}:=T$ and $T^{-*}:=S$.
\qed
\end{remark}

Like-Hermitian bundles admit a natural notion of morphism from one into another.

\begin{definition}\label{morph}
\normalfont
Let $\widetilde{\Pi}\colon \widetilde{D}\to\widetilde{Z}$ and 
$\Pi\colon D\to Z$ be 
like-Hermitian vector bundles, and assume that 
each of the manifolds $\widetilde{Z}$ and $Z$ is endowed with an involutive 
diffeomorphism denoted by $z\mapsto z^{-*}$ for both manifolds. 
A {\it morphism} (respectively, an {\it antimorphism}) 
of $\widetilde{\Pi}$ into $\Pi$ is a pair 
 $\Theta=(\delta,\zeta)$ 
such that 
$\delta\colon\widetilde{D}\to D$ and 
$\zeta\colon\widetilde{Z}\to Z$ 
are smooth mappings satisfying the following conditions: 
\begin{itemize}
\item[{\rm(i)}] 
The diagram 
$$
\begin{CD}
\widetilde{D} @>{\delta}>> D \\
@V{\widetilde{\Pi}}VV @VV{\Pi}V \\
\widetilde{Z} @>{\zeta}>> Z
\end{CD}
$$
is commutative. 
\item[{\rm(ii)}] 
The mapping 
$\delta_z:=\delta|_{\widetilde{D}_z}\colon\widetilde{D}_z\to D_{\zeta(z)}$ 
is a bounded linear operator whenever $z\in\widetilde{Z}$ 
(respectively, a bounded antilinear operator whenever $z\in\widetilde{Z}$). 
\item[{\rm(iii)}] 
For all $z\in\widetilde{Z}$ we have $\zeta(z^{-*})=\zeta(z)^{-*}$. 
\end{itemize}
Because of the assumption that the sesquilinear functionals 
$(\cdot\mid\cdot)_{z,z^{-*}}$ are strong duality pairings, 
it follows that the morphism $\Theta=(\delta,\zeta)$ is 
{\it quasi-adjointable} in the sense that for every $z\in\widetilde{Z}$ 
there exists a (unique) bounded linear operator 
$(\delta_z)^{-*} \colon D_{\zeta(z)^{-*}}\to\widetilde{D}_{z^{-*}}$ 
such that 
\begin{equation}\label{quasi-adjointable}
(\forall\xi\in\widetilde{D}_z,\eta\in D_{\zeta(z)^{-*}})\quad 
(\delta_z\xi\mid\eta)_{\zeta(z),\zeta(z)^{-*}}
=(\xi\mid(\delta_z)^{-*}\eta)_{z,z^{-*}}\ \ .
\end{equation}
 If $\Theta=(\delta,\zeta)$ is an antimorphism, 
then it is again {\it quasi-adjointable}  
since for every $z\in\widetilde{Z}$ 
there exists a (unique) continuous antilinear operator 
$(\delta_z)^{-*}\colon D_{\zeta(z)^{-*}}\to\widetilde{D}_{z^{-*}}$ 
such that 
\begin{equation}\label{quasi-adjointable_anti}
(\forall\xi\in\widetilde{D}_z,\eta\in D_{\zeta(z)^{-*}})\quad 
(\delta_z\xi\mid\eta)_{\zeta(z),\zeta(z)^{-*}}
=\overline{(\xi\mid(\delta_z)^{-*}\eta)}_{z,z^{-*}}\ \ .
\end{equation}
Now assume that $\zeta\colon\widetilde{Z}\to Z$  
is a diffeomorphism. 
We say that the morphism $\Theta=(\delta,\zeta)$ 
is {\it adjointable} if there exists 
a morphism $\Theta^{-*}=(\delta^{-*},\zeta^{-1})$ 
from $\Pi$ into $\widetilde{\Pi}$ 
such that \eqref{quasi-adjointable} is satisfied 
with $(\delta_z)^{-*}:=\delta^{-*}|_{D_{\zeta(z^{-*})}}
\colon D_{\zeta(z^{-*})}\to\widetilde{D}_{z^{-*}}$. 
If this is the case, then any morphism $\Theta^{-*}$ 
with this property is said to be {\it adjoint to} $\Theta$. 
The {\it adjointable antimorphisms} are defined in a similar manner 
and an adjoint of an antimorphism is by definition an antimorphism. 
\qed
\end{definition}

\begin{definition}\label{isometric}
\normalfont
Let $\widetilde{\Pi}\colon \widetilde{D}\to\widetilde{Z}$ and 
$\Pi\colon D\to Z$ be two 
smooth like-Hermitian vector bundles. 
Assume that $\Theta=(\delta,\zeta)$ is a morphism 
from $\widetilde{\Pi}$ into $\Pi$. 
We say that $\Theta$ is an \emph{isometry} if 
it satisfies the condition 
$(\xi\mid\eta)_{z,z^{-*}}=
(\delta(\xi)\mid\delta(\eta))_{\zeta(z),\zeta(z)^{-*}} $
whenever $z\in\widetilde{Z}$, $\xi\in\widetilde{D}_z$, 
and $\eta\in\widetilde{D}_{z^{-*}}$. 
Similarly, if $\Theta=(\delta,\zeta)$ is an antimorphism, 
then $\Theta$ is an \emph{isometry} if and only if we have 
$(\xi\mid\eta)_{z,z^{-*}}=
\overline{(\delta(\xi)\mid\delta(\eta))}_{\zeta(z),\zeta(z)^{-*}} $
whenever $z\in\widetilde{Z}$, $\xi\in\widetilde{D}_z$, 
and $\eta\in\widetilde{D}_{z^{-*}}$. 
\qed
\end{definition}

\begin{remark}\label{morph2}
\normalfont
In Definition~\ref{isometric}, 
if $\Theta=(\delta,\zeta)$ is an isometry  
and the mapping
$\delta_z:=\delta|_{\widetilde{D}_z}\colon\widetilde{D}_z\to D_{\zeta(z)}$ 
is bijective for all $z\in\widetilde{Z}$, 
then $\Theta$ is quasi-adjointable and the appropriate
condition \eqref{quasi-adjointable} or \eqref{quasi-adjointable_anti}
is satisfied  if we take 
$(\delta_z)^{-*}:=(\delta_{z^{-*}})^{-1}\colon 
D_{\zeta(z)^{-*}}\to\widetilde{D}_{z^{-*}}$
for each $z\in\widetilde{Z}$. 
This also shows that if $\zeta\colon\widetilde{Z}\to Z$ 
is a diffeomorphism and $\Theta=(\delta,\zeta)$ is an 
isometric morphism (respectively, antimorphism) 
that is fiberwise bijective
from $\widetilde{\Pi}$ to $\Pi$, then $\Theta$ is adjointable 
and its inverse $\Theta^{-1}=(\delta^{-1},\zeta^{-1})$ 
is an adjoint to~$\Theta$. 
\qed
\end{remark}

It is plainly seen that like-Hermitian vector bundles, as the objects, and morphisms in between form a category which will be denoted here by $\LH$. 
There are several important examples of the above bundles which we think is worthwhile to present under the categorial language. In the next subsection we consider like-Hermitian vector bundles acted on by Banach-Lie groups.

\subsection{Group actions on vector bundles and representations}\label{subsect1.2}

Recall that an {\it involutive} Banach-Lie group is a (real or complex)
Banach-Lie group $G$ equipped with a diffeomorphism $u\mapsto u^*$ satisfying
$(uv)^*=v^*u^*$ and $(u^*)^*=u$ for all $u,v\in G$.
In this case we denote
$$
 u^{-*}:=(u^{-1})^* \quad\  (\forall u\in G)\ \ 
\hbox{ and } \ \  G^{+}:=\{u^*u\mid u\in G\},
$$
and the elements of $G^{+}$ are called the {\it positive} elements of $G$.
If $H$ is a Banach-Lie subgroup of $G$,
then we say that $H$ is an {\it involutive} Banach-Lie subgroup if in addition $u^*\in H$
whenever $u\in H$.
If $G$ is an involutive Banach-Lie group then for every $u\in G$
we have $(u^{-1})^*=(u^*)^{-1}$ and moreover $\1^*=\1$.

\begin{definition}\label{relationship}
\normalfont
Assume that we have a complex involutive Banach-Lie group~$G$,  
a holomorphic like-Hermitian vector bundle 
$\Pi\colon D\to Z$, and two holomorphic actions $\mu$ and $\nu$  of the group $G$ on $D$ and $Z$ respectively, 
such that  
\begin{itemize}
\item[{\rm(i)}] there exists the commutative diagram
$$
\begin{CD}
G\times D @>{\mu}>> D \\
@V{\id_G\times\Pi}VV @VV{\Pi}V \\
G\times Z @>{\nu}>> Z
\end{CD}
$$
and for all $u\in G$ and $z\in Z$ the mapping 
$\mu(u,\cdot)|_{D_z}\colon D_z\to D_{\nu(u,z)}$
is a bounded linear operator,
\item[{\rm(ii)}] for every  $u\in G$ and $z\in Z$  we have
$\nu(u^{-*},z^{-*})=\nu(u,z)^{-*}$,
\item[{\rm(iii)}] for every $z\in Z$, $u\in G$; $\xi\in D_z$, $\eta\in D_{\nu(u^{-*},z^{-*})}$ we have
$$
(\mu(u^*,\eta)\mid\xi)_{z^{-*},z}=(\eta\mid\mu(u,\xi))_{\nu(u^{-*},z^{-*}),\nu(u,z)}.
$$
\end{itemize}
With the notation $(\Pi,G)$ we refer to a like-Hermitian vector bundle $\Pi$ 
which is acted on by the Banach-Lie group $G$ 
in the sense defined by the preceding properties.

Let $(\widetilde\Pi,\widetilde G)$ and $(\Pi,G)$ be two such elements. 
We say that a pair $(\Theta,\alpha)$ is a {\it morphism} from 
$(\widetilde\Pi,\widetilde G)$ into $(\Pi,G)$ if $\Theta:=(\delta, \zeta)$ is a morphism between the bundles $\widetilde\Pi$ and $\Pi$, 
$\alpha\colon\widetilde G\to G$ is a holomorphic group homomorphism which preserves involutions, and the following commutativity of diagrams holds:
\begin{itemize}  
\item[{\rm(a)}] $(\id_G\times\Pi)\circ(\alpha\times\delta)=(\alpha\times\zeta)\circ(\id_{\widetilde G}\times\widetilde\Pi)$, 
that is, 
$$\begin{CD}
{\widetilde G}\times{\widetilde D} @>{\alpha\times\delta}>> G\times D \\
@V{\id_{\widetilde G}\times{\widetilde\Pi}}VV @VV{\id_G\times\Pi}V \\
{\widetilde G}\times{\widetilde Z} @>{\alpha\times\zeta}>> G\times Z   
  \end{CD}
 $$
\item[{\rm(b)}] $\delta\circ\tilde\mu=\mu\circ(\alpha\times\delta)$ and $\zeta\circ\tilde\nu=\nu\circ(\alpha\times\zeta)$, 
that is, 
$$\begin{CD}
{\widetilde G}\times{\widetilde D} @>{\alpha\times\delta}>> G\times D \\
@V{\widetilde\mu}VV @VV{\mu}V \\
{\widetilde D} @>{\delta}>> D
  \end{CD}
\qquad\text{and}\qquad
\begin{CD}
 {\widetilde G}\times{\widetilde Z} @>{\alpha\times\zeta}>> G\times Z \\
@V{\widetilde\nu}VV @VV{\nu}V \\
{\widetilde Z} @>{\zeta}>> Z
\end{CD}
$$
\end{itemize}
It is then straightforward to check that we have got a category where pairs
$(\Pi,G)$ are the objects and pairs $(\Theta,\alpha)$ are the morphisms. 
Let $\GLH$ denote this category.
\qed
\end{definition}

Important examples of objects in $\GLH$ are suitably related to 
representations of groups acting in that category. 
Specifically, we shall make the following definition which 
claims its origins in the idea of \emph{basic mapping} that shows up in Example~2.5 
in \cite{BG08}.

\begin{definition}\label{transrel} 
\normalfont 
In the setting of Definition \ref{relationship}, let further assume that we have given 
a holomorphic $*$-representation $\pi\colon G\to\Bc(\Hc)$.

We say that the mapping $\Rc\colon D\to\Hc$ \textit{relates} $\Pi$ to $\pi$ 
if it has the following properties: 
\begin{itemize}
\item[{\rm(i)}] $\Rc$ is holomorphic; 
\item[{\rm(ii)}] for each $z\in Z$ the mapping 
$\Rc_z:=\Rc|_{D_z}\colon D_z\to\Hc$ 
is an injective bounded linear operator and 
in addition we have  
$(\xi\mid\eta)_{z,z^{-*}}=(\Rc(\xi)\mid\Rc(\eta))_{\Hc}$
whenever $\xi\in D_z$ and $\eta\in D_{z^{-*}}$; 
\item[{\rm(iii)}] for every $u\in G$ and $z\in Z$ the diagram 
\end{itemize}
$$\begin{CD}
D_z @>{\mu(u,\cdot)|_{D_z}}>> D_{\nu(u,z)} \\
@V{\Rc_z}VV @VV{\Rc_{\nu(u,z)}}V \\
\Hc @>{\pi(u)}>> \Hc 
\end{CD}
$$
is commutative. 
We alternatively call $\Rc$ the {\it transfer} mapping between $\Pi$ and $\pi$ 
(or $D$ and $\Hc$).
\qed
\end{definition}

In the case when an object $(\Pi,G)$ in the category $\GLH$ is associated with a representation $\pi$ and a 
transfer mapping $\Rc$ as in the above definition we shall refer to $(\Pi,G)$ by writing $(\Pi,G;\pi)$. 
Given two of these objects  
$(\widetilde\Pi,\widetilde G;\tilde\pi)$, $(\Pi,G;\pi)$ 
we say that a triple $(\Theta,\alpha,L)$ is a {\it morphism} from 
$(\widetilde\Pi,\widetilde G;\tilde\pi)$ into $(\Pi,G;\pi)$ 
whenever  $(\Theta,\alpha)$ is a morphism between 
$(\widetilde\Pi,\widetilde G)$ and $(\Pi,G)$ in the sense of Definition~\ref{relationship}, 
and $L\colon\widetilde\Hc\to\Hc$ is a bounded linear mapping such that:
\begin{itemize}
\item[{\rm(c)}]  For all $u\in\widetilde G$, $\pi(\alpha(u))\circ L=L\circ\tilde\pi(u)$.
\item[{\rm(d)}] $\Rc\circ\delta=L\circ\widetilde\Rc$.
\end{itemize}
Then triples $(\Pi,G;\pi)$ are the objects of a category, which we choose to call $\Rg$, 
whose morphisms are the triples $(\Theta,\alpha,L)$.
Of course properties (c) and (d) of $L$ are to be added to previous (a) and (b) for $(\Theta,\alpha)$.

%\medskip
There is an important subcategory of $\GLH$ (and of $\Rg$) 
where the base spaces of the corresponding vector bundles are homogeneous manifolds. 
These bundles have been considered in \cite{BG08}, in connection with 
holomorphic realizations of representations. 
We recall their definition in the next subsection.

\subsection{Homogeneous like-Hermitian vector bundles}\label{subsect1.3}

Assume that we have the following data:
\begin{itemize}
\item[$\bullet$] $G_A$ is an involutive real
(respectively, complex) Banach-Lie group and $G_B$
is an involutive real (respectively, complex) Banach-Lie subgroup of $G_A$.
\item[$\bullet$] For $X=A$ or $X=B$, assume $\Hc_X$ is a complex Hilbert space
with $\Hc_B$ closed subspace in $\Hc_A$, and
$\pi_X\colon G_X\to \Bc(\Hc_X)$ is a uniformly continuous
(respectively, holomorphic)
$*$-representation such that
$\pi_B(u)=\pi_A(u)|_{\Hc_B}$ for all $u\in G_B$.
By $*$-representation we mean that $\pi_A(u^*)=\pi_A(u)^*$
for all $u\in G_A$.
\item[$\bullet$] We denote by $P\colon\Hc_A\to\Hc_B$
the orthogonal projection.
\end{itemize}
Define an equivalence relation on
$G_A\times\Hc_B$ by
$(u,f)\sim(u',f')$ whenever there exists $w\in G_B$ such that $u'=uw$ and  $f'=\pi_B(w^{-1})f$.
For every pair $(u,f)\in G_A\times\Hc_B$ its equivalence class is denoted by
$[(u,f)]$ and the set of all equivalence classes is denoted by $D=G_A\times_{G_B}\Hc_B$.
Clearly, there exists a natural onto map
$$
\Pi\colon\,[(u,f)]\mapsto s:=u\ G_B,\quad D\to G_A/G_B.
$$
For $s\in G_A/G_B$, let $D_s:=\Pi^{-1}(s)$ denote the {\it fiber} on $s$.
Note that $(u,f)\sim(u',f')$ implies that $\pi_A(u)f=\pi_A(u')f'$
so that the correspondence
$[(u,f)]\mapsto\pi_A(u)f$, $D_s\to\pi_A(u)\Hc_B$,
gives rise to a complex linear structure on $D_s$, and a Hilbertian norm on $D_s$
defined by
$\Vert[(u,f)]\Vert_{D_s}:=\Vert\pi_A(u)f\Vert_{\Hc_A}$
where $[(u,f)]\in D_s$. 
Moreover, the formula 
\begin{equation}\label{strong}
\bigl([(u,f)]\mid[(u^{-*},g)]\bigr)_{s,s^{-*}}
=\bigl(\pi_A(u)f\mid\pi_A(u^{-*})g\bigr)_{\Hc_A}
=\bigl(\pi_A(u^{-1})\pi_A(u)f\mid g\bigr)_{\Hc_A}
=\bigl(f\mid g\bigr)_{\Hc_B},
\end{equation}
for $[(u,f)]\in D_s$ and [$(u^{-*},g)]\in D_{s^{-*}}$, 
defines a like-Hermitian structure 
$\bigl(\cdot\mid\cdot\bigr)_{s,s^{-*}}$
on the vector bundle~$\Pi$. 
See more details in~\cite{BG08}. 

So, in the case where $G_A$, $G_B$ are complex and $\pi_A$, $\pi_B$ are holomorphic the bundle $\Pi$ is an example of 
Definition \ref{relationship} with respect to the representation 
$\pi_A\colon G_A\to\Bc(\Hc_A)$ and the transfer mapping given by
$$
\Rc\colon\,[(u,f)]\mapsto\pi_A(u)f,\quad D\to\Hc_A.
$$
As in \cite{BG08}, we shall say that $\Pi\colon G_A\times_{G_B}\Hc_B\to G_A/G_B$ is the 
{\it homogeneous like-Hermitian vector bundle}
associated with the data~$(\pi_A,\pi_B,P)$.

We can consider the data $(\pi_A,\pi_B,P)$ as objects of a category 
$\HLH$ where the morphisms are given by pairs $(\alpha,L)$ from
$(\tilde\pi_A,\tilde\pi_B,\tilde P)$ into $(\pi_A,\pi_B,P)$ such that:
\begin{itemize}
\item[(i)] $\alpha\colon\widetilde G_A\rightarrow G_A$ is a continuous 
(or holomorphic when $G_A$ and $G_B$ are complex) group homomorphism preserving involutions, with $\alpha(\widetilde G_B)\subseteq G_B$.
\item[(ii)] $L\colon\widetilde\Hc_A\rightarrow\Hc_A$ is a bounded linear mapping such that $L\circ\tilde\pi_A(u)=\pi_A(\alpha(u))\circ L$ 
for every $u\in\widetilde G_A$ and 
$P\circ L\mid_{\widetilde\Hc_B}=L\circ\widetilde P$ or, equivalently, 
$L(\widetilde\Hc_B)\subseteq\Hc_B$. 
\end{itemize}
We observe that the conditions on $(\alpha,L)$ allow us in particular to get the well-defined mapping 
$$
[\alpha\times L]\colon[(u,f)]\mapsto[(\alpha(u),L(f))],\  
\widetilde G_A\times_{\widetilde G_B}\widetilde\Hc_B
\rightarrow G_A\times_{G_B}\Hc_B. 
$$
The morphism from $\widetilde G_A\times_{\widetilde G_B}\widetilde\Hc_B
\rightarrow \widetilde G_A/\widetilde G_B$ into $ G_A\times_{G_B}\Hc_B
\rightarrow G_A/ G_B$ is $\Theta=(\delta,\zeta)$ where 
$\delta=[\alpha\times L]$ and 
$\zeta=\alpha_q\colon u\widetilde G_B\mapsto\alpha(u)G_B,\ \widetilde G_A/\widetilde G_B\rightarrow G_A/G_B$. Moreover, it is an exercise of some patience to check that, for the transfer mapping defined just above and the natural multiplications, the morphism properties (a), (b), (c), (d) given in the definition of the category $\Rg$ hold for $\Theta=([\alpha\times L],\alpha_q)$.  

In the following subsection we consider a remarkable class of objects in the category $\HLH$.

\subsection{Vector bundles arising from completely positive maps on $C^*$-algebras}\label{subsect1.4}

In this subsection we deal with completely positive maps on $C^*$-algebras valued in algebras of 
bounded operators on Hilbert spaces. 
Let us briefly review the Stinespring construction of dilations of completely positive maps. 
For details and references, see for instance \cite{BG08}.

For every complex vector space $X$ let $M_n(X)$ denote the space formed by all matrices 
$n\times n$ with entries in $X$. 
Then, for every linear map $\Phi\colon X\to Y$ between two vector spaces $X$ and $Y$ and 
every integer $n\ge1$, put 
$\Phi_n=\Phi\otimes\id_{M_n({\mathbb C})}\colon M_n(X)\to M_n(Y)$, 
that is, $\Phi_n((x_{ij})_{1\le i,j\le n})=(\Phi(x_{ij}))_{1\le i,j\le n}$ 
for every matrix $(x_{ij})_{1\le i,j\le n}\in M_n(X)$.

Let $A$ be a unital $C^*$-algebra and let $\Hc_0$ be a complex Hilbert space. A linear map 
$\Phi\colon A\to\Bc(\Hc_0)$ 
is said to be a {\it completely positive} map if 
for every integer $n\ge1$ the map $\Phi_n\colon M_n(A)\to M_n(\Bc(\Hc_0))$ is positive 
in the sense that it takes positive 
elements in the $C^*$-algebra 
$M_n(A)$ to positive ones in $M_n(\Bc(\Hc_0))$. 
If moreover $\Phi(\1)=\1$ then we say that $\Phi$ is unital and in this case 
we have $\Vert\Phi_n\Vert=1$ for every $n\ge1$.
Define a nonnegative sesquilinear form on $A\otimes\Hc_0$ by the formula 
$$
\Bigl(\sum_{j=1}^nb_j\otimes\eta_j\mid\sum_{i=1}^n a_i\otimes\xi_i\Bigr)
=\sum_{i,j=1}^n(\Phi(a_i^*b_j)\eta_j\mid\xi_i)
$$
for $a_1,\dots,a_n,b_1,\dots,b_n\in A$, 
$\xi_1,\dots,\xi_n,\eta_1,\dots,\eta_n\in\Hc_0$ 
and $n\ge1$. Set $N_A=\{x\in A\otimes\Hc_0\mid (x\mid x)=0\}$ 
and denote by $\Kc_0$ the Hilbert space obtained as 
the completion of $(A\otimes\Hc_0)/N_A$ 
with respect to the scalar product defined by $(\cdot\mid\cdot)$ 
on this quotient space. 
One can define a representation $\widetilde{\pi}$ of $A$ 
by linear maps 
on $A\otimes\Hc_0$ given by 
$$
(\forall a,b\in A)(\forall \eta\in\Hc_0)\qquad
\widetilde{\pi}(a)(b\otimes\eta)=ab\otimes\eta.
$$
Then every linear map 
$\widetilde{\pi}(a)\colon A\otimes\Hc_0\to A\otimes\Hc_0$ 
induces a continuous map $(A\otimes\Hc_0)/N_A\to(A\otimes\Hc_0)/N_A$, 
whose extension by continuity will be denoted by $\pi_\Phi(a)\in\Bc(\Kc_0)$. 
In this way we obtain a unital $*$-representation 
$\pi_\Phi\colon A\to\Bc(\Kc_0)$
which is called the {\it Stinespring representation associated with $\Phi$}. 

Additionally, denote by $V\colon\Hc_0\to\Kc_0$ 
the bounded linear map obtained as 
the composition 
$$
V\colon\Hc_0 \to A\otimes\Hc_0\to(A\otimes\Hc_0)/N_A\hookrightarrow\Kc_0,
$$
where the first map is defined by $\Hc_0\ni h\mapsto\1\otimes h\in A\otimes\Hc_0$ 
and the second map is the natural quotient map. 
Then $V\colon\Hc_0\to\Kc_0$ is an isometry satisfying 
$\Phi(a)=V^*\pi_\Phi(a)V$ for all $a\in A$. In this sense $\pi_\Phi$ is called a {\it Stinespring dilation} of $\Phi$. 

Note that if $\dim\Hc_0=1$, that is, 
$\Phi$ is a state of $A$, 
the Stinespring representation associated with $\Phi$ coincides with 
the corresponding Gelfand-Naimark-Segal (GNS) representation. 

Particular cases of completely positive maps are the {\it conditional expectations}:
Let $\1\in B\subseteq A$ be two $C^*$-algebras. 
A conditional expectation $E\colon A\to B$ is a linear map satisfying $E^2=E$, $\Vert E\Vert=1$ and $\Ran E=B$. 
Then it follows by the theorem of Tomiyama that $E(\1)=\1$ and 
for every $a\in A$, $b_1,b_2\in B$ we have $E(a^*)=E(a)^*$, $0\le E(a)^*E(a)\le E(a^*a)$, and 
$E(b_1ab_2)=b_1E(a)b_2$. 
We have the following result 
(Lemma~6.7 in \cite{BG08}).

\begin{lemma}\label{two_squares}
Let $B\subseteq A$ be two unital $C^*$-algebras  
with a conditional expectation $E\colon A\to B$ 
and a unital completely positive map $\Phi\colon A\to\Bc(\Hc_0)$
satisfying $\Phi\circ E=\Phi$, where $\Hc_0$ is a complex Hilbert space.

Now let $\pi_A\colon A\to\Bc(\Hc_A)$ and $\pi_B\colon B\to\Bc(\Hc_B)$ 
be the Stinespring representations associated with 
the unital completely positive maps 
$\Phi$ and $\Phi|_B$, respectively. 
Then $\Hc_B\subseteq\Hc_A$, and for every
$h_0\in\Hc_0$ and 
$b\in B$ we have the commutative diagrams 
$$
\begin{CD}
A @>{\iota_{h_0}}>> \Hc_A @>{\pi_A(b)}>> \Hc_A \\
@V{E}VV      @V{P}VV              @VV{P}V \\
B @>{\iota_{h_0}}>> \Hc_B @>{\pi_B(b)}>> \Hc_B
\end{CD}
$$
where $P\colon\Hc_A\to\Hc_B$ is the orthogonal projection, and  
$\iota_{h_0}\colon A\to\Hc_A$ is the map induced by $a\mapsto a\otimes h_0$. 
\end{lemma}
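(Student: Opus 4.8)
The plan is to work directly with the Stinespring construction recalled above, computing everything first on the dense subspaces coming from $A\otimes\Hc_0$ and $B\otimes\Hc_0$ and then extending by continuity. The statement splits into three independent tasks: the inclusion $\Hc_B\subseteq\Hc_A$, the commutativity of the left square, and the commutativity of the right square.

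First I would establish $\Hc_B\subseteq\Hc_A$. Since $B\subseteq A$, the natural map $B\otimes\Hc_0\hookrightarrow A\otimes\Hc_0$ is defined, and for $b,b'\in B$ and $\eta,\xi\in\Hc_0$ one has $b'^*b\in B$, so that $(\Phi|_B(b'^*b)\eta\mid\xi)=(\Phi(b'^*b)\eta\mid\xi)$; hence the sesquilinear form on $B\otimes\Hc_0$ defining $\Hc_B$ is exactly the restriction of the form defining $\Hc_A$. Consequently $N_B=N_A\cap(B\otimes\Hc_0)$, the induced map $(B\otimes\Hc_0)/N_B\to(A\otimes\Hc_0)/N_A$ is isometric, and passing to completions identifies $\Hc_B$ with a closed subspace of $\Hc_A$. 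Under this identification $\pi_A(b)|_{\Hc_B}=\pi_B(b)$ for $b\in B$, since $\pi_A(b)[b'\otimes\xi]=[bb'\otimes\xi]$ with $bb'\in B$.

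For the left-hand square I would show $P\circ\iota_{h_0}=\iota_{h_0}\circ E$, i.e.\ $P[a\otimes h_0]=[E(a)\otimes h_0]$ for every $a\in A$. As $E(a)\in B$, the right-hand side lies in $\Hc_B$, so it suffices to check that $[a\otimes h_0]-[E(a)\otimes h_0]$ is orthogonal to every $[b\otimes\eta]$ with $b\in B$, $\eta\in\Hc_0$. Using the defining formula of the form, the identity $E(a)^*=E(a^*)$, the module property $E(a^*b)=E(a^*)\,b$ of the conditional expectation, and the hypothesis $\Phi\circ E=\Phi$, one computes
$$(b\otimes\eta\mid E(a)\otimes h_0)=(\Phi(E(a)^*b)\eta\mid h_0)=(\Phi(E(a^*b))\eta\mid h_0)=(\Phi(a^*b)\eta\mid h_0)=(b\otimes\eta\mid a\otimes h_0),$$
which gives the required orthogonality and hence $P[a\otimes h_0]=[E(a)\otimes h_0]$.

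For the right-hand square I would prove $P\circ\pi_A(b)=\pi_B(b)\circ P$ for $b\in B$ by showing that both $\Hc_B$ and $\Hc_B^\perp$ are invariant under $\pi_A(b)$. Invariance of $\Hc_B$ is immediate from $\pi_A(b)[b'\otimes\xi]=[bb'\otimes\xi]$. For $\Hc_B^\perp$, if $v\in\Hc_B^\perp$ and $w\in\Hc_B$ then, since $\pi_A$ is a $*$-representation and $b^*\in B$, we get $(\pi_A(b)v\mid w)=(v\mid\pi_A(b^*)w)=0$ because $\pi_A(b^*)w\in\Hc_B$. Thus $\pi_A(b)$ commutes with $P$, and combining $P\pi_A(b)=\pi_A(b)P$ with $\pi_A(b)|_{\Hc_B}=\pi_B(b)$ on the range of $P$ yields $P\pi_A(b)=\pi_B(b)P$. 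The routine parts are the continuity/extension arguments and the identification of $\Hc_B$ inside $\Hc_A$; the one genuinely load-bearing step is the left square, where the hypothesis $\Phi\circ E=\Phi$ enters essentially together with the $B$-bimodule property of $E$. The main thing to watch is consistent bookkeeping of the sesquilinear convention, so that the adjoint relation $\pi_A(b)^*=\pi_A(b^*)$ and the placement of $a^*b$ versus $b^*a$ are handled correctly.
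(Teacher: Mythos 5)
Your proof is correct, and it matches the intended mechanism: the paper itself only cites this as Lemma~6.7 of \cite{BG08} without reproving it, but Remark~\ref{two_sq} records precisely the formula $P\bigl((\sum a_i\otimes y_i)+N_A\bigr)=\sum(E(a_i)\otimes y_i)+N_B$, which is exactly what your left-square orthogonality computation (using $E(a^*)b=E(a^*b)$ and $\Phi\circ E=\Phi$) establishes. The remaining steps --- identifying $\Hc_B$ inside $\Hc_A$ via $N_B=N_A\cap(B\otimes\Hc_0)$, and getting the right square from invariance of $\Hc_B$ and $\Hc_B^\perp$ under $\pi_A(b)$ --- are the standard arguments and are carried out correctly.
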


\begin{remark}\label{two_sq} 
\normalfont
The projection $P$ in Lemma~\ref{two_squares} 
is indeed obtained from $E$ as  the limit operator of expressions of the type $P((\sum a_i\otimes y_i) +N_A):=\sum (E(a_i)\otimes y_i) +N_B$, for every $\sum a_i\otimes y_i\in A\otimes\Hc_A$.
\qed
\end{remark}

Let $\G_X$ be the group of invertible elements of $X$ for $X=A$ or $X=B$. 
Then $\G_X$ has a natural structure of involutive complex Banach-Lie group defined by the involution of $X$. 
Moreover, $\G_B$ is a complex Banach-Lie subgroup of $\G_A$. We call the triple $(\pi_A|_{\G_A},\pi_B|_{\G_B},P)$ 
the {\it Stinespring data} associated with $E$ and $\Phi$.  It is clear that in this way we have obtained an object of 
the category $\HLH$.
In fact, they form a subcategory of $\HLH$ together with the morphisms defined as follows.

Let $(\pi_{\tilde A},\pi_{\tilde B},\tilde P)$ and $(\pi_A,\pi_B,P)$ be two Stinespring data. A morphism from 
$(\pi_{\tilde A},\pi_{\tilde B},\tilde P)$ into $(\pi_A,\pi_B,P)$ is any pair $(\alpha, T)$ such that 
$\alpha\colon\tilde A\rightarrow A$ 
is a unital $*$-homomorphism and $T\colon \tilde\Hc_0\rightarrow\Hc_0$ is a bounded linear mapping satisfying  
\begin{itemize}
\item[(a)] $\alpha\circ\widetilde E\mid_{\widetilde B}
=E\circ\alpha\mid_{\widetilde B}$,
\item[(b)] $\Phi(\alpha(b))\circ T=T\circ\widetilde\Phi(b)$ for every $b\in\tilde B$,
\end{itemize}
where the symbols involved in the above equalities have an obvious meaning.
This category will be denoted as $\SLH$. 
We rewrite the objects 
$(\pi_A,\pi_B,P)$ as $(A,B,E;\Phi)$.

To conclude this section, we remark that the categories introduced above are related through the following  functors:
\begin{enumerate}
\item The functor $\GLH\rightarrow\LH$ just given by the inclusion.
\item The functor $\HLH\rightarrow\Rg$ defined by
$$
(\pi_A,\pi_B,P)\mapsto(\Pi,\pi;\Rc)\quad
\hbox{ and }\quad (\alpha,L)\mapsto(\Theta,\alpha,L)
$$
where for a data $\pi_A\colon G_A\to\Bc(\Hc_A)$, 
$\pi_B\colon G_B\to\Bc(\Hc_B)$, $P\colon\Hc_A\to \Hc_B$, we take
$$ 
\Pi\colon[(u,f)]\mapsto u\G_B,\ G_A\times_{\G_B}\Hc_B\rightarrow G_A/G_B;\ \ 
\pi=\pi_A;  \ \Rc([(u,f)])=\pi_A(u)f,
$$
and for a morphism $(\alpha,L)$ in $\HLH$ we take 
$\Theta=([\alpha\times L],\alpha_q)$. 
\end{enumerate}
The functor $\HLH\rightarrow\Rg$ entails a deep structural relationship between the corresponding categories, 
as it will be shown in Section~\ref{sect3}, see Theorem~\ref{compare}. 

%\medskip
In an analogue way to what has been done before, one could be tempted to affirm that the correspondence 
$(A,B,E;\Phi)\mapsto(\pi_A,\pi_B;P)$ defined by the Stinespring construction 
(described prior to Lemma~\ref{two_squares}) gives rise to a functor $\SLH\to\HLH$, 
so that a morphism $(\alpha,T)$ in $\SLH$ would be sent into a (possible) morphism in $\HLH$ 
to be induced by the mapping
\begin{equation}\label{tensor}
\alpha\otimes T\colon\sum_j a_j\otimes h_j\mapsto\sum_j\alpha(a_j)\otimes T(h_j),
\ \widetilde A\otimes\widetilde\Hc_0\rightarrow A\otimes\Hc_0.
\end{equation}  
Nevertheless, the extension of $\alpha\otimes T$ to $\Hc_A$ need not be well defined. 
An extra condition involving positivity on $(\alpha,T)$, 
which is sufficient for the purpose of such an extension and 
then for such a functor $\SLH\to\HLH$, is proposed in the following section.

\section{Positivity. The category of reproducing $(-*)$-kernels}\label{sect2}

There are elements making up the objects of the different categories of like-Hermitian vector bundles which enjoy a neat character of positivity. 
So in the category $\Rg$, where a transfer mapping $\Rc$ is positive (or positive-definite) 
in the sense that for all $n\ge1$ and every $s_j\in Z$, $\xi_j\in D_{s_j}$ 
($j=1,\dots, n$),
$$
\sum_{j,l=1}^n(\Rc(\xi_j)\mid\Rc(\xi_l))_{\Hc}
=(\sum_{j=1}^n\Rc(\xi_j)\mid\sum_{l=1}^n\Rc(\xi_l))
\ge0.
$$
Similarly, a $*$-representation $\pi_A$ taking part in an object of the category 
$\HLH$ satisfies for all $n\ge1$ and every $u_j\in G_A$, $h_j\in\Hc_A$ 
($j=1,\dots, n$),
$$
\sum_{j,l=1}^n(\pi_A(u_j)h_j\mid\pi_A(u_l)h_l)_{\Hc_A}
=(\sum_{j=1}^n\pi_A(u_j)h_j\mid\sum_{l=1}^n\pi_A(u_l)h_l)_{\Hc_A}
\ge0.
$$
Also, as regards the category $\SLH$, 
a (unital, in our case) completely positive mapping $\Phi\colon A\to\Bc(\Hc_0)$ 
satisfies by definition for all $n\ge1$, 
every positive matrix $(a_{j,l})_{j,l}\subset M_n(A)$ and $h_1,\dots,h_n\in\Hc_0$,
$$
\sum_{j,l=1}^n(\Phi(a_{j,l})h_j\mid h_l)_{\Hc_0}\ge0.
$$
As we are going to see below, the above self-imposed, by definition,  
properties  are particular cases of a general notion of positivity linked to like-Hermitian vector bundles.

\begin{definition}\label{kernel}
\normalfont 
Let $\Pi\colon D\to Z$ be a like-Hermitian bundle. 
A {\it reproducing $(-*)$-kernel} on $\Pi$ is a section  
$K\in\Gamma(Z\times Z,\Hom(p_2^*\Pi,p_1^*\Pi))$
(whence $K(s,t)\colon D_t\to D_s$ for all $s,t\in Z$) 
which is ($-*$)-positive definite in the following sense: 
For every $n\ge1$ and $t_j\in Z$,
$\eta_j\in D_{t_j^{-*}}$ ($j= 1,\dots, n$),
$$
\sum_{j,l=1}^n
\bigl(K(t_l,t_j^{-*})\eta_j\mid \eta_l\bigr)_{t_l,t_l^{-*}}\ge0.
$$ 
Here $p_1,p_2\colon Z\times Z\to Z$ are the natural projection mappings. 
If in addition $\Pi\colon D\to Z$ is 
a holomorphic like-Hermitian vector bundle, $K$ is continuous and 
$K(\cdot,s)\xi\in\Oc (Z,D)$ for all $\xi\in D_s$ and 
$s\in Z$, 
then we say that $K$ is a {\it holomorphic} reproducing $(-*)$-kernel. 
We shall say that a reproducing $(-*)$-kernel $K$ on a holomorphic like-Hermitian vector bundle 
$\Pi\colon D\to Z$
is {\it holomorphic of the second kind} 
if it is continuous and 
the mapping $\xi\mapsto K(t,\Pi(\xi))\xi$, $D\to D$ is holomorphic for every $t\in Z$. Examples of kernels enjoying both types of holomorphy are the kernels $K^\pi$, defined through representations $\pi$, associated to the categories $\HLH$ and $\Rep$; see Remark \ref{kapi} (2) below.
\qed
\end{definition}

\begin{remark}\label{obs}
\normalfont
Let $K(s,t)^{-*}\colon D_{s^{-*}}\to D_{t^{-*}}$ be the 
\lq\lq adjoint" 
operator of the mapping $K(s,t)$ with respect to the family $(\cdot\mid\cdot)_{r,r^{-*}}$, $r\in Z$; i.e.,
$(K(s,t)^{-*}\xi^{-*}\mid\eta)_{t^{-*},t}:=(\xi^{-*}\mid K(s,t)\eta)_{s^{-*},s}$
for every $s,t\in Z$ and $\xi^{-*}\in D_{s^{-*}}$, $\eta\in D_t$.

From the definition of reproducing kernel we have that $K(s,s^{-*})\ge0$ where we understand for this that 
$\bigl(K(s,s^{-*})\xi\mid\xi\bigr)_{s,s^{-*}}\ge0$ 
for all $\xi\in D_{s^{-*}}$. Moreover, by applying the $(-*)$-positiveness of $K$ to $s,t\in Z$, $\xi\in D_{s^{-*}}$ and $\eta\in D_{t^{-*}}$, we have 
$$
0\leq\bigl(K(s,s^{-*})\xi\mid\xi\bigr)_{s,s^{-*}}
+ \bigl(K(s,t^{-*})\eta\mid\xi\bigr)_{s,s^{-*}}
+\bigl(K(t,s^{-*})\xi\mid\eta\bigr)_{t,t^{-*}}
\bigl(K(t,t^{-*})\eta\mid\eta\bigr)_{t,t^{-*}}
$$
whence we obtain that the sum
$\bigl(K(s,t^{-*})\eta\mid\xi\bigr)_{s,s^{-*}}
+\bigl(K(t,s^{-*})\xi\mid\eta\bigr)_{t,t^{-*}}$ is a real number. So
$$
\Im \bigl(K(s,t^{-*})\eta\mid\xi\bigr)_{s,s^{-*}}=-\Im\bigl(K(t,s^{-*})\xi\mid\eta\bigr)_{t,t^{-*}}\ .
$$
By substituting $\xi$ with $i\xi$ we derive that 
$\Im i\bigl(K(s,t^{-*})\eta\mid\xi\bigr)_{s,s^{-*}}=\Im i\bigl(K(t,s^{-*})\xi\mid\eta\bigr)_{t,t^{-*}}$.
Therefore
\begin{equation}\label{exchange}
\bigl(K(s,t^{-*})\eta\mid\xi\bigr)_{s,s^{-*}}=\overline{\bigl(K(t,s^{-*})\xi\mid\eta\bigr)_{t,t^{-*}}}
=\bigl(\eta\mid K(s^{-*},t)\xi\bigr)_{s,s^{-*}}\ . 
\end{equation}
This implies that $K(a,b)^{-*}=K(b^{-*},a^{-*})$ for every $a,b\in Z$.
\qed
\end{remark}

Reproducing $(-*)$-kernels on  like-Hermitian bundles have been introduced in \cite{BG08} 
where they have been used to show that certain $*$-representations of involutive Banach-Lie groups 
(in particular Banach-Lie groups of invertibles in $C^*$-algebras) 
admit realizations on Hilbert spaces $\Hc^K$ of holomorphic sections on like-Hermitian vector bundles.

Such spaces $\Hc^K$ are constructed in the following way. 
For all $s\in Z$ and $\xi\in D_s$ denote 
$K_\xi=K(\cdot,s)\xi\in\Gamma(Z,D)$. (In fact $K_\xi$ is a holomorphic section, 
i.e., $K_\xi\in\Oc(Z,D)$, provided that the bundle $\Pi\colon D\to Z$ is holomorphic.) 
Then denote 
$\Hc^K_0:=\spann\{K_\xi\mid\xi\in D\}\subseteq\Gamma(Z,D)$.  
The Hilbert space $\Hc^K$ is defined as the completion of $\Hc^K_0$ with respect to the scalar product given by
\begin{equation}\label{prods}
(K_\eta\mid K_\xi)_{\Hc}=(K(s^{-*},t)\eta\mid\xi)_{s^{-*},s} 
\end{equation}
whenever $s,t\in Z$, $\xi\in D_{s}$, and $\eta\in D_t$. 
See additional details in \cite{BG08}.

Each one of the categories $\GLH$, $\Rg$, $\HLH$, $\SLH$ entails its own canonical type of $(-*)$-kernels. 
To begin with, let us consider the category $\GLH$. 
In this case, for a bundle $\Pi$ being under the action of a group $G$, it sounds sensible, 
for a section $K$ to be a reproducing kernel on $\Pi$, to demand some condition relating $K$ and $G$. 
The suitability of the 
following one will emerge in Section~\ref{sect5} (Theorem~\ref{U10}).

\begin{definition}\label{reprogroup}
\normalfont
Let $(\Pi,G)$ be an object in the category $\GLH$. Let $K$ be a reproducing $(-*)$-kernel on $\Pi$. 
We say that $K$ is a 
reproducing $(-*)$-kernel on $(\Pi,G)$ if $K$ satisfies
\begin{equation}\label{kgroup}  
K(t,\nu(u,s))\mu(u,\xi)=\mu(u,K(\nu(u^{-1},t),s)\xi) 
\end{equation}
for every $s,t\in Z$, $\xi\in D_s$, and $u\in G$.
\qed
\end{definition}

We next describe the $(-*)$-kernels of the remaining categories in a categorial framework, 
showing in passing that the elements of positive character can be in turn 
regarded as objects of suitable categories.

\begin{definition}\label{cat1}
\normalfont
Let $\widetilde{\Pi}\colon\widetilde{D}\to\widetilde{Z}$ 
and $\Pi\colon D\to Z$ be two like-Hermitian bundles, 
and let $\widetilde{K}$ and $K$ be reproducing $(-*)$-kernels 
on the bundles $\widetilde{\Pi}$ and $\Pi$, respectively. 
A \textit{morphism of reproducing $(-*)$-kernels} 
from $\widetilde{K}$ to $K$ 
(respectively, an \textit{antimorphism of reproducing $(-*)$-kernels} 
from $\widetilde{K}$ to $K$)
is a morphism $\Theta=(\delta,\zeta)$ of $\widetilde{\Pi}$ into $\Pi$ 
(see Definition~\ref{morph}) 
(respectively, an antimorphism) 
with the property that there exists a constant $M>0$ 
such that for every integer $n\ge1$, 
every $s_1,\dots,s_n\in \widetilde{Z}$, 
and all $\xi_1\in\widetilde{D}_{s_1},\dots, 
\xi_n\in\widetilde{D}_{Œs_n}$ 
we have 
\begin{equation}\label{hom}
\sum_{l,j=1}^n\bigl(K(\zeta(s_l)^{-*},\zeta(s_j))
\delta(\xi_j)\mid\delta(\xi_l)\bigr)_{\zeta(s_l)^{-*},\zeta(s_l)} 
\le M\sum_{l,j=1}^n 
\bigl(\widetilde{K}(s_l^{-*},s_j)\xi_j\mid\xi_l\bigr)_{s_l^{-*},s_l}. 
\end{equation}
We denote by $\Hom(\widetilde{K},K)$ 
the set of all morphisms from $\widetilde{K}$ to $K$.

It is clear that the composition of two morphisms of 
reproducing $(-*)$-kernels is again a morphism of reproducing $(-*)$-kernels. 
Consequently we get a category $\Kern$ whose 
objects are the reproducing $(-*)$-kernels on like-Hermitian bundles, and whose morphisms are defined as above.
\qed 
\end{definition}

\begin{remark}\label{cat2}
\normalfont
In the setting of Definition~\ref{cat1} 
it is clear that the mapping 
$$
\Hc_0^{\widetilde{K}}\to\Hc_0^{K},\quad 
\sum_{j=1}^n\widetilde{K}_{\tilde{\xi}_j}\mapsto 
\sum_{j=1}^n K_{\delta(\tilde{\xi}_j)}
$$
is well defined and continuous, 
and it extends to a bounded linear operator 
$\Hc^\Theta\colon\Hc^{\widetilde{K}} 
\to\Hc^{K}$ 
(respectively, an antilinear operator if $\Theta$ is an antimorphism) 
satisfying $\Vert\Hc^\Theta\Vert\le M$. 
(See Section~3 in \cite{BG08}.) 

If $\Theta_1$ and $\Theta_2$ are morphisms or antimorphisms 
of reproducing $(-*)$-kernels 
whose composition $\Theta_1\circ\Theta_2$ makes sense, 
then we have 
\begin{equation}\label{funct}
\Hc^{\Theta_1\circ\Theta_2}=
\Hc^{\Theta_1}\circ\Hc^{\Theta_2}.
\end{equation}
\qed
\end{remark}

\begin{definition}\label{cat3}
\normalfont 
Let $\Hilb$ be the category whose 
objects are the Hilbert spaces over ${\mathbb C}$ 
and whose morphisms are the bounded linear operators. 
The \textit{reproducing kernel Hilbert space functor} 
is the functor 
\begin{equation}\label{rkhs}
\Hc\colon\Kern\to\Hilb
\end{equation}
that takes every reproducing $(-*)$-kernel $K$ to 
the associated reproducing kernel Hilbert space $\Hc^K$ 
(see Definition~3.4 in \cite{BG08}) 
and every morphism $\Theta\in\Hom(\widetilde{K},K)$ 
to the bounded linear operator 
$\Hc^\Theta\colon\Hc^{\widetilde{K}} 
\to\Hc^{K}$.
The fact that \eqref{rkhs} is indeed a functor 
follows by \eqref{funct}. 
\qed
\end{definition}

\begin{remark}\label{SevCat}
\normalfont 
The functor $\Hc$ was considered in \cite{Od92} in the case of line vector bundles of finite dimension. 
Here $\Hc$ is also the main correspondence between categories that we consider in this paper, 
since it allows us to relate general kernels or fiber vector bundles with Grassmannian manifolds. 
In Proposition~\ref{posifunct} below we establish other functors between the following categories:
\begin{itemize}
\item[$\bullet$] $\Tran$ ---its objects are the transfer mappings $\Rc$ associated with the objects of the category $\Rg$, 
and where the  morphisms are defined as the linear mappings $L$, 
appearing in the morphisms $(\Theta,\alpha, L)$ of $\Rg$, for which there exists a constant $M>0$ 
such that for every integer $n\ge1$, every $s_1,\dots, s_n\in \widetilde{Z}$, 
and all $\xi_1\in\widetilde{D}_{s_1},\dots, \xi_n\in\widetilde{D}_{s_n}$ 
we have 
\begin{equation}\label{trans}
\sum_{l,j=1}^n\bigl(\Rc(L(\xi_l))\mid\Rc(L(\xi_j))\bigr)_{\Hc} 
\le M\sum_{l,j=1}^n \bigl(\widetilde{\Rc}(\xi_j)\mid\widetilde{\Rc}(\xi_l)\bigr)_{\widetilde{\Hc}}. 
\end{equation}

\item[$\bullet$] $\Rep$, whose objects are the $*$-representations associated with the objects of $\HLH$ 
and the morphisms are the pairs $(\alpha, L)$, appearing as the morphisms of the category $\HLH$, 
for which there exists a constant $M>0$ 
such that for every integer $n\ge1$, every $u_1,\dots, u_n\in \widetilde{G}_A$, 
and all $h_1,\dots, h_n\in\widetilde{\Hc}_A$ 
we have 
\begin{equation}\label{repr}
\sum_{l,j=1}^n\bigl(\pi_A(\alpha(u_l))(L(h_l))
\mid\pi_A(\alpha(u_j))(L(h_j))\bigr)_{\Hc_A} 
\le M\sum_{l,j=1}^n \bigl(\tilde{\pi}_A(u_l)h_l
\mid
\tilde{\pi}_A(u_j)h_j\bigr)_{\widetilde{\Hc}_A}. 
\end{equation}  

\item[$\bullet$] $\Cp$, whose objects are the completely posive maps associated with the objects of  the category $\SLH$ 
and the morphisms are the pairs $(\alpha, T)$ appearing in the morphisms of the category $\SLH$,
for which there exists a constant $M>0$ 
such that for every integer $n\ge1$, every positive 
$(a_{lj})\subset M_n(\widetilde A)$, 
and all $h_1,\dots, h_n\in\widetilde{\Hc}_0$  
we have 
\begin{equation}\label{comPos}
\sum_{l,j=1}^n
\bigl(\Phi(\alpha(a_{l,j}))T(h_l)
\mid T(h_j)\bigr)_{\Hc_0} 
\le M\sum_{l,j=1}^n \bigl(\widetilde{\Phi}(a_{lj})h_l
\mid h_j\bigr)_{\widetilde{\Hc}_0}. 
\end{equation}  
\end{itemize}
\qed
\end{remark}

Completely positive mappings have been considered as (relevant) morphisms of suitable categories 
of matrix ordered spaces or operator systems; 
see for instance \cite{CE77}, \cite {ER00}, \cite{Pa02}, or \cite{Zu97}. 
It is maybe worth noticing that such mappings are viewed in the context developed here 
as {\it objects} of a category, rather than morphisms.   

%In the next proposition we use previous notation.

\begin{proposition}\label{posifunct} 
The following correspondences define functors between the respective categories.
\begin{itemize}
\item[(i)] The dilation functor $\Cp\rightarrow\Rep$ given by
$$
\Phi\mapsto\pi_A\quad \hbox{ and } \quad (\alpha,T)\mapsto(\alpha,\alpha\otimes T)\ ,
$$
where $\pi_A$ is the Stinespring dilation associated with 
the completely positive mapping $\Phi$ and $\alpha\otimes T$ is 
the continuous extension to $\widetilde\Hc_A$ of the map defined in \eqref{tensor}.
\item[(ii)] $\Rep\rightarrow\Tran$ given by
$$
\pi\mapsto\Rc^{\pi}\quad \hbox{ and } \quad (\alpha,L)\mapsto(\Theta;\alpha,L),
$$
where $\Rc^{\pi}([(u,f)])=\pi(u)f$, for every $u\in G_A$, $f\in\Hc_B$.
\item[(iii)] $\Tran\rightarrow\Kern$ given by
$$
\Rc\mapsto K^{\Rc}\quad \hbox{ and } \quad (\Theta;\alpha,L)\mapsto\Theta,
$$
where $K^{\Rc}(s,t)=(\Rc_{s^{-*}})^{-*}\circ\Rc_t$, for every $s,t\in Z$. 
(Here, the expression 
$(\Rc_{s^{-*}})^{-*}$ has the meaning explained in Remark~\ref{sesqui1}.)
\end{itemize}
\end{proposition}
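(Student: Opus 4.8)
The plan is to treat the three functors one at a time. For each, well-definedness on objects is built into the construction, so the real content is to check that a morphism is carried to a genuine morphism of the target category — that is, that the relevant boundedness estimate (\eqref{repr}, \eqref{trans}, or \eqref{hom}) is inherited — after which preservation of identities and composites is checked. Preservation of identities is immediate in all three cases. Preservation of composition reduces, for (i), to the identity $(\alpha_2\otimes T_2)\circ(\alpha_1\otimes T_1)=(\alpha_2\circ\alpha_1)\otimes(T_2\circ T_1)$ on the algebraic tensor product, which persists after continuous extension; for (ii) and (iii) it follows from the functoriality of the $\Theta$-assignment together with \eqref{funct}. Thus the substantive work is well-definedness of the morphism maps, and the single delicate point is the continuous extension of $\alpha\otimes T$ in part~(i), flagged as problematic right after \eqref{tensor}.

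For part~(i) I would first show that the positivity hypothesis \eqref{comPos} is exactly what forces $\alpha\otimes T$ to descend to the Stinespring spaces. Given $x=\sum_{j=1}^n b_j\otimes\eta_j\in\widetilde A\otimes\widetilde\Hc_0$, the seminorm of $(\alpha\otimes T)x=\sum_j\alpha(b_j)\otimes T\eta_j$ in $A\otimes\Hc_0$ is
\[\sum_{i,j}\bigl(\Phi(\alpha(b_i)^*\alpha(b_j))T\eta_j\mid T\eta_i\bigr)_{\Hc_0}=\sum_{i,j}\bigl(\Phi(\alpha(b_i^*b_j))T\eta_j\mid T\eta_i\bigr)_{\Hc_0},\]
using that $\alpha$ is a unital $*$-homomorphism. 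Since the matrix $(b_i^*b_j)_{i,j}$ is positive in $M_n(\widetilde A)$, inequality \eqref{comPos} (applied to this matrix and the vectors $\eta_i$) bounds the above by $M\sum_{i,j}(\widetilde\Phi(b_i^*b_j)\eta_j\mid\eta_i)_{\widetilde\Hc_0}=M\Vert x\Vert^2$. Hence $\alpha\otimes T$ maps the null space $N_{\widetilde A}$ into $N_A$, descends to a bounded operator on the quotients of norm at most $\sqrt M$, and extends to $L:=\alpha\otimes T\colon\widetilde\Hc_A\to\Hc_A$ with $\Vert L\Vert^2\le M$. It then remains to identify $(\alpha,L)$ as a morphism of $\Rep$: the intertwining $L\circ\widetilde\pi_A(u)=\pi_A(\alpha(u))\circ L$ holds on elementary tensors by multiplicativity of $\alpha$ (hence everywhere by density), the inclusion $L(\widetilde\Hc_B)\subseteq\Hc_B$ follows because the $\SLH$-condition~(a) forces $\alpha(\widetilde B)\subseteq B$, and finally \eqref{repr} is obtained by rewriting its left-hand side as $\Vert L(\sum_l\widetilde\pi_A(u_l)h_l)\Vert^2$ (again via the intertwining) and invoking $\Vert L\Vert^2\le M$.

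For part~(ii) the object map sends a representation to the transfer map $\Rc^{\pi}([(u,f)])=\pi(u)f$ of the associated homogeneous bundle, and the morphism map reuses the bundle morphism $\Theta=([\alpha\times L],\alpha_q)$ from the functor $\HLH\to\Rg$, so only \eqref{trans} needs checking. A fibre vector has the form $\xi=[(u,f)]$ with $f\in\widetilde\Hc_B\subseteq\widetilde\Hc_A$ and $\Rc^{\pi}(\delta\xi)=\pi_A(\alpha(u))L(f)$, so \eqref{trans} is precisely the restriction of \eqref{repr} to vectors $h\in\widetilde\Hc_B$ and is thereby inherited. For part~(iii) I would first verify that $K^{\Rc}(s,t)=(\Rc_{s^{-*}})^{-*}\circ\Rc_t$ is a reproducing $(-*)$-kernel: by the defining relation of the quasi-adjoint in Remark~\ref{sesqui1},
\[\bigl(K^{\Rc}(t_l,t_j^{-*})\eta_j\mid\eta_l\bigr)_{t_l,t_l^{-*}}=\bigl(\Rc_{t_j^{-*}}\eta_j\mid\Rc_{t_l^{-*}}\eta_l\bigr)_{\Hc},\]
so the defining sum equals $\Vert\sum_j\Rc_{t_j^{-*}}\eta_j\Vert_{\Hc}^2\ge0$ and $K^{\Rc}(s,t)$ maps $D_t$ to $D_s$ as required. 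The same identity rewrites the left-hand side of \eqref{hom} (for $K=K^{\Rc}$) as $\Vert\sum_j\Rc(\delta\xi_j)\Vert_{\Hc}^2$, and using property~(d), $\Rc\circ\delta=L\circ\widetilde\Rc$, this is exactly the left-hand side of \eqref{trans}; hence $\Theta$ is a morphism of kernels.

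The main obstacle is the continuous extension in part~(i): without \eqref{comPos} the map $\alpha\otimes T$ need not even pass to the quotient by $N_{\widetilde A}$, and the whole functor $\Cp\to\Rep$ hinges on the single positivity estimate supplying simultaneously the descent to the quotient, the norm bound $\Vert L\Vert^2\le M$, and (through the intertwining) the target estimate \eqref{repr}. The remaining verifications — the smoothness of $K^{\Rc}$ inherited from the holomorphy/smoothness of $\Rc$ built into Definition~\ref{transrel}, and the bookkeeping of identities and composites — are straightforward.
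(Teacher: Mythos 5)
Your proposal is correct and follows essentially the same route as the paper: the substance is in part (i), where the positivity estimate \eqref{comPos} is used to show that $\alpha\otimes T$ maps $N_{\widetilde A}$ into $N_A$, descends to the Stinespring spaces with norm bound $\sqrt{M}$, and intertwines the representations, while (ii) and (iii) reduce to the identity $(K^{\Rc}(s,t)\eta\mid\xi)_{s,s^{-*}}=(\Rc_t\eta\mid\Rc_{s^{-*}}\xi)_{\Hc}$. Your derivation of \eqref{repr} directly from $\Vert L\Vert^2\le M$ together with the intertwining relation is a slightly cleaner shortcut than the paper's explicit computation on elementary tensors followed by a density argument, but it is the same argument in substance.
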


\begin{proof} We shall show that the pair $(\alpha,\alpha\otimes T)$ is 
well defined as a morphism in the category $\Rep$. 
The functorial property follows then readily.

So let $\widetilde\Phi\colon\widetilde A\to\Bc(\widetilde\Hc_0)$,
$\Phi\colon A\to\Bc(\Hc_0)$ objects in the category $\Cp$, with associated triples 
$(\widetilde A,\widetilde B, \widetilde E)$, $(A,B,E)$ respectively. 
Take a morphism 
$(\alpha,T)\colon\widetilde\Phi\to\Phi$. 
This means that $\alpha\colon\widetilde A\to A$ is a unital algebra $*$-homomorphism 
with $\alpha(\widetilde B)\subseteq B$ and that 
$T\colon\widetilde\Hc_0\to\Hc_0$ is a bounded linear operator.

Take $x=\sum_{i=1}^na_i\otimes y_i$ in $N_{\widetilde A}
\subseteq\widetilde A\otimes\widetilde\Hc_0$. Then 
$\sum_{i,j=1}^n(\widetilde\Phi(a_i^{*}a_j)y_j\mid y_i)_{\widetilde\Hc_0}=0$ and therefore
$\sum_{i,j=1}^n(\Phi(\alpha(a_i)^{*}\alpha(a_j) Ty_j\mid Ty_i)_{\Hc_0}
\le M\sum_{i,j=1}^n(\widetilde\Phi(a_i^{*}a_j) y_j\mid y_i)_{\widetilde\Hc_0}=0$ from which $(\alpha\otimes T)(x)\in N_A$. 
Thus the quotient mapping 
$(\alpha\otimes T)_q\colon\widetilde A\otimes\widetilde\Hc_0/N_{\widetilde A}
\rightarrow A\otimes\Hc_0/N_{A}$ is well defined. 
Moreover, for 
$x\in\widetilde A\otimes\widetilde\Hc_0$ as above, we have 
$$
\begin{aligned}
\Vert\alpha\otimes T(x)\Vert_{\Hc_A}^2
&=\Vert \sum_{i} \alpha(a_i)\otimes Ty_i\Vert_{\Hc_A}^2
=\sum_{i,j}(\Phi(\alpha(a_i^{*}a_j))Ty_j\mid Ty_i)_{\Hc_0} \\
&\le M\sum_{i,j}(\widetilde\Phi(a_i^{*}a_j)y_j\mid y_i)_{\widetilde\Hc_0}
=M\Vert x\Vert_{\widetilde\Hc_A}^2
\end{aligned}
$$
so that $(\alpha\otimes T)_q$ extends by continuity from $\widetilde\Hc_A$ to $\Hc_A$. 
We keep the notation $\alpha\otimes T$ for such an extension.

Note that, in an analogous way, $\alpha\otimes T$ also extends continuously from 
$\widetilde\Hc_B$ to $\Hc_B$.
Let us now see that $(\alpha,\alpha\otimes T)$ is a morphism from 
$(\pi_{\widetilde A},\pi_{\widetilde B},\widetilde P)$ into $(\pi_{A},\pi_{B},P)$ in $\HLH$.
Since $\alpha\circ\widetilde E\mid_{\widetilde B}=E\circ\alpha_{\widetilde B}$ it follows that 
$\alpha(\G_{\widetilde B})\subseteq\G_{B}$ and, also, 
$$
\begin{aligned}
P\circ(\alpha\otimes T)(\sum_i b_i\otimes y_i+N_{\widetilde B})
&=P(\sum_i\alpha(b_i)\otimes Ty_i+N_{\widetilde B}) 
=\sum_i E(\alpha(b_i))\otimes Ty_i+N_{\widetilde B} \\
&=\sum_i\alpha(\widetilde E(b_i))\otimes Ty_i+N_{\widetilde B}
=(\alpha\otimes T)\widetilde P(\sum_i b_i\otimes y_i+N_{\widetilde B}).
\end{aligned}
$$
for each $\sum_i b_i\otimes y_i+N_{\widetilde B}\in\widetilde B\otimes\widetilde\Hc_0$. By density it follows that 
$P\circ(\alpha\otimes T)=(\alpha\otimes T)\widetilde P$ on $\widetilde\Hc_B$.

Now for every $u\in\G_{\widetilde A}$ and $\sum_i a_i\otimes y_i+N_{\widetilde A}
\in\widetilde A\otimes\widetilde\Hc_0$,
$$
\begin{aligned}
(\alpha\otimes T)\pi_{\widetilde A}(u)&(\sum_i a_i\otimes y_i+N_{\widetilde A})
=\sum_i\alpha(ua_i)\otimes Ty_i+N_{\widetilde A} \\
&=\pi_{A}(\alpha(u))(\sum_i\alpha(a_i)\otimes Ty_i+N_{\widetilde A})=\pi_{A}(\alpha(u))(\alpha\otimes T)(\sum_i a_i\otimes y_i+N_{\widetilde A}).
\end{aligned}
$$
Again by density we obtain that $(\alpha\otimes T)\pi_{\widetilde A}(u)=\pi_{A}(\alpha(u))(\alpha\otimes T)$ for all 
$u\in\G_{\widetilde A}$. Note that in particular, from the above equality we deduce that the mapping 
$$
(\alpha,\alpha\otimes T)\colon[(u,f)]\mapsto[(\alpha(u),(\alpha\otimes T)f)], 
\ \G_{\widetilde A}\times_{\G_{\widetilde B}}\Hc_{\widetilde B}
\rightarrow\G_{A}\times_{\G_{B}}\Hc_{ B}
$$
is well defined. 

It is clear that 
$\alpha_q\circ(\1_{\G_{\widetilde A}}\times\widetilde\Pi)
= (\1_{\G_A}\times\Pi)\circ(\alpha,\alpha\otimes T)$, where $\alpha_q$ is the quotient mapping
defined by  $\alpha_q(u \G_{\widetilde B})=\alpha(u)\G_{B}$  for $u\in\G_{\widetilde A}$. 
Moreover, 
$\alpha_q(u \G_{\widetilde B})^{-*}=\alpha_q((u \G_{\widetilde B})^{-*})$ for all 
$u\in\G_{\widetilde A}$.

Since all the above mappings are clearly holomorphic, we have shown that 
$((\alpha,\alpha\otimes T),\alpha_q)$ is a morphism in the category $\HLH$, as required in a first step.

Finally, for $n\ge1$, $u_i\in\G_{\widetilde A}$, 
$f_i=\sum_{k=1}^m a_k^{i}\otimes y_k^{i}$ with $a_k^{i}\in\widetilde A$ and 
$y_k^{i}\in\widetilde\Hc_0$ ($i=1,\dots,n$), we get
$$
\begin{aligned}
\sum_{i,j=1}^n (\pi_A(\alpha(u_i)) & (\alpha\otimes T)(f_i)
\mid\pi_A(\alpha(u_j))(\alpha\otimes T)(f_j))_{\Hc_A}  \\
&=\sum_{i,j=1}^n\sum_{k=1}^m(\pi_A(\alpha(u_i))(\alpha(a_k^{i})\otimes Ty_k^{i})
\mid\pi_A(\alpha(u_j))(\alpha(a_k^{j})\otimes Ty_k^{j}))_{\Hc_A} \\
&=\sum_{k=1}^m\sum_{i,j=1}^n(\alpha(u_ia_k^{i})\otimes Ty_k^{i}
\mid\alpha(u_ja_k^{j})\otimes Ty_k^{j})_{\Hc_A} \\
&=\sum_{k=1}^m\sum_{i,j=1}^n
(\Phi(\alpha((u_ja_k^{j})^{*}u_ia_k^{i}))Ty_k^{i}
\mid Ty_k^{j} )_{\Hc_0} \\
&\le\sum_{k=1}^m M\sum_{i,j=1}^n (\widetilde\Phi((u_ja_k^{j})^{*}u_ia_k^{i})Ty_k^{i}
\mid y_k^{j} )_{\widetilde\Hc_0}
=M\sum_{i,j=1}^n(\pi_{\widetilde A}(u_i)f_i
\mid\pi_{\widetilde A}(u_j)f_j)_{\Hc_{\widetilde A}},
\end{aligned}
$$
where in the inequality we have used that $(\alpha,T)$ is a morphism between 
$\widetilde\Phi$ and $\Phi$.

For arbitrary $(f_i)\subseteq\Hc_{\widetilde A}$ it is enough to approximate each $f_i$ by elements in 
$\widetilde A\otimes\widetilde\Hc_0/N_{\widetilde A}$ and apply the continuity of $\alpha\otimes T$ 
to the just proved inequality, to obtain that 
$(\alpha,\alpha\otimes T)$ is a morphism in the category~$\Rep$. 

(ii) This part is quite obvious.

(iii) The statement is a simple consequence of the fact that for every $s,t\in Z$, 
$\xi\in D_{s^{-*}}$ and $\eta\in D_t$ one has
$(K^{\Rc}(s,t)\eta\mid\xi)_{s,s^{-*}}=((\Rc_{s^{-*}})^{-*}(\Rc_t\eta)\mid\xi)_{s,s^{-*}}
=(\Rc_t\eta\mid\Rc_{s^{-*}}\xi)_{\Hc}$. 
\end{proof}

\begin{remark}\label{kapi}
\normalfont
(1) The kernel $K^{\Rc}$ defined in part (iii) of the above proposition 
is a reproducing kernel on $(\Pi,G)$ in $\Rg$. 
In fact, take $s,t\in Z$, $\xi\in D_s$ and $u\in G$. 
For simplicity, we put just in this remark $u\cdot s:=\nu(u,s)$ and $u\cdot\xi:=\mu(u,\xi)$. 
Then, for every $\eta\in D_{t^{-*}}$,
$$  
\begin{aligned}
(K^{\Rc}(t,u\cdot s)(u\cdot\xi)\mid\eta)_{t,t^{-*}}
&=(\Rc_{u\cdot s}(u\cdot\xi)\mid\Rc_{t^{-*}}\eta)_\Hc 
=((\pi(u)\Rc_s)(\xi)\mid\Rc_{t^{-*}}\eta)_\Hc \\
&=(\Rc_s\xi\mid\pi(u)^*\Rc_{t^{-*}}\eta)_\Hc 
=(\Rc_s\xi\mid\pi(u^*)\Rc_{t^{-*}}\eta)_\Hc \\ 
&=(\Rc_s\xi\mid\Rc_{u^*\cdot t^{-*}}(u^*\cdot\eta))_\Hc 
=((\Rc_{u^*\cdot t^{-*}})^{-*}(\Rc_s\xi)\mid u^*\cdot\eta) 
_{(u^{-1}\cdot t), (a^*\cdot t^{-*})} \\
&=(u\cdot[ (\Rc_{u^*\cdot t^{-*}})^{-*}(\Rc_s\xi)]\mid\eta)_{t,t^{-*}}
=(u\cdot[ K^{\Rc}(u^{-1}\cdot t,s)\xi]\mid\eta)_{t,t^{-*}}
\end{aligned}
$$
where we have used (iii) of Definition~\ref{transrel} twice, 
and (ii) and (iii) of Definition~\ref{relationship}. 
Since the sesquilinear form
$(\cdot\mid\cdot)_{t,t^{-*}}$ is a strong duality pairing, by letting $\eta$ running over 
$D_{t^{-*}}$ one obtains 
$K^{\Rc}(t,u\cdot s)(u\cdot\xi)=u\cdot K^{\Rc}(u^{-1}\cdot t,s)\xi$ as we wanted to show.

It is worth noticing that, for {\it any} $(-*)$-kernel $K$ associated with a like-Hermitian bundle 
$\Pi\colon D\to Z$, there always exists a Hilbert space $\Hc$ and a mapping 
$\Rc_K\colon D\to\Hc$ enjoying part of the properties of a transfer mapping, such that 
$K=K^{\Rc_K}$. In fact, one can choose $\Hc=\Hc^K$ as 
in the construction pointed out prior to Definition \ref{reprogroup}. 
Then by Proposition~3.7 c) of \cite{BG08} there are 
the \emph{evaluation} maps $\ev_s^\iota\colon\Hc^K\to D$ ($s\in Z$), so that 
$K(s,t)=\ev_s^\iota\circ(\ev_{t^{-*}}^\iota)^{-*}$ for all $s,t\in Z$. 
Thus one can define 
$\Rc_K(\xi):=(\ev_{s^{-*}}^\iota)^{-*}(\xi)$ for every $s\in Z$ and $\xi\in D_s$. 
It is routine to check that 
$K=K^{\Rc_K}$. 
However, one cannot assert that $\Rc_K$ is an isometry since 
$(\Rc_K(\xi)\mid\Rc_K(\eta))_{\Hc^K}=(K(s,s)\xi\mid\eta)_{s,s^{-*}}$ for $s\in Z$ and $\xi\in D_s$, 
$\eta\in D_{s^{-*}}$.

(2) A kernel $K^{\pi}$ can be associated to a given representation $\pi\in\Rep$ by the composition of functors $\Rep\to\Tran\to\Kern$. 
Such a kernel is therefore given as 
$K^{\pi}(s,t)=(\Rc_{s^{-*}})^{-*}\circ\Rc_t$ for $s,t\in D=G_A\times_{G_B}\Hc_B$, where 
$\Rc([(w,h)]):=\pi_A(w)h\in\Hc_A$ for every $[(w,h)]\in D$. 
Indeed, kernels of type $K^{\pi}$ admit alternatively a description as
\begin{equation}\label{jones}
K^{\pi}(s,t)\eta=[(u,P(\pi_A(u^{-1})\pi_A(v)f))]\in D_s 
\end{equation}
provided that $u,v\in G_A$, $s=uG_B$, $t=vG_B$, $\eta=[(v,f)]\in D_t$, 
and $P\colon\Hc_A\to\Hc_B$ is the orthogonal projection.
In fact, for each $\xi=[(u^{-*},g)]\in D_{s^{-*}}$, we have
$$
\begin{aligned}
(K^{\pi}(s,t)\eta\mid\xi)_{s,s^{-*}}
&=((\Rc_{s^{-*}})^{-*}(\Rc_t\eta)\mid\xi)_{s,s^{-*}} 
=(\Rc_t\eta\mid\Rc_{s^{-*}}\xi)_{\Hc_A}
=(\pi_A(v)f\mid\pi_A(u^{-*})g)_{\Hc_A} \\
&=(\pi_A(u^{-1})\pi_A(v)f\mid g)_{\Hc_B}=(P(\pi_A(u^{-1})\pi_A(v)f)\mid g)_{\Hc_B} \\
&=([(u,P(\pi_A(u^{-1})\pi_A(v)f))]\mid[(u^{-*},g)])_{s,s^{-*}}
\end{aligned}
$$
where the last equality follows by \eqref{strong}. 
So \eqref{jones} follows.

In turn, the composition of functors $\Cp\to\Rep\to\Kern$ gives us what we call here the {\it Stinespring kernel functor} $\Cp\to\Kern$ defined by 
$$
\Phi\mapsto K^\Phi, \quad (\alpha,T)\mapsto(\Theta,\alpha,L)
$$
where $K^\Phi$ is given as in \eqref{jones}, and where $\pi_A$ the Stinespring dilation 
of $\Phi$, $\Theta$ is the vector bundle morphism $\Theta=(\delta,\zeta)$ with 
$\delta=[\alpha\times L]$, $\zeta=\alpha_q$, and $L=\alpha\otimes T$ as defined formerly.

The expression of $K^\pi$ given in \eqref{jones} is the form under which the canonical kernels associated to homogeneous like-Hermitian bundles have been defined in \cite{BG08}, 
see also \cite{BR06}. 
Such kernels are used in  \cite{BG08} to construct Hilbert spaces of holomorphic sections on those bundles.
\qed
\end{remark}

\section{Operations on reproducing $(-*)$-kernels. Pull-backs}\label{sect3}

In this section we discuss the operation of pull-back on reproducing
kernels
in the general setting of like-Hermitian bundles.
A few special instances of this operation had been previously
in the literature in the case of trivial bundles;
see e.g., \cite{Ne00} or subsection~3.2 in \cite{CVTU08}.
The pull-backs will play a crucial role in connection with the
universality theorems
that we are going to establlish in Section~\ref{sect5}.

\begin{definition}\label{cone}
\normalfont
Let $\Pi\colon D\to Z$ be a like-Hermitian vector bundle, 
and denote by $p_1,p_2\colon Z\times Z\to Z$ the projection mappings. 
It is clear from Definition~3.1 in \cite{BG08} that the set of 
all reproducing $(-*)$-kernels on $\Pi$ 
is closed under addition and under multiplication by positive scalars. 
Thus, that set is a convex cone in the complex vector space 
$\Gamma(Z\times Z,\Hom(p_2^*\Pi,p_1^*\Pi))$; 
we shall denote this cone by 
$\RK(\Pi)$ 
and we shall call it 
the {\it cone of reproducing $(-*)$-kernels on $\Pi$}.
\qed
\end{definition}

\begin{definition}\label{pull}
\normalfont
Let $\widetilde{\Pi}\colon \widetilde{D}\to\widetilde{Z}$ and 
$\Pi\colon D\to Z$ be 
like-Hermitian vector bundles, and assume that 
each of the manifolds $\widetilde{Z}$ and $Z$ is endowed with an involutive 
diffeomorphism denoted by $z\mapsto z^{-*}$ for both manifolds. 
Denote by $p_1,p_2\colon Z\times Z\to Z$ and 
$\widetilde{p}_1,\widetilde{p}_2\colon\widetilde{Z}\times\widetilde{Z}
\to\widetilde{Z}$ 
the natural projections. 
Assume that $\Theta=(\delta,\zeta)$ is a morphism (or antimorphism) of 
$\widetilde{\Pi}$ into $\Pi$ 
(see Definition~\ref{morph}).

Now let $K\in\Gamma(Z\times Z,\Hom(p_2^*\Pi,p_1^*\Pi))$ 
be a reproducing kernel on $\Pi$, 
where $p_1,p_2\colon Z\times Z\to Z$ are the natural projections. 
Then the {\it pull-back} of $K$ by $\Theta$ is 
the reproducing kernel 
$$
\Theta^*K:=\widetilde{K}\in
\Gamma(\widetilde{Z}\times\widetilde{Z},
\Hom(\widetilde{p}_2^*\widetilde{\Pi},\widetilde{p}_1^*\widetilde{\Pi}))
$$
defined by 
\begin{equation}\label{pullK}
\widetilde{K}(s,t)=(\delta_{s^{-*}})^{-*}\circ K(\zeta(s),\zeta(t))\circ \delta_t
\end{equation}
for all $s,t\in Z$. 
Here $(\delta_{s^{-*}})^{-*}\colon D_{\zeta(s)}\to\widetilde D_s$ 
is the operator defined by \eqref{quasi-adjointable} 
(or \eqref{quasi-adjointable_anti}).
\qed
\end{definition}

It is easy to see that 
$\widetilde{K}$ is indeed a reproducing kernel on~$\widetilde{\Pi}$. 
Note that formula \eqref{pullK} means that the following diagram  
$$
\begin{CD}
D_{\zeta(t)} @>{K(\zeta(s),\zeta(t))}>> D_{\zeta(s)} \\
@A{\delta_t}AA @VV{(\delta_{s^{-*}})^{-*}}V \\
\widetilde{D}_t @>{\widetilde{K}(s,t)}>>  \widetilde{D}_s
\end{CD}
$$
is commutative for all $s,t\in Z$.

Let us further observe that formula \eqref{pullK} gives us the equality in \eqref{hom} of Definition~\ref{cat1}, for $M=1$. In this sense we can interpret that the pull-back of a given reproducing kernel $K$, with respect to a vector bundle $\Theta$, is another reproducing kernel $\Theta^*K$ for which $\Theta$ is {\it extremal}, say, for  \eqref{hom}, or, alternatively, $M$-extremal for \eqref{hom}, with $M=1$.     

\begin{example}\label{triv}
\normalfont
Let us consider the special case when both bundles $\widetilde{\Pi}$ and $\Pi$ 
are trivial and $\zeta$ is the identity map 
of some manifold $Z$. 
Thus, let $\widetilde{\Hc}$ and $\Hc$ be complex Hilbert spaces, and let 
$\kappa\colon Z\times Z\to\Bc(\Hc)$ be a reproducing kernel 
in the classical sense 
(see e.g., \cite{Ne00}), 
which can be identified with a reproducing kernel $K$ on 
the trivial bundle $Z\times\Hc\to Z$ by $K(s,t)=(s,t,\kappa(s,t))$ 
whenever $s,t\in Z$. 

Next let $F\colon Z \to\Bc(\widetilde{\Hc},\Hc)$ be any mapping, 
which can be identified with a homomorphism $\delta$ from the trivial bundle 
$Z\times \widetilde{\Hc}\to Z$ into the trivial bundle $Z\times\Hc\to Z$ 
defined by $\delta(z,\widetilde{f})=(z,F(z)\widetilde{f})$ 
for all $z\in Z$ 
and $\widetilde{f}\in\widetilde{\Hc}$ 
(see for instance \cite{La01}). 
Now let $\Theta=(\delta,\id_Z)$ (compare Definition~\ref{pull}). 

Then the pull-back of $K$ by $\Theta$ can be identified as above with 
a reproducing kernel (in the classical sense) defined by 
$Z\times Z\to\Bc(\widetilde{\Hc})$, 
$(s,t)\mapsto F(s)^*\circ\kappa(s,t)\circ F(t)$.

Now recall the setting of subsection 1.4. Let $A$ be a unital $C^*$-algebra and let $\Hc_0$, $\Hc_A$ be two complex Hilbert spaces. For a unital completely positive mapping $\Phi\colon A\to\Bc(\Hc_0)$, let 
$\pi_A\colon A\to\Bc(\Hc_A)$ be a Stinespring dilation of $\Phi$. Thus
\begin{equation}\label{dilcom}
\Phi(a)=V^*\circ\pi_A(a)\circ V,\ \qquad (a\in A)
\end{equation}
for some isometry $V\colon\Hc_0\to\Hc_A$. In the above example about trivial bundles, make the choice 
$Z=A$, $F(s)=V$ for all $s\in A$ so that $\delta=\id_A\times V$, and $\kappa(s,t)=\pi(s)$ for every 
$(s,t)\in A\times A$. Then the pull-back reproducing kernel of $\kappa$ by $\Theta:=(\delta,\id_A)$ is the mapping $\Phi\colon(s,t)\mapsto\Phi(s),\ A\times A\to\Bc(\Hc_0)$.

In other words, the fundamental relation \eqref{dilcom} 
can be interpreted as a particular example of the pull-back operation for reproducing kernels. 
 \qed
\end{example}

\begin{remark}\label{hol}
\normalfont
If both $\widetilde{\Pi}$ and $\Pi$ 
are holomorphic bundles, the pair $(\delta,\zeta)$ is a 
homomorphism of 
$\widetilde{\Pi}$ into $\Pi$, 
the mapping $\zeta$ is holomorphic, 
and the mapping $\delta$ is {\it anti}-holomorphic, 
then the reproducing kernel $\Theta^*K$ 
is holomorphic provided $K$ is holomorphic. 
\qed
\end{remark}

\begin{remark}\label{isometry}
\normalfont
In the setting of Definition~\ref{pull}, 
assume that $\Theta=(\delta,\zeta)$
is an isometry (see Definition~\ref{isometric}) and in addition 
$\delta_z\colon\widetilde{D}_z\to D_{\zeta(z)}$ 
is a bijective map 
for all $z\in\widetilde{Z}$. 
In this case it follows by Remark~\ref{morph2} that 
for any reproducing kernel $K$ on $\Pi$ we have 
$$
(\forall s,t\in\widetilde{Z})\quad 
\Theta^*K(s,t)
=(\delta_s)^{-1}\circ K(\zeta(s),\zeta(t))\circ \delta_t.
$$
An important instance when the above situation occurs 
is when the bundle $\widetilde{\Pi}$ is the pull-back of $\Pi$ by $\zeta$,   
and $\delta$ is the associated map. 
(See \cite{La01} for details on pull-backs of vector bundles.)  
\qed
\end{remark}

The next result provides us in particular with a characterization of 
pull-backs of reproducing $-*$-kernels in terms of the corresponding reproducing kernel Hilbert spaces.

\begin{proposition}\label{cat5}
Let $\widetilde{\Pi}\colon\widetilde{D}\to\widetilde{Z}$ 
and $\Pi\colon D\to Z$ be two like-Hermitian bundles, 
and let $\widetilde{K}$ and $K$ be reproducing $(-*)$-kernels 
on $\widetilde{\Pi}$ and $\Pi$, respectively. 
In addition, let $\Theta=(\delta,\zeta)$ 
be a morphism (respectively, antimorphism) 
of $\widetilde{\Pi}$ into $\Pi$. 
Then the following assertions hold: 
\begin{itemize}
\item[{\rm(i)}] We have $\widetilde{K}=\Theta^*K$ 
if and only if $\Theta\in\Hom(\widetilde{K},K)$ 
and $\Hc^\Theta\colon\Hc^{\widetilde{K}}
\to\Hc^{K}$ is an isometry. 
\item[{\rm(ii)}] Assume that 
$\Theta$ is an isometry of like-Hermitian structures 
and $\delta(\widetilde{D}_{\tilde{t}})$ is dense in 
$D_{\zeta(\tilde{t})}$ for all $\tilde{t}\in\widetilde{Z}$. 
If $\widetilde{K}=\Theta^*K$ and 
both $\widetilde{K}$ and $K$ are reproducing $(-*)$-kernels, 
then the diagram 
$$
\begin{CD}
\widetilde{D}_{\tilde{t}} @>{\delta|_{\widetilde{D}_{\tilde{t}}}}>> 
  D_{\zeta(\tilde{t})} \\
@A{\ev_{\tilde{t}}}AA @AA{\ev_{\zeta(\tilde{t})}}A \\
\Hc^{\widetilde{K}}  @>{\Hc^\Theta}>> 
  \Hc^{K}
\end{CD}
 $$
is commutative for all $\tilde{t}\in\widetilde{Z}$.
\end{itemize}
\end{proposition}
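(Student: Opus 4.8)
The plan is to unwind both assertions directly from the definitions of the pull-back kernel \eqref{pullK}, the reproducing kernel Hilbert space construction via \eqref{prods}, and the operator $\Hc^\Theta$ from Remark~\ref{cat2}, which sends $\widetilde{K}_{\tilde\xi}\mapsto K_{\delta(\tilde\xi)}$.

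For part (i), first I would compute the inner products governing $\Hc^\Theta$ and compare them with the scalar products defining $\Hc^{\widetilde K}$. For generators $\widetilde K_{\tilde\xi_j}$ with $\tilde\xi_j\in\widetilde D_{s_j}$, the scalar product $\bigl(\sum_j\Hc^\Theta(\widetilde K_{\tilde\xi_j})\mid\sum_l\Hc^\Theta(\widetilde K_{\tilde\xi_l})\bigr)_{\Hc^K}$ equals $\sum_{l,j}\bigl(K(\zeta(s_l)^{-*},\zeta(s_j))\delta(\tilde\xi_j)\mid\delta(\tilde\xi_l)\bigr)_{\zeta(s_l)^{-*},\zeta(s_l)}$ by \eqref{prods}, while $\bigl(\sum_j\widetilde K_{\tilde\xi_j}\mid\sum_l\widetilde K_{\tilde\xi_l}\bigr)_{\Hc^{\widetilde K}}=\sum_{l,j}\bigl(\widetilde K(s_l^{-*},s_j)\tilde\xi_j\mid\tilde\xi_l\bigr)_{s_l^{-*},s_l}$. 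Thus $\Hc^\Theta$ is an isometry precisely when these two double sums agree for all choices of data. The key observation is that, using the adjoint relation \eqref{quasi-adjointable} defining $(\delta_{s^{-*}})^{-*}$, the pull-back formula \eqref{pullK} gives $\bigl(\Theta^*K(s_l^{-*},s_j)\tilde\xi_j\mid\tilde\xi_l\bigr)_{s_l^{-*},s_l}=\bigl((\delta_{s_l})^{-*}K(\zeta(s_l^{-*}),\zeta(s_j))\delta(\tilde\xi_j)\mid\tilde\xi_l\bigr)$, and moving $(\delta_{s_l})^{-*}$ across the pairing via \eqref{quasi-adjointable} turns this into $\bigl(K(\zeta(s_l)^{-*},\zeta(s_j))\delta(\tilde\xi_j)\mid\delta(\tilde\xi_l)\bigr)_{\zeta(s_l)^{-*},\zeta(s_l)}$, using $\zeta(s_l^{-*})=\zeta(s_l)^{-*}$ from Definition~\ref{morph}(iii). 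Hence the identity $\widetilde K=\Theta^*K$ is term-by-term equivalent to equality of the two double sums, which is exactly the isometry of $\Hc^\Theta$. Since \eqref{hom} with $M=1$ is then an equality, $\Theta\in\Hom(\widetilde K,K)$ holds automatically; conversely if $\Theta\in\Hom(\widetilde K,K)$ and $\Hc^\Theta$ is isometric, reversing the computation forces $\widetilde K=\Theta^*K$ by the strong duality nondegeneracy of the pairings.

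For part (ii), I would argue that the two evaluation maps and $\delta$, $\Hc^\Theta$ fit into the asserted square by testing against generators. Recall from Remark~\ref{kapi}(1) and Proposition~3.7 of \cite{BG08} that $\ev_{\tilde t}$ is characterized through the reproducing property, so it suffices to check $\ev_{\zeta(\tilde t)}\bigl(\Hc^\Theta(\widetilde K_{\tilde\xi})\bigr)=\delta\bigl(\ev_{\tilde t}(\widetilde K_{\tilde\xi})\bigr)$ for $\tilde\xi\in\widetilde D_s$. The right-hand side involves evaluating the section $\widetilde K_{\tilde\xi}=\widetilde K(\cdot,s)\tilde\xi$ at $\tilde t$, giving $\delta\bigl(\widetilde K(\tilde t,s)\tilde\xi\bigr)$, while the left-hand side evaluates $K_{\delta(\tilde\xi)}=K(\cdot,\zeta(s))\delta(\tilde\xi)$ at $\zeta(\tilde t)$, giving $K(\zeta(\tilde t),\zeta(s))\delta(\tilde\xi)$. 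Using $\widetilde K=\Theta^*K$ together with \eqref{pullK} and the commutative square displayed after Definition~\ref{pull}, these two expressions agree after applying $\delta$, provided the isometry hypothesis lets us replace $(\delta_{\tilde t^{-*}})^{-*}$ composed with $\delta_{\tilde t}$ by the identity on the dense range. I expect the main obstacle to lie exactly here: because $\Theta$ is a fiberwise isometry with dense range $\delta(\widetilde D_{\tilde t})$ in $D_{\zeta(\tilde t)}$ rather than a surjection, one must verify that the relevant operators extend continuously and that the density hypothesis suffices to identify the evaluation maps on the closure; Remark~\ref{morph2} (giving $(\delta_z)^{-*}=(\delta_{z^{-*}})^{-1}$ in the isometric case) is the tool that should make the cancellation rigorous, and the density then promotes equality on $\delta(\widetilde D_{\tilde t})$ to the asserted commutativity into $D_{\zeta(\tilde t)}$.
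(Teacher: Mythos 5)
Your part (i) is correct and is essentially the paper's own argument: both reduce the identity $\widetilde K=\Theta^*K$ to the family of scalar identities $\bigl(\widetilde{K}(s,t)\eta\mid\xi^{-*}\bigr)_{s,s^{-*}}=\bigl(K(\zeta(s),\zeta(t))\delta(\eta)\mid\delta(\xi^{-*})\bigr)_{\zeta(s),\zeta(s)^{-*}}$ by moving the quasi-adjoint across the pairing via \eqref{quasi-adjointable} and Definition~\ref{morph}(iii), and then identify these identities with the statement that \eqref{hom} holds with equality and $M=1$, i.e.\ with $\Theta\in\Hom(\widetilde K,K)$ together with the isometry of $\Hc^\Theta$ on the dense span of the $\widetilde K_{\tilde\xi}$. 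For the converse implication one needs polarization to pass from equality of the quadratic forms to equality of the individual sesquilinear terms, and then nondegeneracy of the strong duality pairings to pass to the operator identity; you gesture at both and both are routine.

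In part (ii) you take a slightly different route from the paper and leave the decisive step open. You reduce the claim to the cancellation $\delta_{\tilde t}\circ(\delta_{\tilde t^{-*}})^{-*}=\id$ on $D_{\zeta(\tilde t)}$, but the tool you cite for it, Remark~\ref{morph2}, assumes $\delta_z$ is fiberwise \emph{bijective}, which is not among the hypotheses here. The cancellation is nevertheless true and can be closed as follows: for $\xi\in\widetilde D_{\tilde t^{-*}}$ and $\eta'\in\widetilde D_{\tilde t}$, \eqref{quasi-adjointable} combined with the isometry hypothesis gives $\bigl(\xi\mid(\delta_{\tilde t^{-*}})^{-*}\delta_{\tilde t}\eta'\bigr)_{\tilde t^{-*},\tilde t}=\bigl(\delta(\xi)\mid\delta(\eta')\bigr)_{\zeta(\tilde t)^{-*},\zeta(\tilde t)}=(\xi\mid\eta')_{\tilde t^{-*},\tilde t}$, whence $(\delta_{\tilde t^{-*}})^{-*}\circ\delta_{\tilde t}=\id_{\widetilde D_{\tilde t}}$ by nondegeneracy of the pairing; consequently $\delta_{\tilde t}\circ(\delta_{\tilde t^{-*}})^{-*}$ is a bounded operator that restricts to the identity on the dense subspace $\delta(\widetilde D_{\tilde t})$ of $D_{\zeta(\tilde t)}$, hence equals the identity everywhere. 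The paper avoids this operator identity altogether: it pairs the two candidate values $K(\zeta(\tilde t),\zeta(\tilde s))\delta(\tilde\xi)$ and $\delta(\widetilde K_{\tilde\xi}(\tilde t))$ against an arbitrary element $\delta(\tilde\eta^{-*})$ with $\tilde\eta^{-*}\in\widetilde D_{\tilde t^{-*}}$, uses \eqref{star} and the isometry of the like-Hermitian structures to see that the two pairings coincide, and concludes from the density of $\delta(\widetilde D_{\tilde t^{-*}})$ in $D_{\zeta(\tilde t)^{-*}}$ together with the strong duality. The two arguments use the same ingredients, one testing against a dense family in the fiber and the other in the dual fiber; the paper's version is marginally more economical because it never has to manipulate the quasi-adjoint on the larger fiber, but yours goes through once the cancellation above is supplied.
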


\begin{proof}
(i) We have $\widetilde{K}=\Theta^*K$  
if and only if 
for all $s,t\in\widetilde{Z}$, 
\begin{equation}\label{pre_star}
(\forall s,t\in\widetilde{Z})\quad 
\widetilde{K}(s,t)
=(\delta_{s^{-*}})^{-*}\circ K(\zeta(s),\zeta(t))
\circ\delta_{t}.
\end{equation}
Now assume that $\Theta$ is a morphism. 
Since both $\widetilde{\Pi}$ and $\Pi$ are like-Hermitian bundles, 
condition~\eqref{pre_star} is further equivalent to 
\begin{equation}\label{star}
(\forall s,t\in\widetilde{Z})
(\forall\xi^{-*}\in\widetilde{D}_{s^{-*}},
\eta\in\widetilde{D}_{t})\quad 
\bigl(\widetilde{K}(s,t)\eta 
   \mid\xi^{-*}\bigr)_{s,s^{-*}}
=\bigl(K(\zeta(s),\zeta(t))\delta(\eta)
  \mid\delta(\xi^{-*})\bigr)_{\zeta(s),\zeta(s)^{-*}},
\end{equation}
and this is equivalent to the fact that 
$\Theta\in\Hom(\widetilde{K},K)$ 
and $\Hc^\Theta\colon\Hc^{\widetilde{K}}
\to\Hc^{K}$ is an isometry. 
In fact, as noticed after Definition \ref{pull}, in the setting of Definition~\ref{cat1} the latter condition 
means that in \eqref{hom} we should always have equality and $M=1$.

On the other hand if $\Theta$ is an antimorphism, 
then condition~\eqref{pre_star} is equivalent to 
\begin{equation}\label{star_anti}
(\forall\tilde{s},\tilde{t}\in\widetilde{Z})
(\forall\tilde{\xi}^{-*}\in\widetilde{D}_{\tilde{s}^{-*}},
\tilde{\eta}\in\widetilde{D}_{\tilde{t}})\quad 
\bigl(\widetilde{K}(\tilde{s},\tilde{t})\tilde{\eta} 
   \mid\tilde{\xi}^{-*}\bigr)_{\tilde{s},\tilde{s}^{-*}}
=\overline{\bigl(K(\zeta(\tilde{s}),\zeta(\tilde{t}))\delta(\tilde{\eta})
  \mid\delta(\tilde{\xi}^{-*})\bigr)}_{\zeta(\tilde{s}),\zeta(\tilde{s})^{-*}},
\end{equation}
which is in turn equivalent to the fact that 
$\Theta\in\Hom(\widetilde{K},K)$ 
and $\Hc^\Theta\colon\Hc^{\widetilde{K}}
\to\Hc^{K}$ is an isometry. 
(Note that both sides of \eqref{hom} are real numbers 
hence are equal to their own complex-conjugates.) 

(ii) Since $\spann\{\widetilde{K}_{\tilde{\xi}}\mid 
\tilde{\xi}\in\widetilde{D}\}$ is dense in $\Hc^{\widetilde{K}}$, 
it will be enough to check that 
\begin{equation}\label{sstar}
\bigl(
\Hc^\Theta(\widetilde{K}_{\tilde{\xi}})\bigr)(\zeta(\tilde{t}))
=\delta(\widetilde{K}_{\tilde{\xi}}(\tilde{t})).
\end{equation}
Now let us assume that $\Theta$ is a morphism. 
We have 
$\bigl(
\Hc^\Theta(\widetilde{K}_{\tilde{\xi}})\bigr)(\zeta(\tilde{t}))
=K_{\delta(\tilde{\xi})}(\zeta(\tilde{t}))
=K(\zeta(\tilde{t}),\zeta(\tilde{s}))\delta(\tilde{\xi})$, 
hence for arbitrary 
$\tilde{\eta}^{-*}\in\widetilde{D}_{\tilde{t}^{-*}}$ 
we get 
$$\begin{aligned}
\bigl(\bigl(
\Hc^\Theta(\widetilde{K}_{\tilde{\xi}})\bigr)(\zeta(\tilde{t})) 
\mid\delta(\tilde{\eta}^{-*})\bigr)_{\zeta(\tilde{t}),\zeta(\tilde{t})^{-*}} 
 &=
 \bigl(
K(\zeta(\tilde{t}),\zeta(\tilde{s}))\delta(\tilde{\xi})
\mid\delta(\tilde{\eta}^{-*})\bigr)_{\zeta(\tilde{t}),\zeta(\tilde{t})^{-*}} \\
 &\stackrel{\scriptstyle\eqref{star}}{=} 
 \bigl(
\widetilde{K}(\tilde{t},\tilde{s})\tilde{\xi}
\mid\tilde{\eta}^{-*}\bigr)_{\tilde{t},\tilde{t}^{-*}} \\
 &= 
 \bigl(
\widetilde{K}_{\tilde{\xi}}(\tilde{t})
\mid\tilde{\eta}^{-*}\bigr)_{\tilde{t},\tilde{t}^{-*}} \\
 &= 
 \bigl(
\delta(\widetilde{K}_{\tilde{\xi}}(\tilde{t}))
\mid\delta(\tilde{\eta}^{-*})\bigr)_{\zeta(\tilde{t}),\zeta(\tilde{t})^{-*}} 
\end{aligned}
$$
where the latter equality follows by the hypothesis that 
$\Theta$ is an isometry of like-Hermitian structures 
(see Definition~\ref{isometric}).  
Now we get equation~\eqref{sstar} since 
$\{\delta(\tilde{\eta}^{-*})\mid
\tilde{\eta}^{-*}\in\widetilde{D}_{\tilde{t}^{-*}}\}$ is dense in 
$D_{\zeta(\tilde{t})^{-*}}$ by hypothesis,  
while $\Pi$ is a like-Hermitian bundle. 

A similar reasoning works in the case when $\Theta$ is an antimorphism. 
\end{proof}

The following statement is a version of Proposition~III.3.3 in \cite{Ne00} 
in our framework. 
(See also the proof of Theorem~3 in \cite{FT99}.) It supplies conditions on a vector bundle $\Pi$ for an associated kernel $K$ to be the pullback of itself.

\begin{corollary}\label{cat6}
Assume that $Z$ is a Banach manifold with an involutive 
diffeomorphism $Z\to Z$, $z\mapsto z^{-*}$, 
$\Pi\colon D\to Z$ is a like-Hermitian vector bundle, 
and $K$ is a reproducing $(-*)$-kernel on the bundle $\Pi$. 

Now let $\tau\colon D\to D$ be a smooth map such that $\tau^2=\id_D$, and 
for all $z\in Z$ we have $\tau(D_z)\subseteq D_{z^{-*}}$, 
$\tau|_{D_z}\colon D_z\to D_{z^{-*}}$ is antilinear,   
and 
\begin{equation}\label{anti}
(\forall s\in Z)(\forall\xi\in D_s,\xi^{-*}\in D_{s^{-*}})\qquad 
(\tau(\xi)\mid\tau(\xi^{-*}))_{s^{-*},s}
=(\xi^{-*}\mid\xi)_{s^{-*},s}.
\end{equation}
Then we have 
\begin{equation}\label{symm}
(\forall s,t\in Z)\quad K(s,t)
=\tau^{-1}\circ K(s^{-*},t^{-*})\circ\tau|_{D_t},
\end{equation}
if and only if there exists an involutive antilinear isometry 
$\overline{\tau}\colon\Hc^K\to\Hc^K$ 
such that for all $\xi\in D$ we have
$\overline{\tau}(K_\xi)=K_{\tau(\xi)}$. 
If this is the case, then  
$$
(\forall F\in\Hc^K)(\forall t\in Z)\quad 
\bigl(\overline{\tau}(F)\bigr)(t)
=\tau(F(t^{-*})).
$$ 
\end{corollary}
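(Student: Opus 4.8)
The plan is to funnel both implications through a single operator identity equivalent to \eqref{symm}, and then to recognize the isometry of $\overline{\tau}$ as that same identity transported to the reproducing kernel Hilbert space $\Hc^K$. Since $\tau^2=\id_D$, the restrictions $\tau|_{D_t}\colon D_t\to D_{t^{-*}}$ and $\tau|_{D_{t^{-*}}}\colon D_{t^{-*}}\to D_t$ are mutually inverse antilinear bijections and $\tau^{-1}=\tau$. Composing \eqref{symm} on the right with $\tau|_{D_{t^{-*}}}$ and then replacing $t$ by $t^{-*}$ shows, by a purely algebraic manipulation requiring no nondegeneracy, that \eqref{symm} is equivalent to the intertwining relation
\begin{equation}\tag{$\dagger$}
\tau\circ K(s^{-*},t)=K(s,t^{-*})\circ\tau|_{D_t}\qquad(\forall s,t\in Z).
\end{equation}

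Next I would build the bridge to $\Hc^K$. Fix $s,t\in Z$, $\xi\in D_s$, $\eta\in D_t$, so that $\tau(\xi)\in D_{s^{-*}}$ and $\tau(\eta)\in D_{t^{-*}}$. Using the defining scalar product \eqref{prods} I would compute, on the one hand,
\[
(K_{\tau(\eta)}\mid K_{\tau(\xi)})_{\Hc}=(K(s,t^{-*})\tau(\eta)\mid\tau(\xi))_{s,s^{-*}},
\]
and, on the other hand, after one use of property~(b) of Definition~\ref{like},
\[
\overline{(K_\eta\mid K_\xi)_{\Hc}}=(\xi\mid K(s^{-*},t)\eta)_{s,s^{-*}}.
\]
The crucial step is to remove the two factors of $\tau$ from the first expression: writing $K(s,t^{-*})\tau(\eta)=\tau(\tau(K(s,t^{-*})\tau(\eta)))$ and applying \eqref{anti} in the form obtained from it by complex conjugation and property~(b) gives
\[
(K(s,t^{-*})\tau(\eta)\mid\tau(\xi))_{s,s^{-*}}=(\xi\mid\tau(K(s,t^{-*})\tau(\eta)))_{s,s^{-*}}.
\]
Since $(\cdot\mid\cdot)_{s,s^{-*}}$ is a strong duality pairing (Remark~\ref{sesqui}), an element of $D_{s^{-*}}$ is determined by its pairings against all $\xi\in D_s$; hence the antilinear isometry relation $(K_{\tau(\eta)}\mid K_{\tau(\xi)})_{\Hc}=\overline{(K_\eta\mid K_\xi)_{\Hc}}$ (for all $\xi,\eta$) is equivalent to $\tau(K(s,t^{-*})\tau(\eta))=K(s^{-*},t)\eta$ for all $t$ and $\eta\in D_t$, which is precisely $(\dagger)$ after applying $\tau$. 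I expect this bookkeeping to be the main obstacle: each inserted $\tau$ conjugates a scalar, so the antilinearity together with the convention~(b) must be tracked with care to ensure that the outcome is genuinely \eqref{symm} rather than its complex conjugate.

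With this equivalence the two implications are immediate. If \eqref{symm} holds, the displayed inner-product identity says that the antilinear assignment $K_\xi\mapsto K_{\tau(\xi)}$ preserves the Gram matrices up to conjugation; it is therefore well defined on $\Hc^K_0=\spann\{K_\xi\mid\xi\in D\}$ and extends to an antilinear isometry $\overline{\tau}$ of $\Hc^K$, which is involutive because $\tau^2=\id_D$ yields $\overline{\tau}^2(K_\xi)=K_{\tau^2(\xi)}=K_\xi$. Conversely, if such an $\overline{\tau}$ exists, then its isometry property read on the generators $K_\eta,K_\xi$ is exactly the relation above, so $(\dagger)$ and hence \eqref{symm} hold.

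It remains to verify the evaluation formula. I would first check it on generators: for $\xi\in D_s$ one has $(\overline{\tau}(K_\xi))(t)=K_{\tau(\xi)}(t)=K(t,s^{-*})\tau(\xi)$, while $\tau(K_\xi(t^{-*}))=\tau(K(t^{-*},s)\xi)$, and these coincide by $(\dagger)$ with the roles of $s$ and $t$ exchanged. For general $F\in\Hc^K$ I would pass to the limit using the continuity of the evaluation maps $\ev_t^\iota\colon\Hc^K\to D$ from Remark~\ref{kapi}: both $F\mapsto(\overline{\tau}(F))(t)$ and $F\mapsto\tau(F(t^{-*}))$ are continuous antilinear maps of $\Hc^K$ into $D_t$ that agree on the dense subspace $\Hc^K_0$, hence everywhere.
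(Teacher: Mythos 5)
Your proof is correct, but it takes a different route from the paper's. The paper's own proof is a two-line reduction: it observes that the pair $\Theta=(\tau,\,z\mapsto z^{-*})$ is an isometric, fiberwise bijective antimorphism of $\Pi$ into itself (this is exactly what \eqref{anti} together with $\tau^2=\id_D$ provides, via Remark~\ref{morph2}), that condition \eqref{symm} is precisely the pull-back identity $\Theta^*K=K$, and then it invokes Proposition~\ref{cat5}(i) and Remark~\ref{cat2} to get $\overline{\tau}:=\Hc^\Theta$, with Proposition~\ref{cat5}(ii) supplying the evaluation formula. What you do instead is inline, from scratch, the content of Proposition~\ref{cat5} in this special case: your intertwining relation $(\dagger)$ is the unpacked form of $\Theta^*K=K$, your Gram-matrix identity $(K_{\tau(\eta)}\mid K_{\tau(\xi)})_{\Hc}=\overline{(K_\eta\mid K_\xi)}_{\Hc}$ is the statement that $\Hc^\Theta$ is an antilinear isometry (the paper's condition \eqref{star_anti}), and your density argument for the evaluation formula reproduces the proof of Proposition~\ref{cat5}(ii). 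All the individual steps check out: the passage from \eqref{symm} to $(\dagger)$ is a correct reversible manipulation using only $\tau^{-1}=\tau$; the identity $(\zeta\mid\tau(\xi))_{s,s^{-*}}=(\xi\mid\tau(\zeta))_{s,s^{-*}}$ does follow from \eqref{anti}, property~(b) of Definition~\ref{like} and $\tau^2=\id_D$; and the nondegeneracy you need is exactly the second half of Remark~\ref{sesqui}. The paper's approach buys brevity and makes the structural point that the corollary is an instance of the general pull-back characterization; yours buys self-containedness and makes the sign/conjugation bookkeeping fully explicit, which is genuinely the delicate part here. Either is acceptable; if you keep your version, it would be worth a sentence noting that $(\dagger)$ is nothing but $\Theta^*K=K$ for the antimorphism $\Theta=(\tau,{}^{-*})$, so the reader sees the connection to Proposition~\ref{cat5}.
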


\begin{proof}
First note that \eqref{symm}  
is equivalent to the fact that $\Theta^*K=K$, where 
$\Theta$ is the antimorphism of the bundle $\Pi$ into itself 
defined by the pair of mappings $\tau$ and $z\mapsto z^{-*}$. 
Now the first assertion follows by Proposition~\ref{cat5}(i) 
and Remark~\ref{cat2} with $\overline{\tau}:=\Hc^\Theta$.  

To prove the second assertion we can use Proposition~\ref{cat5}(ii). 
\end{proof}

The following theorem shows how objects of the category $\HLH$ 
can be obtained as bundle pull-backs of objects of the category $\Rg$. 
Further,  this theorem points out the structural role of 
the transfer mappings $\Rc$ within these categories.

\begin{theorem}\label{compare}
Assume the setting of {\rm Definition~\ref{relationship}}. 
Let $z_0\in Z$ such that $z_0^{-*}=z_0$. 
Assume that the isotropy group 
$G_0:=\{u\in G\mid\nu(u,z_0)=z_0\}$
is a Banach-Lie subgroup of $G$ and in addition assume that the orbit 
$\Oc_{z_0}=\{\nu(u,z_0)\mid u\in G\}$ of $z_0$
is a submanifold of $Z$, 
and denote by $i_0\colon\Oc_{z_0}\hookrightarrow Z$ the corresponding 
embedding map. 
Let $i_0^*(D)$ be the pull-back manifold of $\Pi\colon D\to Z$ through $i_0$, that is, 
$i_0^*(D):=\{(\xi,t)\in D\times\Oc_{z_0}\mid\pi(\xi)=t\}$.
Then there exists a closed subspace $\Hc_0$ of $\Hc$ 
such that the following assertions hold: 
\begin{itemize}
\item[{\rm(i)}] For every $u\in G_0$ we have $\pi(u)\Hc_0\subseteq\Hc_0$. 
\item[{\rm(ii)}] Denote by $\pi\colon G_0\to\Bc(\Hc_0)$, 
$u\mapsto\pi(u)|_{\Hc_0}$, 
the corresponding representation of $G_0$ on $\Hc_0$, 
by $\Pi_0\colon D_0\to G/G_0$ the like-Hermitian vector bundle 
associated with the data $(\pi,\pi_0,P_{\Hc_0})$, 
and by $\Rc_0\colon D_0\to\Hc$ the transfer mapping associated with 
the data $(\pi,\pi_0,P_{\Hc_0})$ (as in subsection~\ref{subsect1.3}). 
Then there exists a biholmorphic bijective $G$-equivariant map 
$\theta\colon D_0\to i_0^*(D)$ such that 
$\theta$ sets up an isometric isomorphism of like-Hermitian 
vector bundles over $G/G_0\simeq\Oc_{z_0}$ and the diagram 
$$
\begin{CD}
D_0 @>{\theta}>> i_0^*(D) \\
@V{\Rc_0}VV @VV{\Rc|_{i_0^*(D)}}V \\
\Hc @>{\id_{\Hc}}>> \Hc
\end{CD}$$
is commutative.
\end{itemize} 
\end{theorem}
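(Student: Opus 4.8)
The plan is to produce the closed subspace $\Hc_0$ as the range of the transfer map over the fixed point $z_0$, to build the isomorphism $\theta$ fibrewise out of the group action, and to transport the like-Hermitian structure through the fact that both bundles carry transfer maps into the \emph{same} Hilbert space $\Hc$. First I would set $\Hc_0:=\Rc_{z_0}(D_{z_0})$ and check it is closed. Since $z_0^{-*}=z_0$, condition (ii) of Definition~\ref{transrel} reads $(\xi\mid\eta)_{z_0,z_0}=(\Rc_{z_0}\xi\mid\Rc_{z_0}\eta)_{\Hc}$, so $\Rc_{z_0}$ carries the pairing to the inner product of $\Hc$. The key observation is that $\Rc_{z_0}$ is then automatically bounded below: because $(\cdot\mid\cdot)_{z_0,z_0}$ is a strong duality pairing, the map $\xi\mapsto(\xi\mid\cdot)_{z_0,z_0}$ is a Banach-space isomorphism $D_{z_0}\to(\overline{D_{z_0}})^{*}$ (Remark~\ref{sesqui}), whose lower bound combined with Cauchy--Schwarz in $\Hc$ forces $\|\xi\|_{D_{z_0}}\le C\,\|\Rc_{z_0}\xi\|_{\Hc}$. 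Hence $\Rc_{z_0}\colon D_{z_0}\to\Hc_0$ is a topological isomorphism onto the closed subspace $\Hc_0$.

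Next I would establish (i) and assemble the data. For $u\in G_0$ the commuting square of Definition~\ref{transrel}(iii), together with $\nu(u,z_0)=z_0$, yields $\pi(u)\circ\Rc_{z_0}=\Rc_{z_0}\circ\mu(u,\cdot)|_{D_{z_0}}$, so $\pi(u)\Hc_0\subseteq\Hc_0$ by continuity of $\pi(u)$; this is (i). One checks that $G_0$ is involutive (from Definition~\ref{relationship}(ii) and $z_0^{-*}=z_0$ one gets $u^{-*}\in G_0$ whenever $u\in G_0$), so $\pi_0:=\pi|_{G_0}$ is a $*$-representation on $\Hc_0$ and $(\pi,\pi_0,P_{\Hc_0})$ is a bona fide object of $\HLH$, giving $\Pi_0\colon D_0=G\times_{G_0}\Hc_0\to G/G_0$ with transfer map $\Rc_0([(u,f)])=\pi(u)f$; the orbit map $uG_0\mapsto\nu(u,z_0)$ is the diffeomorphism $G/G_0\simeq\Oc_{z_0}$.

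Then I would define $\theta([(u,f)]):=(\mu(u,\Rc_{z_0}^{-1}f),\nu(u,z_0))$, which is legitimate precisely because $\Rc_{z_0}^{-1}$ is now defined on all of $\Hc_0$. Well-definedness on equivalence classes uses the intertwining relation of the previous paragraph for $w\in G_0$ together with the action axioms; $G$-equivariance is immediate from $\mu(vu,\cdot)=\mu(v,\mu(u,\cdot))$ and the analogue for $\nu$; fibrewise bijectivity holds because $\Rc_{z_0}^{-1}$ and $\mu(u,\cdot)|_{D_{z_0}}$ (with inverse $\mu(u^{-1},\cdot)$) are bijections onto $D_{z_0}$ and $D_{\nu(u,z_0)}$; and biholomorphy follows by expressing $\theta$ and $\theta^{-1}$ through local holomorphic sections of $G\to G/G_0$ and the holomorphic maps $\mu,\nu,\Rc_{z_0}^{\pm1}$. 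The square in (ii) commutes since Definition~\ref{transrel}(iii) gives $\Rc(\mu(u,\Rc_{z_0}^{-1}f))=\pi(u)f=\Rc_0([(u,f)])$. Finally, since both $\Rc_0$ and $\Rc|_{i_0^{*}(D)}$ preserve the like-Hermitian pairings (Definition~\ref{transrel}(ii)) and agree through $\theta$, the map $\theta$ preserves those pairings and is therefore an isometric isomorphism of like-Hermitian bundles.

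The main obstacle is the closed-range step in the first paragraph: a priori $D_{z_0}$ is only a Banach fibre and $\Rc_{z_0}$ merely bounded and injective, so without that estimate $\Hc_0$ would be a proper completion of $\Rc_{z_0}(D_{z_0})$, and $\theta$ would be only densely defined with no global inverse. The strong-duality-pairing hypothesis is exactly what supplies the lower bound, and hence the global existence of $\theta^{-1}$; once this is secured, the equivariance, the two commuting squares, and the holomorphy via local sections are all routine verifications.
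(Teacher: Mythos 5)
Your proof is correct and follows essentially the same route as the paper: the same choice $\Hc_0=\Ran(\Rc_{z_0})$, the same formula $\theta([(u,f)])=\mu(u,\Rc_{z_0}^{-1}f)$ built from the well-defined map $\widetilde{\theta}$ on $G\times\Hc_0$, and the same verifications of well-definedness, $G$-equivariance, commutativity of the square via $\Rc\circ\theta=\Rc_0$, isometry from the fact that both transfer maps preserve the pairings, and holomorphy through the submersion/local-section argument. The one point where you go beyond the paper's written proof is the explicit argument that $\Ran(\Rc_{z_0})$ is closed (the strong duality pairing at $z_0=z_0^{-*}$ plus Cauchy--Schwarz forces $\Vert\xi\Vert_{D_{z_0}}\le C\Vert\Rc_{z_0}\xi\Vert_{\Hc}$), a fact the paper takes for granted but which is indeed needed for $\Rc_{z_0}^{-1}$ to be defined on all of $\Hc_0$.
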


\begin{proof}
We shall take $\Hc_0:=\Ran(\Rc_{z_0})\subseteq\Hc$. 

For arbitrary $u\in G_0$ we have $\nu(u,z_0)=z_0$. 
Then property~(iii) in Definition~\ref{relationship} shows that 
we have a commutative diagram 
$$
\begin{CD}
D_{z_0} @>{\mu(u,\cdot)|_{D_{z_0}}}>> D_{z_0} \\
@V{\Rc_{z_0}}VV @VV{\Rc_{z_0}}V \\
\Hc @>{\pi(u)}>> \Hc
\end{CD}
$$
whence $\pi(u)(\Rc_{z_0}(D_{z_0}))\subseteq \Rc_{z_0}(D_{z_0})$, 
that is, $\pi(u)\Hc_0\subseteq\Hc_0$. 
Thus $\Hc_0$ has the desired property~(i). 

To prove~(ii) we first note that, since $G_0$ is a Banach-Lie subgroup of $Z$, 
it follows that the $G$-orbit $\Oc_{z_0}\simeq G/G_0$ 
has a natural structure of Banach homogeneous space of $G$ 
(in the sense of \cite{Rae77}) such that the inclusion map 
$i_0\colon\Oc_{z_0}\hookrightarrow Z$ is an embedding. 

Next define 
\begin{equation}\label{tilde}
\widetilde{\theta}\colon G\times\Hc_0\to D,\quad 
\widetilde{\theta}(u,f):=
\mu(u,\Rc_{z_0}^{-1}(f))=
\Phi_{\nu(u,z_0)}^{-1}(\pi(u)f)
\in D_{\nu(u,z_0)}\subseteq D,
\end{equation}
where the equality follows by property~(iii) in Definition~\ref{relationship}. 
Then for all $u\in G$, $u_0\in G_0$, and $f\in\Hc_0$ 
we have 
$\nu(uu_0^{-1},z_0)=\nu(u,z_0)$ and 
$$
\widetilde{\theta}(uu_0^{-1},\pi(u_0)f)
=\Rc_{\nu(uu_0^{-1},z_0)}^{-1}(\pi(uu_0^{-1})\pi(u_0)f) 
=\Rc_{\nu(u,z_0)}^{-1}(\pi(u)f)
=\widetilde{\theta}(u,f).
$$
In particular there exists a well defined map 
$$
\theta\colon G\times_{G_0}\Hc_0\to D,\quad 
[(u,f)]\mapsto\Rc_{\nu(u,z_0)}^{-1}(\pi(u)f).
$$
This mapping is $G$-equivariant with respect to 
the actions of $G$ on $G\times_{G_0}\Hc$ and on $D$ 
since so is $\widetilde{\theta}$:
for all $u,v\in G$ and $f\in\Hc_0$ 
we have 
$$
\widetilde{\theta}(uv,f)
=\Rc_{\nu(uv,z_0)}^{-1}(\pi(uv)f)  
=\Rc_{\nu(u,\nu(v,z_0))}^{-1}(\pi(u)\pi(v)f) 
=\mu\bigl(u,\Rc_{\nu(v,z_0)}^{-1}(\pi(v)f)\bigr) 
=\mu(u,\widetilde{\theta}(v,f)),
$$
where the second equality follows since $\nu\colon G\times Z\to Z$ is a group action, 
while the third equality follows by property~(iii) in Definition~\ref{relationship}. 
Besides, it is clear that $\theta$ is a bijection onto $i_0^*(D)$ and 
a fiberwise isomorphism. 
Also it is clear from the above construction of $\theta$ 
and from the definition 
of the transfer mapping $\Rc_0\colon D_0\to\Hc$ associated with 
the data $(\pi,\pi_0,P_{\Hc_0})$ that $\Rc\circ\theta=\Rc_0$, 
that is, the diagram in the statement is indeed commutative. 
In addition, since both mappings $\Rc$ and $\Rc_0$ 
are fiberwise ``isometric'' 
(see property~(ii) in Definition~\ref{relationship}), 
it follows by $\Rc\circ\theta=\Rc_0$ that 
$\theta$ gives an isometric morphism of like-$(-*)$-Hermitian 
bundles over $G/G_0\simeq\Oc_{z_0}$. 

Now we still have to prove that the map 
$\theta\colon D_0=G\times_{G_0}\Hc_0\to i_0^*(D)\subseteq D$ is biholomorphic. 
We first show that it is holomorphic. 
Since $\Oc_{z_0}$ is a submanifold of $Z$, it 
follows that $i_0^*(D)$ is a submanifold of $D$ 
(see for instance the comments after Proposition~1.4 
in Chapter~III of \cite{La01}). 

Thus it will be enough to show that $\theta\colon G\times_{G_0}\Hc_0\to D$ 
is holomorphic. 
And this property is equivalent (by Corollary~8.3(ii) in \cite{Up85}) 
to the fact that 
the mapping $\widetilde{\theta}\colon G\times\Hc_0\to D$ 
is holomorphic, 
since the natural projection $G\times\Hc_0\to G\times_{G_0}\Hc_0$ 
is a holomorphic submersion. 
Now the fact that $\widetilde{\theta}\colon G\times\Hc_0\to D$ 
is a holomorphic map follows by the first formula 
in its definition~\eqref{tilde}, since the group action 
$\alpha\colon G\times D\to D$ is holomorphic. 

Consequently the mapping $\theta\colon G\times_{G_0}\Hc_0\to i_0^*(D)$ 
is holomorphic. 
Then the fact that the inverse 
$\theta^{-1}\colon i_0^*(D)\to G\times_{G_0}\Hc_0$ 
is also holomorphic follows by 
general arguments in view of the following facts
(the first and the second of them have been already established, 
and the third one is well-known): 
Both $G\times_{G_0}\Hc_0$ and 
$i_0^*(D)$ are locally trivial holomorphic vector bundles; 
we have a commutative diagram 
$$
\begin{CD}
G\times_{G_0}\Hc_0 @>{\theta}>> i_0^*(D) \\
@VVV @VVV \\
G/G_0 @>>> \Oc_{z_0}
\end{CD}
$$ 
where the bottom arrow is the biholomorphic map 
$G/G_0\simeq\Oc_{z_0}$ induced by the action $\nu\colon G\times Z\to Z$, 
and the vertical arrows are the projections of the corresponding holomorphic 
like-$(-*)$-Hermitian vector bundles; 
the inversion mapping is holomorphic on the open set of invertible 
operators on a complex Hilbert space. 
Now the proof is finished.   
\end{proof}

%\begin{remark}
%\normalfont
The above result can be prolonged, to include morphisms of the respective categories, as follows.
Let $(\widetilde\Pi,\widetilde\pi;\widetilde\Rc)$,  $(\Pi,\pi;\Rc)$ be two objects in the category $\Rg$, and let $(\Theta;\alpha,L)$ be a morphism between them, with $\Theta=(\delta,\zeta)$. 
Suppose that both objects satisfy the assumptions of the previous theorem for $z_0\in\widetilde Z$ and 
$\zeta(z_0)\in Z$, with $\widetilde G_0$, $G_0$ and $\Oc_{z_0}$, $\Oc_{\zeta(z_0)}$ 
the isotropy subgroups and orbits, respectively.

Let $(\tilde\pi,\tilde\pi_0,P_{\widetilde\Hc_0})$, $(\pi,\pi_0,P_{\Hc_0})$ denote 
the data objects in $\HLH$ constructed out of 
$(\widetilde\Pi,\widetilde\pi;\widetilde\Rc)$,  $(\Pi,\pi;\Rc)$, respectively, 
with bundle maps $(\tilde\theta,\tilde\iota)$, $(\theta,\iota)$, as in part (ii) of Theorem~\ref{compare}. 
Put 
$\widetilde P_0=P_{\widetilde\Hc_0}$, $P_0=P_{\Hc_0}$.

\begin{proposition}\label{pullmorphi}
In the above setting, one has
$$ 
\alpha(\widetilde G_0)\subseteq G_0 \quad\text{and}\quad L(\widetilde\Hc_0)\subseteq\Hc_0.
$$
Hence $(\alpha_0,L_0):=(\alpha\mid_{\widetilde G_0},L\mid_{\widetilde\Hc_0})$ is a morphism between $(\tilde\pi,\tilde\pi_0,\widetilde P_0)$ and $(\pi,\pi_0,P_0)$. Moreover, 
\begin{itemize}
\item[{\rm(1)}] $\theta\circ[\alpha_0\times L_0]=\delta\circ\tilde\theta$.
\item[{\rm(2)}] $\Pi\circ[\alpha_0\times L_0]=\alpha_q\circ\widetilde\Pi_0$.
\item[{\rm(3)}] $\iota\circ\alpha_q=\zeta\circ\tilde\iota$.
\end{itemize}
\end{proposition}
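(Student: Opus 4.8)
The plan is to prove every assertion by direct diagram chasing, using only the defining compatibility conditions of the morphism $(\Theta;\alpha,L)$ in $\Rg$ together with the explicit descriptions of $\theta,\tilde\theta,\iota,\tilde\iota,\alpha_q$ supplied by Theorem~\ref{compare} and subsection~\ref{subsect1.3}. Writing the base points as $z_0\in\widetilde Z$ and $\zeta(z_0)\in Z$, recall from the construction that $\widetilde\Hc_0=\Ran(\widetilde\Rc_{z_0})$, $\Hc_0=\Ran(\Rc_{\zeta(z_0)})$, that the transfer maps $\Rc_s,\widetilde\Rc_{\tilde s}$ are fiberwise injective, and that
$$
\tilde\theta([(u,f)])=\widetilde\Rc_{\tilde\nu(u,z_0)}^{-1}(\tilde\pi(u)f),\qquad
\theta([(u,f)])=\Rc_{\nu(u,\zeta(z_0))}^{-1}(\pi(u)f),
$$
while $\alpha_q(u\widetilde G_0)=\alpha(u)G_0$, $\tilde\iota(u\widetilde G_0)=\tilde\nu(u,z_0)$, and $\iota(uG_0)=\nu(u,\zeta(z_0))$. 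Throughout, the single identity $\zeta(\tilde\nu(u,z_0))=\nu(\alpha(u),\zeta(z_0))$, which is the instance at the base point of condition (b) in Definition~\ref{relationship}, will absorb all the bookkeeping of base points and fibers.

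First I would establish the two inclusions. For $u\in\widetilde G_0$ that identity gives $\nu(\alpha(u),\zeta(z_0))=\zeta(\tilde\nu(u,z_0))=\zeta(z_0)$, so $\alpha(u)\in G_0$, whence $\alpha(\widetilde G_0)\subseteq G_0$. For the second inclusion take $g\in\widetilde\Hc_0$, say $g=\widetilde\Rc(\tilde\xi)$ with $\tilde\xi\in\widetilde D_{z_0}$. Since $\Pi(\delta(\tilde\xi))=\zeta(\widetilde\Pi(\tilde\xi))=\zeta(z_0)$ we have $\delta(\tilde\xi)\in D_{\zeta(z_0)}$, and condition (d) yields $L(g)=L(\widetilde\Rc(\tilde\xi))=\Rc(\delta(\tilde\xi))\in\Ran(\Rc_{\zeta(z_0)})=\Hc_0$, so $L(\widetilde\Hc_0)\subseteq\Hc_0$. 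Combined with the facts, already part of the $\Rg$ morphism, that $\alpha$ is an involution-preserving holomorphic homomorphism and that $L$ is a bounded intertwiner (condition (c)), these two inclusions are precisely requirements (i)--(ii) for $(\alpha_0,L_0)$ to be a morphism in $\HLH$; in particular they make $[\alpha_0\times L_0]$ well defined.

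Next I would verify the three equalities. Items (2) and (3) are immediate unwindings: for (2), both $\Pi([\alpha_0\times L_0]([(u,f)]))$ and $\alpha_q(\widetilde\Pi_0([(u,f)]))$ equal $\alpha(u)G_0$; for (3), both $\iota(\alpha_q(u\widetilde G_0))$ and $\zeta(\tilde\iota(u\widetilde G_0))$ equal $\nu(\alpha(u),\zeta(z_0))$ by the base-point identity. The substantive point is (1). Evaluating on a class $[(u,f)]$, the left side is $\theta([(\alpha(u),L(f))])=\Rc_{s}^{-1}(\pi(\alpha(u))L(f))$ with $s:=\nu(\alpha(u),\zeta(z_0))=\zeta(\tilde s)$, $\tilde s:=\tilde\nu(u,z_0)$, while the right side is $\delta(\tilde\xi)$ with $\tilde\xi:=\widetilde\Rc_{\tilde s}^{-1}(\tilde\pi(u)f)\in\widetilde D_{\tilde s}$. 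Since $\delta(\tilde\xi)\in D_{\zeta(\tilde s)}=D_s$ and $\Rc_s$ is injective, it suffices to check $\Rc_s(\delta(\tilde\xi))=\pi(\alpha(u))L(f)$. Condition (d) gives $\Rc_s(\delta(\tilde\xi))=\Rc(\delta(\tilde\xi))=L(\widetilde\Rc(\tilde\xi))=L(\tilde\pi(u)f)$, and condition (c) turns the right-hand side into $\pi(\alpha(u))L(f)$, as required; this proves (1).

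The only genuine care needed---and the closest thing to an obstacle---is the bookkeeping of fibers and base points: one must consistently track that $\delta$ carries the fiber $\widetilde D_{\tilde s}$ into $D_{\zeta(\tilde s)}$, that $\zeta(\tilde s)=s$, and that the formal inverses $\Rc_s^{-1}$ and $\widetilde\Rc_{\tilde s}^{-1}$ appearing in $\theta,\tilde\theta$ are legitimate, which they are by the fiberwise injectivity recorded in Definition~\ref{transrel}(ii) and by the construction in Theorem~\ref{compare}. Once this matching is in place, each identity collapses to a one-line application of (b), (c), or (d), so no further computation is needed.
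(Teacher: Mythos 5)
Your proposal is correct and follows essentially the same route as the paper's proof: both inclusions come from condition (b) at the base point and condition (d) restricted to the fiber over $z_0$, item (1) is the same chain of substitutions using (c) and (d) (you merely reorganize it by invoking injectivity of $\Rc_s$ to compare images in $\Hc$ rather than chaining through $\Rc_s^{-1}$ directly), and (2) and (3) are the same immediate unwindings. No gaps.
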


\begin{proof}  The inclusion $\alpha(\widetilde G_0)\subseteq G_0$ is equivalent to have 
$\nu(\alpha(u),\zeta(z_0))=\zeta(z_0)$ whenever $u\in\widetilde G$ satisfies 
$\tilde\nu(u,z_0)=z_0$, and this is an easy consequence of the fact that 
$\zeta\circ\tilde\nu=\nu\circ(\alpha\times\zeta)$, see assumption (b) prior to subsection~\ref{subsect1.3}.

To see the second inclusion recall that $\widetilde\Hc_0=\widetilde\Rc_{z_0}(\widetilde D_{z_0})$ and 
$\Hc_0=\Rc_{\zeta(z_0)}(D_{\zeta(z_0)})$ (note that $\zeta(z_0)=\zeta(z_0^{-*})=\zeta(z_0)^{-*}$). 
By assumption (d) prior to subsection~\ref{subsect1.3}, we have 
$L_0\circ\widetilde\Rc_{z_0}=\Rc_{\zeta(z_0)}\circ\delta\mid_{\widetilde D_{z_0}}$, so the inclusion 
$L(\widetilde\Hc_0)\subseteq\Hc_0$ holds.

Then we are going to prove (1), (2) and (3) of the statement.

(1) Take $[(u,f)]\in\widetilde G\times_{\widetilde G_0}\widetilde\Hc_0$. Then 
$\pi(\alpha_0(u))L_0(f)=L\tilde\pi(u)f$ by (c) prior to subsection~\ref{subsect1.3}.

Thus
$$
\begin{aligned}
(\theta\circ[\alpha_0\times L_0])[(u,f)]&=\theta[(\alpha_0(u),L_0(f))] \\
&=\Rc_{\nu(\alpha(u),\zeta(z_0))}^{-1}(\pi(\alpha(u))L_0(f))
=\Rc_{\nu(\alpha(u),\zeta(z_0))}^{-1}(L\tilde\pi(u)f) \\ 
&=\Rc_{\nu(\alpha(u),\zeta(z_0))}^{-1}
((L\circ\widetilde\Rc_{\tilde\nu(u,z_0)})(\widetilde\Rc_{\tilde\nu(u,z_0)}^{-1}\tilde\pi(u)f) \\ 
&=\Rc_{\nu(\alpha(u),\zeta(z_0))}^{-1}
(\Rc_{\zeta(\tilde\nu(u,z_0))}\circ\delta)(\widetilde\Rc_{\tilde\nu(u,z_0)}^{-1}\tilde\pi(u)f) \\ 
&=(\delta\circ\widetilde\Rc_{\tilde\nu(u,z_0)}^{-1})(\tilde\pi(u)f)
=(\delta\circ\tilde\theta)[(u,f)]
\end{aligned}
$$
where we have used (d) prior to subsection~\ref{subsect1.3} in the fifth equality, 
and (b) prior to subsection~\ref{subsect1.3} in the next-to-last equality.

(2) This is immediate.

(3) This is again a consequence of (b) prior to subsection~\ref{subsect1.3}.
\end{proof}

%\medskip
As it has been seen before, there are canonical reproducing $(-*)$-kernels $K^{\pi}$ and $K^{\Rc}$ associated with the vector bundles $\Pi_0\colon D_0\to\Oc_{z_0}$, $\Pi\colon D\to\Oc$, respectively,  of 
Theorem \ref{compare}. 
Recall that they are defined by
$$
K^{\Rc}(s,t):=(\Rc_{s^{-*}})^{-*}\circ\Rc_t\qquad\hbox{ for }\ s,t\in Z
$$
and
$$
K^{\pi}(s,t):=(\Rc^0_{s^{-*}})^{-*}\circ\Rc_t^0
$$
or, equivalently (according to Remark \ref{kapi}),
$$
K^{\pi}(s,t):=[(u,P_0(\pi(u)^{-1}\Rc^0(\cdot)))]
$$
for every $s=uG_0, t=vG_0\in G/G_0$, where $\Rc^0$ is the transfer mapping associated with $\Pi_0$. 

\begin{corollary} 
The reproducing kernel $K^{\pi}$ is the pullback kernel of 
$K^{\Rc}$ corresponding to the bundle morphism $\Theta=(\theta,\iota)$ 
obtained in Theorem \ref{compare}, that is, 
$$ 
K^{\pi}(s,t)=(\theta_{s^{-*}})^{-*}\circ K^{\Rc}(\iota(s),\iota(t))\circ\theta_t
\qquad\hbox{ for every }\ s,t\in\Oc_{z_0}\equiv G/G_0.
$$
\end{corollary}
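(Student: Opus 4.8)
The plan is to read the asserted identity as the single statement $K^{\pi}=\Theta^{*}K^{\Rc}$ for the bundle morphism $\Theta=(\theta,\iota)$ furnished by Theorem~\ref{compare}, and then to verify it by unwinding the transfer-map descriptions of the two kernels. By Theorem~\ref{compare} the map $\theta$ is an isometric isomorphism of like-Hermitian vector bundles which is fiberwise bijective, so Remark~\ref{morph2} gives $(\theta_{s^{-*}})^{-*}=(\theta_{s})^{-1}$, and Remark~\ref{isometry} reduces the pull-back formula to
$$
\Theta^{*}K^{\Rc}(s,t)=(\theta_{s})^{-1}\circ K^{\Rc}(\iota(s),\iota(t))\circ\theta_{t}
\qquad (s,t\in\Oc_{z_0}).
$$
Thus it suffices to show that the right-hand side equals $(\Rc^{0}_{s^{-*}})^{-*}\circ\Rc^{0}_{t}=K^{\pi}(s,t)$.

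First I would expand $K^{\Rc}(\iota(s),\iota(t))=(\Rc_{\iota(s)^{-*}})^{-*}\circ\Rc_{\iota(t)}$ according to Proposition~\ref{posifunct}(iii). Since $\iota$ is the base component of a morphism, property~(iii) of Definition~\ref{morph} yields $\iota(s)^{-*}=\iota(s^{-*})$, so $K^{\Rc}(\iota(s),\iota(t))=(\Rc_{\iota(s^{-*})})^{-*}\circ\Rc_{\iota(t)}$. The commutative diagram of Theorem~\ref{compare}, namely $\Rc\circ\theta=\Rc_0$ read fiberwise as $\Rc_{\iota(t)}\circ\theta_{t}=\Rc^{0}_{t}$, then collapses the outer composition with $\theta_t$, giving
$$
(\theta_{s})^{-1}\circ K^{\Rc}(\iota(s),\iota(t))\circ\theta_{t}
=(\theta_{s})^{-1}\circ(\Rc_{\iota(s^{-*})})^{-*}\circ\Rc^{0}_{t}.
$$

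The crux of the argument, and the step I expect to be the main obstacle, is the identity
$$
(\theta_{s})^{-1}\circ(\Rc_{\iota(s^{-*})})^{-*}=(\Rc^{0}_{s^{-*}})^{-*},
$$
since it requires tracking the quasi-adjoint $(-*)$ through both the isometry $\theta$ and the strong duality pairings, rather than through a plain bundle isomorphism. The hard part will be organizing the bookkeeping, which I would handle by testing against the pairing: for $h\in\Hc$ and $y\in(D_0)_{s^{-*}}$, the isometry property of $\theta$ (Definition~\ref{isometric}) gives $((\theta_{s})^{-1}w\mid y)_{s,s^{-*}}=(w\mid\theta_{s^{-*}}y)_{\iota(s),\iota(s^{-*})}$ for $w\in D_{\iota(s)}$. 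Applying this to $w=(\Rc_{\iota(s^{-*})})^{-*}h$ and then invoking the defining relation of $(\Rc_{\iota(s^{-*})})^{-*}$ from Remark~\ref{sesqui1} turns the right-hand side into $(h\mid\Rc_{\iota(s^{-*})}\theta_{s^{-*}}y)_{\Hc}=(h\mid\Rc^{0}_{s^{-*}}y)_{\Hc}=((\Rc^{0}_{s^{-*}})^{-*}h\mid y)_{s,s^{-*}}$. As $(\cdot\mid\cdot)_{s,s^{-*}}$ is a strong duality pairing, letting $y$ range over $(D_0)_{s^{-*}}$ yields the claimed equality. Substituting it back gives $\Theta^{*}K^{\Rc}(s,t)=(\Rc^{0}_{s^{-*}})^{-*}\circ\Rc^{0}_{t}=K^{\pi}(s,t)$, as required.

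As an alternative I would note that one can bypass the explicit adjoint bookkeeping by appealing to Proposition~\ref{cat5}(i): it would be enough to check that $\Theta\in\Hom(K^{\pi},K^{\Rc})$ and that the induced operator $\Hc^{\Theta}\colon\Hc^{K^{\pi}}\to\Hc^{K^{\Rc}}$ is an isometry, both of which follow from $\theta$ being an isometry of like-Hermitian structures together with $\Rc\circ\theta=\Rc_0$. I nonetheless prefer the direct computation sketched above, since it exhibits transparently the precise point at which the \emph{isometry} hypothesis of Theorem~\ref{compare}, as opposed to a mere fiberwise bundle isomorphism, is used.
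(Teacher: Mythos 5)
Your proposal is correct and follows essentially the same route as the paper: expand $K^{\Rc}(\iota(s),\iota(t))=(\Rc_{\iota(s)^{-*}})^{-*}\circ\Rc_{\iota(t)}$ and use the identity $\Rc\circ\theta=\Rc^0$ from the proof of Theorem~\ref{compare} on both factors. The only difference is cosmetic: the paper compresses the adjoint step into the one-line composition rule $(\theta_{s^{-*}})^{-*}\circ(\Rc_{\iota(s)^{-*}})^{-*}=(\Rc_{\iota(s^{-*})}\circ\theta_{s^{-*}})^{-*}$, whereas you verify the same identity by testing against the duality pairing.
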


\begin{proof} 
It has been noticed in the proof of Theorem~\ref{compare} that 
$\Rc\circ\theta=\Rc^0$. 
Hence, for $s,t\in\Oc_{z_0}\subseteq Z$,  
$$
\begin{aligned}
(\theta_{s^{-*}})^{-*}\circ K^{\Rc}(\iota(s),\iota(t))\circ\theta_t
&=(\theta_{s^{-*}})^{-*}\circ(\Rc_{\iota(s)^{-*}})^{-*}\circ\Rc_{\iota(s)}\circ\theta_t \\ 
&=(\Rc_{s^{-*}}\circ\theta_{s^{-*}} )^{-*}\circ(\Rc_t\circ\theta_t)
=(\Rc^0_{s^{-*}})^{-*}\circ\Rc^0_t
\end{aligned}
$$
as we wanted to show.
\end{proof}
%\end{remark}

\begin{remark}\label{nonhomog}
\normalfont
The significance of Theorem~\ref{compare} is the following one: 
In the setting of Definition~\ref{relationship}, the special situation 
of subsection 1.3 is met precisely when the action 
$\nu\colon G\times Z\to Z$ is transitive, 
and in this case the transfer mapping  
is essentially the unique mapping that relates the bundle~$\Pi$ 
to the representation of the bigger group~$G$. 

On the other hand, by considering direct products of homogeneous Hermitian vector bundles, 
we can construct obvious examples of other maps relating bundles to representations 
as in Definition~\ref{relationship}. 
Moreover, the canonical examples in this paper of both classes of vector bundles 
are related with the tautological ones linked to Grassmannians on Hilbert spaces. 
See next Section~\ref{sect4}.  
\qed
\end{remark}

\section{Grassmannians and universal reproducing $(-*)$-kernels}\label{sect4}

Let $\Hc$ be a complex Hilbert space and let $\Bc(\Hc)$ be as above the
$C^*$-algebra of bounded linear operators on $\Hc$ with the involution $T\mapsto T^*$ 
defined by the adjoint mapping. 
Let $\GL(\Hc)$ be
the Banach-Lie group of all invertible elements of $\Bc(\Hc)$, and
$\U(\Hc)$ its Banach-Lie subgroup of all unitary operators on $\Hc$. 
For short, we put $\Gc=\GL(\Hc)$ and $\Uc=\U(\Hc)$.
We recall the following notation from Example~4.6 in \cite{BG07}.
\begin{itemize}
\item[$\bullet$]
  $\Gr(\Hc):=\{\Sc\mid\Sc\text{ closed linear subspace of }\Hc\}$;
\item[$\bullet$]
  $\Tc(\Hc):=\{(\Sc,x)\in\Gr(\Hc)\times\Hc\mid x\in\Sc\}
   \subseteq\Gr(\Hc)\times\Hc$;
\item[$\bullet$]
  $\Pi_{\Hc}\colon\,(\Sc,x)\mapsto\Sc$, $\Tc(\Hc)\to\Gr(\Hc)$;
\item[$\bullet$]
  for every $\Sc\in\Gr(\Hc)$ we denote by $p_{\Sc}\colon\Hc\to\Sc$
  the corresponding orthogonal projection.
\end{itemize}
These objects have the
following well known properties:
\begin{itemize}
\item[{\rm(a)}] Both $\Gr(\Hc)$ and $\Tc(\Hc)$ have structures of complex Banach manifolds ($\Gr(\Hc)$ is called the {\it Grassmannian manifold} of $\Hc$) and $\Gr(\Hc)$ carries
 a natural (non-transitive) action of $\Uc$.
(See Examples 3.11 and 6.20 in \cite{Up85}.) In fact, such an action extends to $\Gc$ as the inclusion $\Gc\hookrightarrow\Bc(\Hc)$. 
\item[{\rm(b)}] The mapping $\Pi_{\Hc}\colon\Tc(\Hc)\to\Gr(\Hc)$
is a holomorphic Hermitian vector bundle, and we call it the {\it
universal (tautological) vector bundle} associated with  the Hilbert
space $\Hc$.
\end{itemize}

\begin{definition}\label{U3}
\normalfont
Denote by $p_1,p_2\colon\Gr(\Hc)\times\Gr(\Hc)\to\Gr(\Hc)$ 
the natural projections and define 
$$
Q_{\Hc}\colon\Gr(\Hc)\times\Gr(\Hc)\to
 \Hom(p_2^*(\Pi_{\Hc}),p_1^*(\Pi_{\Hc}))
 $$
by 
$$
Q_{\Hc}(\Sc_1,\Sc_2)=(p_{\Sc_1})|_{\Sc_2}\colon\Sc_2\to\Sc_1
\text{ for all }\Sc_1,\Sc_2\in\Gr(\Hc). 
$$
The mapping $Q_{\Hc}$ is the {\it universal reproducing kernel} 
associated with the Hilbert space $\Hc$. 
\qed
\end{definition}

The vector bundle $\Pi_{\Hc}\colon\Tc(\Hc)\to\Gr(\Hc)$ being Hermitian, 
it is in particular like-Hermitian for the involution on $\Gr(\Hc)$ given by the identity map. 
Indeed, for the natural action of $\Gc$ on $\Pi_{\Hc}$, 
the natural representation $\Gc\to\Bc(\Hc)$, and the involution 
$u\mapsto u^{-1},\ \Gc\to\Gc$, 
one can associate the transfer mapping $\Rc_{\Hc}$ given by the projection in the second component
$\Rc_{\Hc}\colon(S,x)\mapsto x,\ \Tc(\Hc)\to\Hc$. 
Thus the tautological bundle $\Pi_{\Hc}\colon\Tc(\Hc)\to\Gr(\Hc)$, under the action of 
$\Gc=\GL(\Hc)$, induces an object of the category $\Rg$, 
and it is readily seen that the kernel $Q_{\Hc}$ is that one defined by the transfer mapping $\Rc_{\Hc}$. 
So the kernel $Q_{\Hc}$ defined above is an important example of object in the category $\Tran$. 
We look for more examples of like-Hermitian structures in $\Pi_{\Hc}$ 
with non-trivial involutions in the base space $\Gr(\Hc)$. 
For this purpose we introduce the following notion.

\begin{definition}\label{invol_morph}
\normalfont
An {\it involutive morphism} 
(respectively, an {\it involutive antimorphism}) 
on the tautological vector bundle $\Pi_{\Hc}$ 
is a morphism 
(respectively, an antimorphism) 
$\Omega=(\gamma,\omega)$ 
from $\Pi_{\Hc}$ into itself 
such that the following conditions are satisfied: 
\begin{itemize}
\item[{\rm(a)}] 
 The map $\omega\colon\Gr(\Hc)\to\Gr(\Hc)$ 
 is an involutive diffeomorphism which we denote by $\Sc\mapsto\Sc^{-*}$.
\item[{\rm(b)}] 
 For every $\Sc\in\Gr(\Hc)$ the mapping 
 $\gamma_{\Sc}\colon\Sc\to\Sc^{-*}$ 
 is an isometric linear (respectively, antilinear) bijective map and 
 $\gamma_{\Sc^{-*}}=(\gamma_{\Sc})^{-1}$.
\item[{\rm(c)}] 
 For all $\Sc_1,\Sc_2\in\Gr(\Hc)$ and $x_1\in\Sc_1$, $x_2\in\Sc_2$ 
 we have 
 $(x_1\mid x_2)_{\Hc}=(\gamma_{\Sc_1}(x_1)\mid \gamma_{\Sc_2}(x_2))_{\Hc}$ 
(respectively, 
  $(x_1\mid x_2)_{\Hc}=\overline{(\gamma_{\Sc_1}(x_1)\mid \gamma_{\Sc_2}(x_2))}_{\Hc}$).
\end{itemize}
Now assume that $\Omega$ is an involutive morphism 
on the tautological vector bundle $\Pi_{\Hc}$. 
Then the {\it like-Hermitian structure associated with $\Omega$} 
is the structure 
$\{(\cdot\mid\cdot)_{\Sc,\Sc^{-*}}\}_{\Sc\in\Gr(\Hc)}$ 
on $\Pi_{\Hc}$ defined by 
$$
(x\mid y)_{\Sc,\Sc^{-*}}=(x\mid\gamma_{\Sc^{-*}}(y))_{\Hc}
$$
whenever $x\in\Sc$, $y\in\Sc^{-*}$, and $\Sc\in\Gr(\Hc)$. 
It is easy to see that this is indeed a like-Hermitian structure. 
The vector bundle $\Pi_{\Hc}$ endowed with 
the like-Hermitian structure associated with $\Omega$ 
will be denoted by~$\Pi_{\Hc,\Omega}$. 
The {\it reproducing $(-*)$-kernel associated with $\Omega$} 
is the reproducing $(-*)$-kernel on $\Pi_{\Hc,\Omega}$, 
$$
Q_{\Hc,\Omega}\colon\Gr(\Hc)\times\Gr(\Hc)\to
 \Hom(p_2^*(\Pi_{\Hc,\Omega}),p_1^*(\Pi_{\Hc,\Omega}))
 $$
defined by 
$Q_{\Hc,\Omega}(\Sc_1,\Sc_2)=p_{\Sc_1}\circ\gamma_{\Sc_2}
 \colon\Sc_2\to\Sc_1$
whenever $\Sc_1,\Sc_2\in\Gr(\Hc)$. 
\qed
\end{definition}

Let us show that $Q_{\Hc,\Omega}$ is a reproducing $(-*)$-kernel on $\Pi_{\Hc,\Omega}$. 
For all 
$\Sc_1,\dots,\Sc_n\in\Gr(\Hc)$ and $x_j\in\Sc_j^{-*}$ for $j=1,\dots,n$ 
we have
$$
\begin{aligned}
\sum_{j,l=1}^n
 (Q_{\Hc,\Omega}(\Sc_l,\Sc_j^{-*})x_j\mid x_l)_{\Sc_l,\Sc_l^{-*}}
 &=\sum_{j,l=1}^n
 (p_{\Sc_l}(\gamma_{\Sc_j^{-*}}(x_j))\mid x_l)_{\Sc_l,\Sc_l^{-*}} 
  =\sum_{j,l=1}^n
 (p_{\Sc_l}(\gamma_{\Sc_j^{-*}}(x_j))\mid 
     \gamma_{\Sc_l^{-*}}(x_l))_{\Hc} \\
 &=\sum_{j,l=1}^n
 (\gamma_{\Sc_j^{-*}}(x_j)\mid 
     \gamma_{\Sc_l^{-*}}(x_l))_{\Hc} 
  =
 (\sum_{j=1}^n\gamma_{\Sc_j^{-*}}(x_j)\mid 
     \sum_{l=1}^n\gamma_{\Sc_l^{-*}}(x_l))_{\Hc}\ge 0
\end{aligned}
$$
and 
$$
\begin{aligned}
(x_j\mid Q_{\Hc,\Omega}(\Sc_j,\Sc_l^{-*})x_l)_{\Sc_j^{-*},\Sc_j} 
 &=(x_j\mid p_{\Sc_j}(\gamma_{\Sc_l^{-*}}(x_l)))_{\Sc_l,\Sc_l^{-*}}
  =(x_j\mid 
    \gamma_{\Sc_j}(p_{\Sc_j}(\gamma_{\Sc_l^{-*}}(x_l))))_{\Hc}\\
 &=(\gamma_{\Sc_j^{-*}}(x_j)\mid 
    p_{\Sc_j}(\gamma_{\Sc_l^{-*}}(x_l)))_{\Hc} 
  =(\gamma_{\Sc_j^{-*}}(x_j)\mid 
    \gamma_{\Sc_l^{-*}}(x_l))_{\Hc}\\
 &=(Q_{\Hc,\Omega}(\Sc_l,\Sc_j^{-*})x_j\mid x_l)_{\Sc_l,\Sc_l^{-*}},
\end{aligned}
$$
where the latter equality was obtained during the previous calculation. 

Hence we have that $Q_{\Hc,\Omega}$ is an object of the category $\Kern$. 
We would like to have $Q_{\Hc,\Omega}$ in $\Tran$ but there is no reason 
for the existence of a transfer mapping, associated with the bundle $\Pi_{\Hc}$, 
from which $Q_{\Hc,\Omega}$ could emerge.

\begin{remark}\label{U5.6}
\normalfont
Every involutive isometric linear operator $C\colon\Hc\to\Hc$ 
defines an involutive morphism $\Omega_C=(\gamma_C,\omega_C)$ on 
$\Pi_{\Hc}$ 
by the formulas 
$$
\omega_C\colon\Sc\mapsto C(\Sc),\quad \Gr(\Hc)\to\Gr(\Hc),$$
and $(\gamma_C)_{\Sc}:=C|_{\Sc}\colon\Sc\to C(\Sc)$ 
whenever $\Sc\in\Gr(\Hc)$. 
If $C\colon\Hc\to\Hc$ is an involutive isometric antilinear operator, 
then the same formulas define an involutive antimorphism 
$\Omega_C=(\gamma_C,\omega_C)$ on 
$\Pi_{\Hc}$.

We shall write $Q_{\Hc,C}$ to refer to $Q_{\Hc,\Omega}$ in the case when $\Omega=\Omega_{C}$. So we have 
$$
Q_{\Hc,C}(S_1,S_2)=p_{S_1}\circ C\colon S_2\to S_1\quad \text{ for }S_1,S_2\in \Gr(\Hc).
$$
Similarly, we put $\Pi_{\Hc,C}=\Pi_{\Hc,\Omega_C}$.

Let us take a closer look at this example. 
Firstly, we notice that if we take the involution in $\Gc$ given by $u\mapsto Cu^{-1}C$, $\Gc\to\Gc$, 
then the vector bundle $\Pi_{\Hc,C}$, satisfies, under the natural action of $ \Gc$, 
those properties stated in Definition~\ref{relationship} 
which are independent of a (possible) transfer mapping. 
Furthermore, let $\Rc_{\Hc}^C$ denote the map 
$\Rc_{\Hc}\colon(S,x)\mapsto x,\ \Tc(\Hc)\to\Hc$ considered after 
Definition \ref{U3}, where we assume that $\Pi_{\Hc}$ is endowed with 
the like-Hermitian pairings $(x\mid y)_{S,S^{-*}}:=(x\mid Cy)_{\Hc}$, for every 
$S\in\Gr(\Hc)$, $x\in S$, $y\in C(S)$. 
Then $\Rc_{\Hc,S}$ is the inclusion $\iota_S\colon S\hookrightarrow\Hc$, so that for every $y\in S^{-*}$ 
and $h\in\Hc$,
$$
\begin{aligned}
((\Rc_{\Hc,S^{-*}})^{-*}h\mid y)_{S,S^{-*}}&=(h\mid\Rc_{\Hc,S^{-*}}y)_{\Hc}
=(h\mid y)_{\Hc} 
=(Ch\mid Cy)_{\Hc} \\
&=(p_SC(h)\mid Cy)_{\Hc}=(p_SC(h)\mid Cy)_{S,S^{-*}}.
\end{aligned}
$$
Hence $(\Rc_{\Hc,S^{-*}})^{-*}=p_S\circ C$ whence we get 
$(\Rc_{\Hc,S_1}^{-*})^{-*}\circ\Rc_{\Hc,S_2}
=p_{S_1}\circ C\circ\iota_{S_2}=Q_{\Hc,C}(S_1,S_2)$.

Besides the above equality, properties (i) and (iii) in Definition~\ref{relationship} 
are clearly satisfied by $\Rc_{\Hc}^C$. 
Thus the map $\Rc_{\Hc}^C$ resembles a transfer mapping for $\Pi_{\Hc,C}$ 
as those of  the category $\Rg$. 
If $\Rc_{\Hc}^C$ were such a mapping then  $Q_{\Hc,C}$ would become a kernel in the category $\Tran$. 
But the mapping $\Rc_{\Hc}^C$ is not a transfer mapping because 
it does not satisfy property (ii) of Definition~\ref{relationship}. 
\qed
\end{remark}

By applying Theorem~\ref{compare} to the bundle $\Pi_{\Hc}$ 
(with the identity map taken as involution in $\Gr(\Hc)$) 
we obtain objects of the category  $\HLH$ in terms of Grassmannians. 
We next  examine this example in detail.

Let $\Sc_0\in\Gr(\Hc)$. 
The orbit of $\Sc_0$ in $\Gr(\Hc)$ under the collineation action by $\Gc$ 
will be denoted by $\Gr_{\Sc_0}(\Hc)$. 
It is well known that $\Gr_{\Sc_0}(\Hc)$ coincides with the unitary orbit of $\Sc_0$ 
and with the connected component of $\Sc_0$ in $\Gr(\Hc)$, and that it is given by
$$
\begin{aligned}
\Gr_{\Sc_0}(\Hc)
&:=\{u\Sc_0\mid u\in\GL(\Hc)\}=\{u\Sc_0\mid u\in\U(\Hc)\} \\
&=\{\Sc\in\Gr(\Hc)\mid\dim\Sc=\dim\Sc_0\text{ and
}\dim\Sc^\perp=\dim\Sc_0^\perp\} \\
&\simeq\U(\Hc)/(\U(\Sc_0)\times\U(\Sc_0^\perp)).
\end{aligned}
 $$
(See Proposition~23.1 in \cite{Up85} or Lemma~\ref{U3} in \cite{BG07}.)

Let $\Gc([p_{\Sc_0}])$ denote the isotropy subgroup of $\Sc_0$; that is, 
$\Gc([p_{\Sc_0}]):=\{u\in\Gc\mid u\Sc_0=\Sc_0\}$. 
This notation has been taken from \cite{BG07} 
where it was suggested for reasons which can be found in that paper. 
Then
$$
\Gc/\Gc([p_{\Sc_0}])\simeq\Gr_{\Sc_0}(\Hc),
$$
where the symbol \lq\lq\ $\simeq$ " means diffeomorphism between the
respective differentiable structures, and that the differentiable
structure of the quotient space is the one associated with the quotient topology.

Set
$\Tc_{\Sc_0}(\Hc):=\{(\Sc,x)\in\Tc(\Hc)\mid\Sc\in\Gr_{\Sc_0}(\Hc)\}$.
The vector bundle $\Pi_{\Hc,\Sc_0}\colon\Tc_{\Sc_0}(\Hc)\to\Gr_{\Sc_0}(\Hc)$ obtained by
restriction of $\Pi_{\Hc}$ to $\Tc_{\Sc_0}(\Hc)$ will be called here
the universal vector bundle {\it at $\Sc_0$}. 
It is also Hermitian and holomorphic. 
Moreover it is  biholomorphically diffeomorphic to the vector bundle  
$\Gc\times_{\Gc([p])}\Sc_0\to\Gc/\Gc([p])$ where we write $p:=p_{\Sc_0}$ for brevity. 
Such a diffeomorphism is induced by the map defined between the total spaces of the respective bundles as  
$([u,x])\mapsto(u\Sc_0,ux),\ \Gc\times_{\Gc([p])}\Sc_0\to\Tc_{\Sc_0}$.

Then, by restriction to $\Pi_{\Hc,\Sc_0}$ of the pertinent elements previously considered in this section, we obtain that $\Pi_{\Hc,\Sc_0}$ defines an object of the category $\HLH$. So in particular the kernel
$$
Q_{\Hc,\Sc_0}(u_1\Sc_0,u_2\Sc_0)(u_2x):=u_1p_{\Sc_0}u_1^{-1}u_2x,
\qquad (u_1,u_2\in\Gc,\,x\in\Sc_0)
$$
is an object coming from the category $\Rep$.

As in the case of $\Pi_{\Hc}$ we wish to find now involutions in $\Gr_{\Sc_0}(\Hc)$ different from the identity map. 
For this, one can take an involutive morphism satisfying an additional assumption. 
We illustrate this point by considering involutive isometries $C$ on $\Hc$.

Specifically, let $C\colon\Hc\to\Hc$ be a continuous involutive map 
which is either ${\mathbb C}$-linear or conjugate-linear, 
and let $\Sc_0$ be a closed linear subspace 
such that $C(\Sc_0)=\Sc_0$. 
It turns out that the orbit $\Gr_{\Sc_0}(\Hc)$ 
is invariant under the involution $\Sc\mapsto C(\Sc)$ 
of $\Gr(\Hc)$, and then by restriction we will get an involution 
on $\Gr_{\Sc_0}(\Hc)$. 
In fact, 
 \begin{equation}\label{G}
\Gr_{\Sc_0}(\Hc)=\{D(\Sc_0)\mid D\in\Gc\}.
\end{equation}
Now let $\Sc\in\Gr_{\Sc_0}(\Hc)$ arbitrary. 
Then there exists $D\in\G(A)$ with $\Sc=D(\Sc_0)$, 
so that $C(\Sc)=CD(\Sc_0)=CDC^{-1}(\Sc_0)$ since $C(\Sc_0)=\Sc_0$. 
Since $CDC^{-1}\in\Gc$ irrespective of whether $C$ is ${\mathbb C}$-linear or conjugate-linear, 
it then follows by \eqref{G} that $C(\Sc)\in\Gr_{\Sc_0}(\Hc)$. 
Thus the orbit $\Gr_{\Sc_0}(\Hc)$ is indeed preserved by the involution $\Sc\mapsto C(\Sc)$ of $\Gr(\Hc)$. 
This property 
corresponds to the fact that the quotient manifold $\Gc/\Gc([p])$, which is diffeomorphic to 
$\Gr_{\Sc_0}(\Hc)$, is invariant under the involution $u\Gc([p])\mapsto u^{-*}\Gc([p])$ where 
$u^{-*}:=CuC$ for every $u\in\Gc$.

Let us point out what is the form of the kernel $Q_{\Hc,C}$ restricted to the bundle $\Pi_{\Hc,\Sc_0}$. 
Suppose that $\Sc\in\Gr_{\Sc_0}(\Hc)$. Then there exists $u\in\U(\Hc)$ such that 
$u\Sc_0=\Sc$ and $u\Sc_0^\perp=\Sc^\perp$. 
Then $up_{\Sc_0}=p_{\Sc}u$, that is, 
$p_{\Sc}=up_{\Sc_0}u^{-1}$. 
Thus for all $u_1,u_2\in\U(\Hc)$ and $x_1,x_2\in\Sc_0$ 
we have 
$$
Q_{\Hc,C}(u_1\Sc_0,u_2\Sc_0)(Cu_2x_2)
=p_{u_1\Sc_0}(Cu_2x_2)=u_1p_{\Sc_0}(u_1^{-1}Cu_2x_2).
$$ 
In short, $Q_{\Hc,C}(\Sc_1,\Sc_2)=Q_{\Hc}(\Sc_1,C(\Sc_2))\circ C$.

\medskip
The so chosen involution in $Gr_{\Sc_0}(\Hc)$ depends obviously on a pre-fixed involutive isometry 
$C$ on $\Hc$.Thus it seems to be too much having the pretension to obtain universality results associated with the manifold $Gr_{\Sc_0}(\Hc)$ in this situation. 
Searching for an alternative way to overcome this obstacle we are led to consider 
a (canonical) complexification of $Gr_{\Sc_0}(\Hc)$.

Let $B:=\{p\}'$ be the commutant subalgebra of $p$ in $\Bc(\Hc)$. 
The algebra $B$ is formed by the operators $T$ such that $T(\Sc_0)\subset\Sc_0$, $T(\Sc_0^\perp)\subset\Sc_0^\perp$. 
Moreover, $B$ is self-adjoint so that it is in fact a $C^*$-subalgebra of $\Bc(\Hc)$. 
Let $\Gc(p)$ denote the group of invertibles in $B$. 
Clearly, $\Gc(p)$ is a (closed) Banach-Lie subgroup of $\Gc$, 
which moreover is stable under the operation of taking adjoints $u\mapsto u^*$.

Then, endowed with the quotient topology, the space $\Gc/\Gc(p)$ is 
a homogeneous Banach manifold enjoying the natural involution given by 
$$  
u\Gc(p)\mapsto u^{-*}\Gc(p),\ \Gc/\Gc(p)\to\Gc/\Gc(p)
$$
where $u^{-*}$ is this time the inverse of the adjoint operator of $u\in\Gc$.

Further, the quotient manifold $\Gc/\Gc(p)$ can be described as the set of pairs 
$\{(u\Sc_0,u^{-*}\Sc_0):u\in\Gc\}$ so that the former involution takes the form 
$$
(u\Sc_0,u^{-*}\Sc_0)\mapsto(u^{-*}\Sc_0,u\Sc_0).
$$ 
Note that the fixed-point set of such an involution is 
$$
\{(u\Sc_0,u\Sc_0)\mid u\in\Uc\}\equiv\Gr_{\Sc_0}(\Hc).
$$
The preceding remarks tell us in particular that the orbit of projections 
$\Gc/\Gc(p)\simeq\{upu^{-1}:u\in\Gc\}$ is a complexification of the (complex, in turn) manifold 
$\Gr_{\Sc_0}(\Hc)$. This fact can be extended up to the level of vector bundles, so that the bundle 
$\Gc\times_{\Gc(p)}\Sc_0\to\Gc/\Gc(p)$, which we shall denote by 
$\Pi_{\Hc}^{\C}$ 
in the sequel, becomes a complexification of the bundle  
$\Gc\times_{\Gc([p])}\Sc_0\to\Gr_{\Sc_0}(\Hc)$, in a sense which is made precise in \cite{BG08}, Theorem 4.4. 
See \cite{BG07} and \cite{BG08} for details about the above results.

For $\Hc_A:=\Hc$, $\Hc_B:=\Sc_0$, $p:=p_{\Sc_0}$, the inclusion mapping 
$\pi_A\colon\Gc\hookrightarrow\Bc(\Hc)$ and the restriction mapping 
$\pi_B:=\pi_A\mid_{\Gc(p)}$, it is clear that 
the vector bundle $\Pi_{\Hc,\Sc_0}$ gives rise to the object $(\pi_A,\pi_B;p)$ in 
(the notation for objects in) the category $\HLH$. 
In fact, $(\pi_A,\pi_B;p)$ can be obtained as an object in the category $\SLH$. 
More precisely, for every closed subspace $\Kc$ of $\Sc_0$ 
there is a completely positive mapping $\Phi_{\Kc}$ on $\Bc(\Hc)$ 
which by application of the Stinespring method (see subsection~\ref{subsect1.4}) 
yields the vector bundle $\Pi_{\Hc,\Sc_0}$.

\begin{proposition}\label{cpgras}
In the above setting, put 
$E_p\colon T\mapsto pTp+(1-p)T(1-p),\ \Bc(\Hc)\to\{p\}'$. 
Let $\Kc$ be a closed subspace of $\Sc_0$ and define the compression mapping
$$  
\Phi_{\Kc}\colon T\mapsto p_{\Kc}\circ T\circ\iota_{\Kc},\ \Bc(\Hc)\to\Bc(\Kc).
$$
Then $\Phi_{\Kc}$ is a unital completely positive mapping with 
$\Phi_{\Kc}\circ E_p=\Phi_{\Kc}$ and such that the vector bundle defined by
$(\Bc(\Hc), \{p\}',E_p;\Phi_{\Kc})$ coincides with $\Pi_{\Hc,\Sc_0}$.
\end{proposition}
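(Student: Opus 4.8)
The plan is to establish the three assertions in turn: the first two by direct computation and the third by unwinding the Stinespring construction of subsection~\ref{subsect1.4}. For the first, I would write $V:=\iota_\Kc\colon\Kc\hookrightarrow\Hc$, so that $V^*=p_\Kc$ and $V$ is an isometry, giving $\Phi_\Kc(T)=V^*TV$. Then $\Phi_\Kc(\1)=V^*V=\id_\Kc$ yields unitality, while complete positivity is the standard fact that any map $T\mapsto V^*TV$ is completely positive: for a positive matrix $(T_{lj})\in M_n(\Bc(\Hc))$ and $h_1,\dots,h_n\in\Kc$ one has $\sum_{l,j}(\Phi_\Kc(T_{lj})h_j\mid h_l)_\Kc=\sum_{l,j}(T_{lj}Vh_j\mid Vh_l)_\Hc\ge0$.

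Next I would check $\Phi_\Kc\circ E_p=\Phi_\Kc$. This rests on the four identities $p\,\iota_\Kc=\iota_\Kc$, $(1-p)\iota_\Kc=0$, $p_\Kc\,p=p_\Kc$, and $p_\Kc(1-p)=0$, all immediate from $\Kc\subseteq\Sc_0=\Ran p$. Substituting $E_p(T)=pTp+(1-p)T(1-p)$ into $\Phi_\Kc=p_\Kc(\cdot)\iota_\Kc$ and using these, the second summand drops out and the first collapses to $p_\Kc T\iota_\Kc=\Phi_\Kc(T)$. Along the way I would note that $E_p$ is the canonical conditional expectation of $\Bc(\Hc)$ onto $\{p\}'=p\Bc(\Hc)p\oplus(1-p)\Bc(\Hc)(1-p)$ (it is idempotent, of norm one, with range $\{p\}'$, since $E_p|_{\{p\}'}=\id$), so that $(\Bc(\Hc),\{p\}',E_p;\Phi_\Kc)$ is a bona fide object of $\SLH$.

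The substantive point is the third assertion, for which I would run the Stinespring construction for $\Phi_\Kc$ explicitly. Taking $\Hc_0=\Kc$ in the notation of subsection~\ref{subsect1.4} and using $\Phi_\Kc(a^*b)=p_\Kc a^*b\iota_\Kc$ together with $\eta_j,\xi_i\in\Kc\subseteq\Hc$, the defining sesquilinear form computes to
\[
\Bigl(\sum_j b_j\otimes\eta_j\mid\sum_i a_i\otimes\xi_i\Bigr)
=\sum_{i,j}(a_i^*b_j\eta_j\mid\xi_i)_\Hc
=\Bigl(\sum_j b_j\eta_j\mid\sum_i a_i\xi_i\Bigr)_\Hc.
\]
Thus the map $\sum_i a_i\otimes\xi_i\mapsto\sum_i a_i\xi_i$ is isometric from $(\Bc(\Hc)\otimes\Kc)/N_A$ into $\Hc$; since for a fixed unit $\xi\in\Kc$ the rank-one operators $x\mapsto(x\mid\xi)h$ carry $\xi$ to $h$ for every $h\in\Hc$, this map has dense range, so the Stinespring space $\Kc_0$ is canonically identified with $\Hc$. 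Under this identification $\pi_{\Phi_\Kc}(a)$ becomes left multiplication by $a$, so $\pi_A$ is the tautological inclusion $\Gc\hookrightarrow\Bc(\Hc)$. Repeating the computation with $a_i,b_j$ restricted to $B=\{p\}'$ identifies the Stinespring space of $\Phi_\Kc|_B$ with $\overline{\{a\xi\mid a\in B,\ \xi\in\Kc\}}=\Sc_0$ (again via rank-one operators $x\mapsto(x\mid\xi)h$ with $h\in\Sc_0$, which now lie in $B$ because $\xi,h\in\Sc_0$), with $\pi_B$ the restriction of $\pi_A$; and the formula of Remark~\ref{two_sq} for $P$ reduces, using $py=y$ and $E_p(a)y=p(ay)$ for $y\in\Kc$, to the orthogonal projection $p=p_{\Sc_0}\colon\Hc\to\Sc_0$. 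Hence the Stinespring data of $(\Bc(\Hc),\{p\}',E_p;\Phi_\Kc)$ is exactly the object $(\pi_A,\pi_B;p)$ with $\Hc_A=\Hc$, $\Hc_B=\Sc_0$ attached to $\Pi_{\Hc,\Sc_0}$ in the discussion preceding the proposition, so the two bundles coincide.

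The only genuine obstacle lies in the third part: keeping the Stinespring identifications canonical and checking their compatibility across $A$, $B$, and the projection $P$; once the isometry $\sum a_i\otimes\xi_i\mapsto\sum a_i\xi_i$ is in place, the matching of $\pi_A$, $\pi_B$, and $P$ is routine. I would finally remark that none of these identifications depend on the chosen subspace $\Kc\subseteq\Sc_0$ (only on $\Kc\neq\{0\}$), which is precisely the phenomenon asserted in the paragraph before the proposition.
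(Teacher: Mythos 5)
Your proposal is correct and follows essentially the same route as the paper's proof: direct verification of unitality, complete positivity, and $\Phi_{\Kc}\circ E_p=\Phi_{\Kc}$ via the identities $p\iota_{\Kc}=\iota_{\Kc}$ and $p_{\Kc}p=p_{\Kc}$, followed by running the Stinespring construction to identify $[\Bc(\Hc)\otimes\Kc]/N$ with $\Hc$ and $[\{p\}'\otimes\Kc]/N_0$ with $\Sc_0$ through the map $\sum a_i\otimes\xi_i\mapsto\sum a_i\xi_i$. The only divergence is cosmetic: you establish surjectivity of that map by rank-one operators (which commute with $p$ when $\xi,h\in\Sc_0$), whereas the paper uses the observation that $E_p(T)k=x$ for a preimage $T$ of $x$ under evaluation at $k\in\Kc$; both work, and your explicit check that $P$ reduces to $p_{\Sc_0}$ fills in a step the paper leaves to the reader.
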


\begin{proof} The mapping $\Phi_{\Kc}$ of the statement is clearly unital. 
It is also completely positive:
For an integer $n$, a positive matrix $(T_{i,j})_{1\leq i,j\leq n}$ in $M_n(\Bc(\Hc))$ and 
$x_1,\dots,x_n\in\Kc$,
$$
\sum_{i,j=1}^n(\Phi_{\Kc}(T_{i,j})x_j\mid x_i)_{\Kc}
=\sum_{i,j=1}^n(p_{\Kc}T_{i,j}x_j \mid x_i)_{\Kc}
=\sum_{i,j=1}^n(T_{i,j}x_j \mid x_i)_{\Hc}\geq0
$$
since $(T_{i,j})_{1\leq i,j\leq n}$ is positive.

Take $T$ in $\Bc(\Hc)$. Then 
$\Phi_{\Kc}(E_p(T))=p_{\Kc}\circ Ep(T)\circ\iota_{\Kc}
=p_{\Kc}(pTp+(1-p)T(1-p))\iota_{\Kc}$. Since $\Kc\subseteq\Sc_0$ we have 
$(1-p)\iota_{\Kc}=0$, whence 
 $\Phi_{\Kc}(E_p(T))=(p_{\Kc}\circ p)\circ T\circ\iota_{\Kc}$. 
 Also, for every 
 $x,y\in\Kc$ we have $(p_{\Kc}(1-p)x\mid y)=((1-p)x\mid p_{\Kc}y)=0$ since 
 $p_{\Kc}y\in\Kc\subseteq\Sc_0$ and $(1-p)x\in\Sc_0^{\perp}$. 
This means that $p_{\Kc}p=p_{\Kc}$. 
Thus $\Phi_{\Kc}E_p=\Phi_{\Kc}$.

Now we proceed with the Stinespring method. 
For every $b,a\in\Bc(\Hc)$ and $x,y\in\Kc$ we have 
$$
(\Phi_{\Kc}(a^*b)x\mid y)_{\Kc}=(p_{\Kc}(a^*bx)\mid y)_{\Kc}
=(a^*bx\mid y)_{\Hc}=(bx\mid ay)_{\Hc}.
$$
From this, it follows that the norm $\Vert\cdot\Vert_{\Phi_\Kc}$ on $\Bc(\Hc)\otimes\Kc$ 
defined by the mapping $\Phi_\Kc$ as in subsection~\ref{subsect1.4} is given by
$$
\Vert\sum_{j=1}^n b_j\otimes x_j\Vert_{\Phi_\Kc}^2
=\Vert\sum_{j=1}^n b_jx_j\Vert^2 
$$
for every $\sum_{j=1}^n b_j\otimes x_j\in\Bc(\Hc)\otimes\Kc$. 
Hence the associated null space $N$ is formed by all the above elements 
$\sum_{j=1}^n b_j\otimes x_j\in\Bc(\Hc)\otimes\Kc$ such that 
$\sum_{j=1}^n b_jx_j=0$.
 On the other hand, the map 
$\sum_{j=1}^n b_j\otimes x_j\mapsto\sum_{j=1}^n b_jx_j$, 
$\Bc(\Hc)\otimes\Kc\to\Hc$ is well defined 
(note that $\Hc$ is a left module on $\Bc(\Hc)$ for the natural action) and surjective, and so we get that 
$$
[\Bc(\Hc)\otimes\Kc]/N\simeq \Hc.
$$
Moreover, it is straightforward to derive from the above that 
the Stinespring dilation obtained from $\Phi_\Kc$ is the identity mapping 
$\iota\colon\Bc(\Hc)\to\Bc(\Hc)$.

In an analogue way one can show that $[\{p\}'\otimes\Kc]/N_0\simeq \Sc_0$ 
where $N_0$ is the null space associated with the previous norm restricted to $\{p\}'\otimes\Kc$. 
Here the only point to comment is perhaps the surjectivity of the mapping
$$ 
\sum_{j=1}^n b_j\otimes x_j\mapsto\sum_{j=1}^n b_jx_j, \ 
\{p\}'\otimes\Kc\to\Sc_0.
$$
This can be proved for instance as follows. 
Given $x\in\Sc_0$ one can choose $T$ in $\Bc(\Hc)$ and $k\in\Kc$ such that $T(k)=x$. 
Then $E_p(T)\in\{p\}'$ and $E_p(T)k=x$ since $\Kc\subseteq\Sc_0$. 
Hence the above map is onto. 

Putting all the above facts together the statement of the proposition follows readily.
\end{proof}

%\bigskip

%?  SIRVE PARA ALGO  :

%For every $u\in\Gc$ and $\Sc=u\Sc_0$ we have
%$\Sc^\perp=(u^*)^{-1}(\Sc_0^\perp)$.  ?

%\bigskip

\section{The universality theorems}\label{sect5}

We state and prove in this section the fundamental results 
which enable us to recover reproducing $(-*)$-kernels as pull-backs of the canonical kernels 
associated with Grassmannian tautological bundles. The first theorem shows up the existence of such pull-backs in purely algebraic terms.

\begin{theorem}\label{U6}
Let $\Pi\colon D\to Z$ be a like-Hermitian vector bundle. 
Denote by $p_1,p_2\colon Z\times Z\to Z$ the natural projections 
and let $K\colon Z\times Z\to\Hom(p_2^*\Pi,p_1^*\Pi)$ 
be a reproducing $(-*)$-kernel. 
Then let $\Hc^K$ be 
the reproducing kernel Hilbert space associated with $K$, 
and define 
$$
\widehat{K}\colon D\to\Hc^K,\quad 
\widehat{K}(\xi)=K_\xi=K(\cdot,\Pi(\xi))\xi\colon Z\to D,
$$ 
and 
$$
\zeta_K\colon Z\to\Gr(\Hc^K),\quad 
\zeta_K(s):=\overline{\widehat{K}(D_s)}.
$$
Assume that we have an involutive morphism $\Omega=(\gamma,\omega)$ 
on $\Pi_{\Hc}$ 
such that the involutive diffeomorphism 
$\Sc\mapsto\Sc^{-*}:=\omega(\Sc)$ 
of $\Gr(\Hc^K)$ satisfies 
\begin{equation}\label{compatible}
(\forall s\in Z)\quad \zeta_K(s^{-*})=\zeta_K(s)^{-*}.
\end{equation}
Then there exists
a vector bundle morphism 
$\Delta_K=(\delta_K,\zeta_K)$ from $\Pi$ into 
the like-Hermitian vector bundle~$\Pi_{\Hc^K,\Omega}$ such that 
$K$ is equal to the pull-back of the reproducing $(-*)$-kernel 
$Q_{\Hc^K,\Omega}$ by $\Delta_K$, that is, 
$K=(\Delta_K)^*Q_{\Hc^K,\Omega}$.  
\end{theorem}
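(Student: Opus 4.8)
The plan is to exhibit $\delta_K$ explicitly, check that $\Delta_K=(\delta_K,\zeta_K)$ is a morphism of like-Hermitian bundles, and then verify the pull-back identity fiberwise by testing it against the strong duality pairing.

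First I would define $\delta_K\colon D\to\Tc(\Hc^K)$ by $\delta_K(\xi)=(\zeta_K(\Pi(\xi)),\widehat K(\xi))$; equivalently, for $s\in Z$ the fiber map $(\delta_K)_s:=\widehat K|_{D_s}\colon D_s\to\Hc^K$, $\xi\mapsto K_\xi$, takes values in $\zeta_K(s)=\overline{\widehat K(D_s)}$, which is precisely the fiber of $\Pi_{\Hc^K,\Omega}$ over $\zeta_K(s)$. I would then check the three conditions of Definition~\ref{morph}: condition~(i) (commutativity of the defining square) is immediate from the construction; condition~(ii), namely that each $(\delta_K)_s$ is a bounded linear operator, follows because $K_\xi=K(\cdot,s)\xi$ is linear in $\xi$ and $\Vert K_\xi\Vert_{\Hc^K}^2=(K(s^{-*},s)\xi\mid\xi)_{s^{-*},s}$ by \eqref{prods}; and condition~(iii) is exactly the standing hypothesis \eqref{compatible}. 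Once $\Delta_K$ is a morphism, the quasi-adjoints $((\delta_K)_{s^{-*}})^{-*}$ exist automatically, since the pairings are strong duality pairings (as recorded after Definition~\ref{morph}).

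Next, because every $(\cdot\mid\cdot)_{s,s^{-*}}$ is a strong duality pairing, to prove $K=(\Delta_K)^*Q_{\Hc^K,\Omega}$ it suffices to establish, for all $s,t\in Z$, $\eta\in D_t$ and $\xi\in D_{s^{-*}}$, the scalar identity $\bigl(K(s,t)\eta\mid\xi\bigr)_{s,s^{-*}}=\bigl((\Delta_K)^*Q_{\Hc^K,\Omega}(s,t)\eta\mid\xi\bigr)_{s,s^{-*}}$. The left-hand side equals $(K_\eta\mid K_\xi)_{\Hc^K}$ directly from \eqref{prods} (applied with $s$ replaced by $s^{-*}$). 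For the right-hand side I would unwind the pull-back formula \eqref{pullK}, so that $(\Delta_K)^*Q_{\Hc^K,\Omega}(s,t)=((\delta_K)_{s^{-*}})^{-*}\circ p_{\zeta_K(s)}\circ\gamma_{\zeta_K(t)}\circ(\delta_K)_t$, and then peel off the operators one at a time: pass the outer quasi-adjoint to the other side of the pairing via \eqref{quasi-adjointable} together with the conjugate-symmetry of the pairing; rewrite the like-Hermitian pairing on $\Pi_{\Hc^K,\Omega}$ as $(x\mid y)_{\Sc,\Sc^{-*}}=(x\mid\gamma_{\Sc^{-*}}y)_{\Hc^K}$; use self-adjointness of the orthogonal projection $p_{\zeta_K(s)}$ together with $\gamma_{\zeta_K(s^{-*})}(K_\xi)\in\zeta_K(s)$; and finally invoke the isometry property~(c) and the relation $\gamma_{\Sc^{-*}}=(\gamma_{\Sc})^{-1}$ from Definition~\ref{invol_morph} to cancel all remaining occurrences of $\gamma$. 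This collapses the right-hand side to $(K_\eta\mid K_\xi)_{\Hc^K}$ as well, yielding the claimed equality.

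The bookkeeping of involutions is where the real care is needed, and I expect it to be the principal difficulty. The hypothesis \eqref{compatible} must be used to guarantee $\zeta_K(s)^{-*}=\zeta_K(s^{-*})$, so that the domain of $((\delta_K)_{s^{-*}})^{-*}$ is $\zeta_K(s)$ and the composition in \eqref{pullK} is well defined; moreover properties~(b) and~(c) of the involutive morphism must be applied with the subspaces $\zeta_K(s^{-*})$, $\zeta_K(s)$ and $\zeta_K(t)$ matched correctly, since a single misaligned index would break the cancellation of the $\gamma$'s and projections. No deep analytic input beyond the reproducing property \eqref{prods} is required; the regularity (smoothness) of $\delta_K$ and $\zeta_K$ is a separate matter belonging to the geometric refinements of the theory and lies outside this purely algebraic identity.
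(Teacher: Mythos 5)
Your proposal is correct and follows essentially the same route as the paper: define $\delta_K(\xi)=(\zeta_K(\Pi(\xi)),\widehat K(\xi))$ and verify $K=(\Delta_K)^*Q_{\Hc^K,\Omega}$ by testing against the strong duality pairing, moving the quasi-adjoint across, unwinding $Q_{\Hc^K,\Omega}=p_{\Sc_1}\circ\gamma_{\Sc_2}$ and the pairing $(x\mid y)_{\Sc,\Sc^{-*}}=(x\mid\gamma_{\Sc^{-*}}y)_{\Hc^K}$, and cancelling the $\gamma$'s via property~(c), exactly as in the paper's chain of equalities ending in $(K_\eta\mid K_\xi)_{\Hc^K}$. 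The only cosmetic difference is that the paper factors $\delta_K=\psi_K\circ\check K$ through the explicit pull-back bundle $L_K=\zeta_K^*\Tc(\Hc^K)$ (re-used later in Corollary~\ref{cor_invert}), whereas you write $\delta_K$ directly.
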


\begin{proof}
Denote 
$$
L_K:=\{(s,x)\in Z\times\Hc^K\mid x\in\zeta_K(s)\}\subseteq 
Z\times\Hc^K,
$$
and define 
$$
\Lambda_K\colon L_K\to Z,\quad (s,x)\mapsto s.
$$
It follows by \eqref{compatible} that 
$\Lambda_K\colon L_K\to Z$ is a like-Hermitian vector bundle, 
namely this is the pull-back of the tautological bundle 
$\Pi_{\Hc^K,\Omega}\colon\Tc(\Hc^K)\to\Gr(\Hc^K)$ 
by the mapping $\zeta_K\colon Z\to\Gr(\Hc^K)$.

On the other hand, define 
$$
\check{K}\colon D\to L_K,\quad 
\check{K}(\xi)=(\Pi(\xi),\widehat{K}(\xi))=(\Pi(\xi),K_\xi)
$$
and 
$$
\psi_K\colon L_K\to\Tc(\Hc^K),\quad\psi_K(s,x)=(\zeta_K(s),x).
$$
Then we get the commutative diagram 
$$\begin{CD}
D @>{\check{K}}>> L_K @>{\psi_K}>> \Tc(\Hc^K) \\
@V{\Pi}VV @VV{\Lambda_K}V @VV{\Pi_{\Hc^K}}V \\
Z @>{\id_Z}>> Z @>{\zeta_K}>> \Gr(\Hc^K)
\end{CD}$$
where the vertical arrows are like-Hermitian vector bundles, 
while the pairs of horizontal arrows $(\check{K},\id_Z)$ 
and $(\psi_K,\zeta_K)$ 
are homomorphisms between these bundles.

We now define $\delta_K=\psi_K\circ\check{K}$ and check that 
the vector bundle homomorphism $\Delta_K:=(\delta_K,\zeta_K)$ 
has the wished-for property $K=(\Delta_K)^*Q_{\Hc^K,\Omega}$. 
In fact, let $t,s\in Z$ and $\eta\in D_t$, $\xi\in D_s$ arbitrary. 
Then 
$$
\begin{aligned}
(((\Delta_K)^*Q_{\Hc^K,\Omega}(s^{-*},t)\eta\mid\xi)_{s^{-*},s} 
 &=((\delta_K|_{D_{s}})^{-*}\circ 
     Q_{\Hc^K,\Omega}(\zeta_K(s^{-*}),\zeta_K(t))
   \circ(\delta_K|_{D_t}))\eta\mid\xi)_{s^{-*},s} \\
 &=(Q_{\Hc^K,\Omega}(\zeta_K(s^{-*}),\zeta_K(t))\delta_K\eta\mid
    \delta_K\xi)_{\zeta_K(s)^{-*},\zeta_K(s)} \\
 &=(Q_{\Hc^K,\Omega}(\zeta_K(s^{-*}),\zeta_K(t))K_\eta\mid
    K_\xi)_{\zeta_K(s)^{-*},\zeta_K(s)} \\
 &=((p_{\zeta_K(s^{-*})}\circ\gamma_{\zeta_K(t)}(K_\eta)\mid 
     \gamma_{\zeta_K(s)}(K_\xi))_{\Hc^K} \\
 &=(\gamma_{\zeta_K(t)}(K_\eta)\mid
 \gamma_{\zeta_K(s)}(K_\xi))_{\Hc^K} \\
 &=(K_\eta\mid K_\xi)_{\Hc^K} 
 =(K(s^{-*},t)\eta\mid\xi)_{s^{-*},s}
\end{aligned}
$$
so that indeed $K=(\Delta_K)^*Q_{\Hc^K,\Omega}$.
\end{proof}

The construction of the mapping $\zeta_K\colon Z\to\Gr(\Hc^K)$ 
in Theorem~\ref{U6} is inspired by the mapping 
$\Zc$ defined in formula~(16) in \cite{MP97}, 
which in turn extends the corresponding maps for complex {\it line} bundles given in \cite{Od92}. 
In the latter reference it is shown that categories formed by objects like $\zeta_K$ (or $\Zc$) 
are equivalent to categories of vector bundles with distinguished kernels. 
Next, and under natural conditions, we extend that result by showing that 
the pull-back operation giving the  kernel $K$ in Theorem~\ref{U6} 
also enables us to recover the whole vector bundle $\Pi$.

\begin{corollary}\label{cor_invert}
Under the conditions of Theorem~\ref{U6}, if, in addition, we assume that 
for every $s\in Z$
\begin{equation}\label{invert}
K(s,s)\colon D_s\to D_s\text{ is invertible}, 
\end{equation}
then the vector bundle morphism $(\check{K},\id_Z)$ is an (algebraic) isomorphism 
between the bundles $\Pi\colon D\to Z$ and $\Lambda_K\colon L_K\to Z$, so $\Pi\colon D\to Z$
is isomorphic to the pull-back bundle 
$\zeta_K^*\Tc(\Hc^K)\to Z$ defined by the mapping $\zeta_K$.
\end{corollary}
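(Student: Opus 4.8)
The plan is to show that the fiberwise component of $\check{K}$ is a linear bijection for every $s\in Z$. Since the proof of Theorem~\ref{U6} already establishes that $(\check{K},\id_Z)$ is a morphism of like-Hermitian bundles covering $\id_Z$, and that $\Lambda_K\colon L_K\to Z$ is precisely the pull-back bundle $\zeta_K^*\Tc(\Hc^K)\to Z$, fiberwise bijectivity is exactly what is needed to conclude that $(\check{K},\id_Z)$ is an algebraic vector bundle isomorphism and hence that $\Pi\cong\zeta_K^*\Tc(\Hc^K)$. Under the canonical identification $(L_K)_s=\{s\}\times\zeta_K(s)\cong\zeta_K(s)$, the fiber map is $\check{K}_s\colon D_s\to\zeta_K(s)$, $\xi\mapsto K_\xi=\widehat{K}(\xi)$, whose range is by definition dense in $\zeta_K(s)=\overline{\widehat{K}(D_s)}$. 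So the whole statement reduces to proving that $\widehat{K}_s$ is injective with closed range. Note that because only an \emph{algebraic} isomorphism is claimed, I need not worry about the regularity of the inverse $\check{K}_s^{-1}$.

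The main tool will be a factorization of $K(s,s)$ through $\Hc^K$. Recalling from Remark~\ref{kapi}(1) the evaluation maps $\ev_s^\iota\colon\Hc^K\to D_s$ satisfying $K(s,t)=\ev_s^\iota\circ(\ev_{t^{-*}}^\iota)^{-*}$, together with the identification $\widehat{K}_s=(\ev_{s^{-*}}^\iota)^{-*}=\Rc_K|_{D_s}$, specialization to $t=s$ gives the key identity
\[
K(s,s)=\ev_s^\iota\circ\widehat{K}_s\colon D_s\to D_s.
\]
Injectivity of $\widehat{K}_s$ is then immediate: if $\widehat{K}_s\xi=0$, then $K(s,s)\xi=\ev_s^\iota(\widehat{K}_s\xi)=0$, and since $K(s,s)$ is invertible by hypothesis~\eqref{invert}, this forces $\xi=0$.

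For the range, I would observe that the bounded operator $K(s,s)^{-1}\circ\ev_s^\iota\colon\Hc^K\to D_s$ is a \emph{left inverse} of $\widehat{K}_s$; here $\ev_s^\iota$ is bounded (Proposition~3.7 in \cite{BG08}) and $K(s,s)^{-1}$ is bounded by the open mapping theorem. A bounded operator admitting a bounded left inverse is bounded below, hence has closed range, so $\widehat{K}(D_s)=\overline{\widehat{K}(D_s)}=\zeta_K(s)$. Therefore $\widehat{K}_s\colon D_s\to\zeta_K(s)$ is a continuous linear bijection, i.e.\ $\check{K}_s$ is bijective; this makes $(\check{K},\id_Z)$ fiberwise bijective and thus the required algebraic isomorphism $\Pi\cong\Lambda_K=\zeta_K^*\Tc(\Hc^K)$.

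The only genuinely non-formal point is the surjectivity of $\widehat{K}_s$, that is, the closedness of its range: this is exactly where the invertibility assumption~\eqref{invert} is used, through the bounded left inverse $K(s,s)^{-1}\circ\ev_s^\iota$. Everything else—injectivity, the commutativity of the bundle diagram, and the identification of $\Lambda_K$ with the pull-back $\zeta_K^*\Tc(\Hc^K)$—is either immediate or already furnished by Theorem~\ref{U6}. Thus the single technical input to recall carefully is the factorization $K(s,s)=\ev_s^\iota\circ\widehat{K}_s$ together with the boundedness of the evaluation maps, and these carry the whole argument.
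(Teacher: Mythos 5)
Your proof is correct and follows essentially the same route as the paper: establish that $\widehat K|_{D_s}$ is injective with closed range, so that $\check K$ is fiberwise bijective onto $\zeta_K(s)=\widehat K(D_s)$ and hence admits an inverse. The paper's own proof of the corollary simply asserts these two facts, deferring their justification to the proof of Theorem~\ref{contpull}, where precisely your estimate appears (there $\theta_{s^{-*}}$ plays the role of your bounded evaluation map $\ev_s^\iota$, and the bound $\Vert\xi\Vert\le\Vert K(s,s)^{-1}\Vert\,\Vert\theta_{s^{-*}}(K_\xi)\Vert$ is your bounded left inverse $K(s,s)^{-1}\circ\ev_s^\iota$), so your write-up just makes explicit, via the factorization $K(s,s)=\ev_s^\iota\circ\widehat K_s$, what the paper leaves implicit.
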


\begin{proof} 
In view of Theorem~\ref{U6} it suffices to show that the map $\check{K}$ has an inverse. 
If $(s,x)\in L_K$ then $x\in\zeta_K(s)=\widehat K(D_s)$ with $\widehat K$ injective, so there exists a unique 
$\xi\in D_s$ with $x=\widehat K(\xi)$. 
Then we obtain the inverse of $\check K$ by defining 
$\check{K}^{-1}\colon L_K\to D$ by 
$\check{K}^{-1}(s,x):=(\widehat K|_{D_s})^{-1}(x)$ for all $s\in Z$ and $x\in\zeta_K(s)$.
\end{proof}

%\medskip
Now, of course, there is the question of the continuity and/or smoothness properties of 
the pull-back morphism $\Delta_K$ 
in the preceding theorem. Since 
$\Delta_K=(\delta_K,\zeta_K)$ with $\delta_K(\xi)=(\zeta_K(\Pi(\xi)),\widehat K(\xi))$ for all $\xi\in D$, 
the morphism $\Delta_K$ is continuous if and only if $\widehat K$ and $\zeta_K$ are continuous mappings. 
In order to prove the continuity of $\widehat K$ it seems natural to assume the continuity of $K$. 

For a trivialization $\Psi_U$ of $\Pi$ over the open set $U\subseteq Z$, let us denote 
$\Psi_U^{-1}(\xi)=(\Pi(\xi),\varphi_U(\xi))$, so that 
$\varphi_U\colon D_{\Pi(\xi)}\to\Ec$ is a linear topological isomorphism, for all $\xi\in\Pi^{-1}(U)$. 
We shall sometimes identify~$\xi$ with $(\Pi(\xi),\varphi_U(\xi))$ putting $\xi\equiv(\Pi(\xi),\varphi_U(\xi))$. 
Let $\Pi_{p_2^*p_1^*}$ denote the vector bundle 
$\Hom(p_2^*\Pi,p_1^*\Pi)\to Z\times Z$. 
If $U,V$ are open sets in $Z$ of corresponding trivializations 
$\Psi_U$, $\Psi_V$ one has 
$$
\Pi^{-1}_{p_2^*p_1^*}(U\times V)=\bigcup_{(s,t)\in U\times V} \Bc(D_t,D_s)$$
with associated isomorphisms
$$
\Psi^{-1}_{U\times V}\colon 
T\mapsto(s,t,T_\Ec),\ \Pi^{-1}_{p_2^*p_1^*}(U\times V)\to U\times V\times \Bc(\Ec,\Ec)
$$
given by 
$$
T_\Ec(x)=\varphi_U\left(T(\Psi_V(t,x))\right),\qquad (x\in\Ec),
$$
for $T\in\Bc(D_t,D_s)$, and
$$
\Psi_{U\times V}\colon
(s,t,T_\Ec)\mapsto T_{st}(\eta),\ U\times V\times \Bc(\Ec,\Ec)\to\Pi^{-1}_{p_2^*p_1^*}(U\times V)
$$
such that 
$$
T_{st}(\eta)=\Psi_U(s,T_\Ec(\varphi_U(\eta))),\qquad (\eta\in D_t).
$$
The basic neighborhoods of any $T\colon D_t\to D_s$ of $\Hom(p_2^*\Pi,p_1^*\Pi)$ have the form
$\Psi_{U\times V}(U_0\times V_0\times B_\rho(T_\Ec))$ 
where $U$, $V$ are trivialization open subsets of $Z$, $U_0\subseteq U$ and $V_0\subseteq V$ 
run over the family of basic neighborhoods of $t,s$, respectively, 
and $B_\rho:=\{L\in\Bc(\Ec,\Ec)\mid\Vert L\Vert_{\Ec\to\Ec}<\rho\}$ with 
$B_\rho(T_\Ec)=T_\Ec+B_\rho$, for $\rho>0$.

If $K$ is a reproducing $(-*)$- kernel and $U,V$ are as above, we put
$$
K^{U,V}_\Ec(s,t):=\Psi_{U\times V}^{-1}\circ K|_{U\times V}(s,t), \qquad (s,t)\in U\times V.
$$
We shall identify $K^{U,V}_\Ec(s,t)$ with $K(s,t)_\Ec$ for all $(s,t)\in U\times V$. 
Clearly, $K$ is continuous if and only if $K_\Ec$ is.

\begin{lemma}\label{Kcontin} 
If the reproducing kernel $K$ is continuous then the mapping
$$
\widehat{K}\colon D\to\Hc^K,\quad 
\widehat{K}(\xi)=K_\xi=K(\cdot,\Pi(\xi))\xi\colon Z\to D,
$$
is continuous.
\end{lemma}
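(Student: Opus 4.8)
The plan is to prove continuity of $\widehat{K}$ by working locally in a trivialization and reducing the problem to the continuity of the kernel $K$, which is our standing hypothesis. The target space is the reproducing kernel Hilbert space $\Hc^K$, so I need to control the $\Hc^K$-norm of differences $\widehat{K}(\xi)-\widehat{K}(\xi_0)=K_\xi-K_{\xi_0}$ as $\xi\to\xi_0$ in $D$. Fix $\xi_0\in D$ with $s_0:=\Pi(\xi_0)$, and choose a trivialization $\Psi_U$ over an open neighbourhood $U$ of $s_0$ together with a trivialization over a neighbourhood of $s_0^{-*}$, compatible with the involution. First I would write, using the defining inner product \eqref{prods} of $\Hc^K$, the expansion
\begin{equation*}
\Vert K_\xi-K_{\xi_0}\Vert_{\Hc^K}^2
=(K(s^{-*},s)\xi\mid\xi)_{s^{-*},s}
-2\,\Re(K(s_0^{-*},s)\xi\mid\xi_0)_{s_0^{-*},s_0}
+(K(s_0^{-*},s_0)\xi_0\mid\xi_0)_{s_0^{-*},s_0},
\end{equation*}
where $s:=\Pi(\xi)$. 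The strategy is then to show each of the three scalar terms converges to the common value $(K(s_0^{-*},s_0)\xi_0\mid\xi_0)_{s_0^{-*},s_0}$ as $\xi\to\xi_0$.

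Next I would pass to the local coordinates introduced just before the lemma: identify $\xi\equiv(\Pi(\xi),\varphi_U(\xi))$ and $K(a,b)\equiv K(a,b)_\Ec\in\Bc(\Ec,\Ec)$ via the trivialization isomorphisms $\Psi_{U\times V}$. Under this identification, each of the three pairings above becomes an expression built from (i) the operator-valued map $(a,b)\mapsto K(a,b)_\Ec$, (ii) the fibre coordinates $\varphi_U(\xi)$, $\varphi_U(\xi_0)$ in the fixed Banach space $\Ec$, and (iii) the smooth sesquilinear pairing function from condition (c) of Definition~\ref{like}, which expresses $(\cdot\mid\cdot)_{z,z^{-*}}$ in local coordinates and is jointly smooth, hence continuous. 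The hypothesis that $K$ is continuous is, by the sentence preceding the lemma, exactly the statement that $K_\Ec$ is continuous; therefore $(a,b)\mapsto K(a,b)_\Ec$ is continuous into $\Bc(\Ec,\Ec)$ with the operator norm. As $\xi\to\xi_0$ we have simultaneously $s=\Pi(\xi)\to s_0$ (continuity of $\Pi$) and $\varphi_U(\xi)\to\varphi_U(\xi_0)$ in $\Ec$, and by the involution-compatibility of the trivializations also $s^{-*}\to s_0^{-*}$ with the corresponding local vector converging.

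Putting these together, each scalar term is a continuous function of the data $(s,s^{-*},\varphi_U(\xi))$ — being a composition of the continuous pairing function with the continuous operator $K(\cdot,\cdot)_\Ec$ evaluated on convergent fibre vectors — so all three terms converge to $(K(s_0^{-*},s_0)\xi_0\mid\xi_0)_{s_0^{-*},s_0}$, and hence $\Vert K_\xi-K_{\xi_0}\Vert_{\Hc^K}^2\to0$. This proves $\widehat{K}$ is continuous at $\xi_0$, and since $\xi_0$ was arbitrary, $\widehat{K}$ is continuous on all of $D$. The main obstacle I anticipate is purely bookkeeping: keeping the base-point arguments $s$, $s^{-*}$, $s_0$, $s_0^{-*}$ and the two fibre coordinates straight through the trivialization, and verifying that the cross term $(K(s_0^{-*},s)\xi\mid\xi_0)$ — which mixes the moving point $s$ with the fixed point $s_0^{-*}$ — is genuinely covered by the joint continuity of $K_\Ec$ on $U\times U$; this is where one must be careful to choose $U$ small enough that both $U$ and a neighbourhood of $s_0^{-*}$ lie in trivializing charts, so that the local formula for $K(s_0^{-*},s)_\Ec$ applies uniformly. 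No delicate estimates beyond continuity are needed, so the argument is essentially a continuity-of-composition verification once the local picture is set up correctly.
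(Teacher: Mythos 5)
Your proof is correct and follows essentially the same route as the paper's: expand $\Vert K_\xi-K_{\xi_0}\Vert_{\Hc^K}^2$ via the reproducing property \eqref{prods}, pass to a local trivialization, and conclude from the continuity of $K_\Ec$ together with the joint continuity of the local pairing coming from Definition~\ref{like}(c). The only cosmetic difference is that the paper groups the four terms into two differences and extracts an explicit quantitative bound $M\rho$ (using the Banach--Steinhaus theorem to get a uniform constant $C_U$ for the pairing near $s_0$), whereas you argue term-by-term convergence to the common limit; both are valid.
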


\begin{proof} 
First, take $s_0\in Z$ and a trivialization isomorphism $\Psi_U$ with $s_0\in U$. 
For any $t\in U$, define the continuous sesquilinear form $(\cdot\mid\cdot)_{U,t}$ by
$$
(x\mid y)_{U,t}=(\Psi_U(t,x)\mid\Psi_{U^{-*}}(t^{-*},y))_{t,t^{-*}}\ ,\quad\text{for } x,y\in\Ec.
$$
For $x,y\in\Ec$ and $t$ near $s_0$ there is a constant $C_U(x,y)>0$ such that 
$\vert(x\mid y)_{U,t}\vert\leq C_U(x,y)$, by property~(c) in Definition~\ref{like}. 
Then by using the Banach-Steinhaus theorem we get a constant $C_U>0$ such that
\begin{equation}\label{bilinear}
\vert(x\mid y)_{U,t}\vert\leq C_U\Vert x\Vert\ \Vert y\Vert,\quad\text{for } x,y\in\Ec,\ t\in U', 
\end{equation}
for a suitable neighborhood $U'\subseteq U$ of $s_0$.

Now, given $\rho>0$, since $K$ is continuous there exists an open set $U_0\subseteq U'$ with $s_0\in U_0$, such that 
$$
K(U^{-*}_0, U_0)
\subset \Psi_{U^{-*}\times U}\left(U^{-*}\times U\times\Omega_\rho(K^{U^{-*},U}_\Ec(s^{-*}_0,s_0))\right).
$$
Put $K_\Ec=K^{U^{-*},U}_\Ec$. The previous inclusion means that 
$
\Vert K_\Ec(s^{-*},t)-K_\Ec(s^{-*}_0,s_0)\Vert\leq\rho$ for $s,t\in U_0$. 
Finally, let $\xi_0$ be in $D$ such that $s_0=\Pi(\xi_0)$. Put 
$M=C_U(\Vert K_\Ec(s^{-*}_0,s_0)\Vert + \Vert x_0\Vert + 2)(2\Vert x_0\Vert+1)$. 
If $\eta\in\Pi^{-1}(U_0)$, $\eta\equiv(t,x)$ such that 
$x\in B_\rho(x_0):=\{y:\Vert y-x_0\Vert\leq\rho\}$ with $0<\rho<1$, then
$$
\begin{aligned}
\Vert K_\eta-K_{\xi_0}\Vert^2
&=(K_\eta\mid K_\eta)-(K_{\xi_0}\mid K_\eta)
-(K_\eta\mid K_{\xi_0})+( K_{\xi_0}\mid K_{\xi_0}) \\
&=(K(t^{-*},t)\eta-K(t^{-*},s_0)\xi_0\mid\eta)_{t^{-*},t}
+(K(s^{-*}_0,s_0)\xi_0-K(s^{-*}_0,t)\eta\mid\xi_0)_{s^{-*}_0,s_0} \\
&=(K_\Ec(t^{-*},t)x-K_\Ec(t^{-*},s_0)x_0\mid x)_{U^{-*},t^{-*}}
+(K_\Ec(s^{-*}_0,s_0)x_0-K_\Ec(s^{-*}_0,t)x\mid x_0)_{U^{-*},s^{-*}_0} \\ 
&\leq C_U
\left(\Vert  K_\Ec(t^{-*},t)-K_\Ec(t^{-*},s_0)\Vert\, \Vert x\Vert 
+\Vert  K_\Ec(t^{-*},s_0)\Vert\, \Vert x-x_0\Vert\right) \Vert x\Vert  \\
&+ C_U
\left(\Vert K_\Ec(s^{-*}_0,s_0)\Vert\,  \Vert x-x_0\Vert
+\Vert  K_\Ec(s_0^{-*},s_0)-K_\Ec(s_0^{-*},t)\Vert\, \Vert x\Vert\right)\Vert x_0\Vert \\
&\leq C_U
\left(\rho(\rho+\Vert x_0\Vert)+(\rho+\Vert  K_\Ec(s_0^{-*},s_0)\Vert)\rho\right)
(\rho+\Vert x_0\Vert) \\
&+
C_U\left(\Vert  K_\Ec(s_0^{-*},s_0)\Vert\rho+\rho(\rho+\Vert x_0\Vert)\right)\Vert x_0\Vert \leq M\rho
\end{aligned}
$$
Since the above inequality holds for every $\eta$ in the neighborhood 
$\Psi_U(U_0\times B_\rho(x_0))$ we have proved that $\widehat K$ is continuous. 
\end{proof}

\begin{remark} 
\normalfont
The argument used in Lemma~\ref{Kcontin} gives us in fact that the mapping
$$
(\xi,\eta)\mapsto K(\Pi(\xi),\Pi(\eta))\eta,\ D\times D\to D
$$
is continuous provided that $K$ is continuous.
\qed
\end{remark}

In the statement of Theorem~\ref{continuzeta} we are going to use the following terminology: 
Two linear subspaces $\Vc_1$ and $\Vc_2$ of 
some Hilbert space $\Hc$ are said to be \emph{similar to each other in} $\Hc$
if there exists an invertible bounded linear operator $T\colon\Hc\to\Hc$ 
such that $T(\Vc_1)=\Vc_2$. 
For instance, it is easy to see that if $\Vc_1$ is finite-dimensional, then 
it is similar to $\Vc_2$ if and only if $\Vc_2$ is also finite-dimensional 
and $\dim\Vc_1=\dim\Vc_2$. 
Note however that if both $\Vc_1$ and $\Vc_2$ are closed infinite-dimensional 
subspaces of some separable Hilbert space $\Hc$ and $\dim\Vc_1^\perp<\infty=\dim\Vc_2^\perp$, 
then $\Vc_1$ and $\Vc_2$ are \emph{not} similar to each other although 
$\dim\Vc_1=\dim\Vc_2$ ($=\infty$). In the theorem we are dealing with the mapping $\zeta$, which is set-valued.

For a topological space ${\mathcal T}$, we say that 
a mapping $F\colon{\mathcal T}\to \Gr(\Hc)$ is {\it similar-valued} if 
$F(\tau)$ is similar to $F(\sigma)$ for every $\tau,\sigma\in{\mathcal T}$. 
Then $F$ will be called {\it locally} similar-valued on ${\mathcal T}$ if for any point $\tau$ in 
${\mathcal T}$ there is a neighborhood $V$ of $\tau$ such that 
$F|_V$ is similar-valued on $V$.

Since every orbit in $\Gr(\Hc)$ is an open set (see for example Corollary~8.4(iv) in \cite{Up85}), 
it is clear that a function $F$ as above  is locally similar whenever it is continuous. 
Conversely, we are going to see that the continuity of the $\Gr(\Hc^K)$-valued mapping $\zeta_K$ 
is equivalent to its local similarity, under the assumption that  $\widehat K$ is open on fibers.

\begin{theorem}\label{continuzeta} In the above setting, suppose that:
\begin{enumerate} 
\item\label{K} The kernel $K$ is continuous,
\item\label{open} the bounded linear operator $\widehat{K}|_{D_s}\colon  D_s\to\Hc^K$ 
is injective with closed range for all $s\in Z$.
\end{enumerate} 
Then $\zeta_K\colon Z\to\Gr(\Hc^K)$ is continuous if and only if it is locally similar.
\end{theorem}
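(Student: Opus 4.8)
The plan is to prove the nontrivial implication, that local similarity of $\zeta_K$ forces its continuity; the converse is immediate, since every orbit of $\GL(\Hc^K)$ in $\Gr(\Hc^K)$ is open (as recalled just before the statement), so a continuous $\zeta_K$ carries a neighborhood of any point into a single orbit and is therefore locally similar-valued. For the forward direction I would fix $s_0\in Z$, abbreviate $\Sc_0:=\zeta_K(s_0)$ and $q_0:=p_{\Sc_0}$, and work in the graph chart of $\Gr(\Hc^K)$ centred at $\Sc_0$: the closed subspaces complementary to $\Sc_0^\perp$ form an open neighborhood of $\Sc_0$ on which $\Sc\mapsto p_{\Sc}$ is continuous, each such $\Sc$ being the graph of a unique bounded operator $\Sc_0\to\Sc_0^\perp$.

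The first step is to encode $\zeta_K$ near $s_0$ by operators. Choosing a trivialization $\Psi_U$ of $\Pi$ with $s_0\in U$, I set $W_s:=\widehat{K}\circ\Psi_U(s,\cdot)\colon\Ec\to\Hc^K$ for $s\in U$, so that $\Ran W_s=\widehat{K}(D_s)=\zeta_K(s)$ by assumption~(\ref{open}) (closed range makes the closure in the definition of $\zeta_K$ superfluous). The continuity assertion of Lemma~\ref{Kcontin}, together with the local estimates in its proof, which are uniform in the fibre variable, yields that $s\mapsto W_s$ is norm-continuous from $U$ into $\Bc(\Ec,\Hc^K)$; this is the quantitative heart of the argument. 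By assumption~(\ref{open}) the operator $W_{s_0}\colon\Ec\to\Sc_0$ is a bounded linear bijection, hence a topological isomorphism, so $q_0W_{s_0}=W_{s_0}\colon\Ec\to\Sc_0$ is invertible.

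Since invertibility is an open condition and $s\mapsto q_0W_s$ is norm-continuous, $q_0W_s\colon\Ec\to\Sc_0$ remains invertible for $s$ in a neighborhood $V\subseteq U$ of $s_0$. For such $s$ the compression $q_0|_{\zeta_K(s)}\colon\zeta_K(s)\to\Sc_0$ is then a topological isomorphism, whence $\Hc^K=\zeta_K(s)\oplus\Sc_0^\perp$ and $\zeta_K(s)$ is the graph of $\Gamma_s:=p_{\Sc_0^\perp}\circ W_s\circ(q_0W_s)^{-1}\colon\Sc_0\to\Sc_0^\perp$. Because inversion is continuous on the invertibles, $s\mapsto\Gamma_s$ is norm-continuous on $V$ with $\Gamma_{s_0}=0$, and since the graph coordinate $\Gamma\mapsto p_{\mathrm{graph}(\Gamma)}$ is continuous, $s\mapsto p_{\zeta_K(s)}$ is continuous at $s_0$. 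As $s_0$ is arbitrary, $\zeta_K$ is continuous.

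The role of local similarity in this scheme is to keep every $\zeta_K(s)$, for $s$ near $s_0$, inside the single open orbit of $\Sc_0$, so that $\dim\zeta_K(s)$ and $\dim\zeta_K(s)^\perp$ do not jump; this is the geometric input that makes the graph-chart reduction valid on an entire neighborhood rather than merely pointwise, and that prevents the closed-range lower bound of $W_s$ — equivalently the invertibility of $q_0W_s$ — from degenerating as $s$ varies. I therefore expect the main obstacle to be exactly the uniform control of the compressed operators $q_0W_s$ on a neighborhood, i.e. combining the norm-continuity coming from Lemma~\ref{Kcontin} with the non-degeneracy furnished by assumption~(\ref{open}) and local similarity; once that control is secured, the graph-chart bookkeeping and the continuity of $\Gamma_s$ are routine.
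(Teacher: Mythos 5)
Your proof is correct, and for the substantive implication it follows a genuinely different route from the paper's. Both arguments begin identically: the easy direction via openness of the $\GL(\Hc^K)$-orbits, then a trivialization over a neighborhood of $s_0$ and the norm-continuity of $s\mapsto W_s=\widehat K(\Psi_U(s,\cdot))$ extracted from the (uniform-in-$x$) estimates of Lemma~\ref{Kcontin}; the paper's $\widetilde K_V$ is exactly your $W_s$. The divergence is in how one passes from norm-continuity of this operator family to continuity of $s\mapsto\Ran W_s$. The paper composes with $W_{s_0}^{-1}$ to obtain a norm-continuous family in $\Bc(\Hc_0,\Hc^K)$ with the same ranges and then invokes Proposition~3.6 of \cite{DEG98} (continuity of $T\mapsto\Ran T$ on a set of operators whose ranges are mutually similar) --- that citation is precisely where the local-similarity hypothesis enters. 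You instead give a self-contained graph-chart argument: invertibility of $q_0W_{s_0}$ plus openness of the set of invertibles yields invertibility of $q_0W_s$ nearby, hence $\Hc^K=\zeta_K(s)\oplus\Sc_0^\perp$ and $\zeta_K(s)=\mathrm{graph}(\Gamma_s)$ with $s\mapsto\Gamma_s$ norm-continuous, and the explicit formula for the orthogonal projection onto a graph finishes. This buys independence from the external reference and in fact proves more than the theorem asserts: note that your argument never actually uses local similarity --- the non-degeneration of $q_0W_s$ that your closing paragraph attributes to it is already secured by norm-continuity together with openness of the invertibles --- so under hypotheses (1)--(2) you obtain continuity unconditionally, and local similarity comes for free because $\zeta_K(s)$ is exhibited as a topological complement of $\Sc_0^\perp$ and any two such complements are similar in $\Hc^K$. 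This is consistent with the theorem as stated (an equivalence between two conditions that, in the Hilbert-space setting, both hold automatically); the similarity hypothesis is genuinely needed only in the Banach-space generality of \cite{DEG98}, where closed ranges need not be complemented and no analogue of $q_0$ is available. I would only ask you to rewrite the last paragraph so that it does not misdescribe the role local similarity plays in your own argument.
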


\begin{proof} 
As noticed before, the continuity of $\zeta_K$ implies automatically that 
$\zeta_K$ is locally similar  $\Gr(\Hc)$-valued.
Conversely, assume that $\zeta_K$ is locally similar. Let $s_0\in Z$ arbitrary and pick any 
open neighborhood $V\subseteq Z$ of $s_0$ such that there exists a trivialization 
$\Psi_0\colon V\times\Ec\to\Pi^{-1}(V)$ of the bundle $\Pi$ over~$V$. Also denote $\Hc_0=\zeta_K(s_0)\subseteq\Hc^K$ 
and define the maping
$$
\widetilde{K}_V\colon V\to\Bc(\Ec,\Hc^K), \quad 
s\mapsto \widehat{K}(\Psi_0(s,\cdot)). 
$$
Using an argument as that one of the proof of Lemma~\ref{Kcontin} it can be seen that the mapping $\widetilde{K}_V$ is 
continuous with respect to the norm operator topology: For $s,t\in V$, $x\in\Ec$ and $\xi\equiv(s,x)$, $\eta\equiv(t,x)$, 
$$
\begin{aligned}
\Vert\widetilde K_V(t)x-&\widetilde K_V(s)x\Vert^2 \\
&=(K_\Ec(t^{-*},t)x-K_\Ec(t^{-*},s)x\mid x)_{V^{-*},t^{-*}}+(K_\Ec(s^{-*},s)x
-K_\Ec(s^{-*},t)x\mid x)_{V^{-*},s^{-*}} \\ 
&\leq C_V\left(\Vert  K_\Ec(t^{-*},t)-K_\Ec(t^{-*},s_0)\Vert\,+\Vert  K_\Ec(s_0^{-*},s_0)
-K_\Ec(s_0^{-*},t)\Vert\right)\Vert x\Vert^2
\end{aligned}
$$
from which $\lim_{t\to s}\Vert\widetilde K_V(t)-\widetilde K_V(s)\Vert_{\Ec\to\Hc^K}=0$.

Now, note that the mapping $\widehat{K}|_{D_s}$ being injective with closed range 
means that for every $s\in Z$ there is a constant $C(s)>0$  such that 
$\Vert \widehat K(\xi)\Vert^2\geq C(s)\Vert\xi\Vert_{D_s}^2$ for all $\xi\in D_s$. 
This implies that $\widehat{K}(D_s)$ is a 
{\it closed} linear subspace of $\Hc^K$, so that $\zeta_K(s)=\widehat{K}(D_s)$, and moreover, 
 for every $s\in V$ the operator $\widetilde{K}_V(s)\colon\Ec\to\Hc^K$ 
is injective and its range is the closed subspace $\zeta_K(s)\subseteq\Hc^K$.
For the sake of simplicity let us denote by 
$\widetilde{K}_V(s_0)^{-1}\colon\Hc_0\to\Ec$ 
the inverse of the bijective linear operator 
$x\mapsto\widetilde{K}_V(s_0)x,\ \Ec\to\Hc_0$.

Since $\Hc_0$ is a closed subspace of $\Hc^K$, 
it follows by the open mapping theorem that $\widetilde{K}_V(s_0)^{-1}$ 
is continuous, hence we can define 
$$
\widehat{K}_0\colon V\to\Bc(\Hc_0,\Hc^K), \quad 
s\mapsto \widetilde{K}_V(s)\circ\widetilde{K}_V(s_0)^{-1}.
$$
This mapping is continuous with respect to the norm operator topology 
since so is~$\widetilde{K}_V$. 
In addition, for every $s\in V$ we have $\Ran(\widetilde{K}_V(s))=\zeta_K(s)$, 
hence the hypothesis implies that the ranges of the values of $\widehat{K}_0$ 
are similar to each other in~$\Hc^K$. 
Since the mapping $T\mapsto\Ran T$ is continuous on the latter 
subset of $\Bc(\Hc_0,\Hc^K)$ by Proposition~3.6 in \cite{DEG98} 
(see also Example~6.1 in \cite{DG01} and the principal bundle~(4.5) in \cite{DG02}), 
it then follows that 
the mapping $\zeta_K$ is continuous on~$V$. 
Since $V$ is a neighborhood of the arbitrary point $s_0\in Z$, this completes the proof. 
\end{proof}

\begin{remark}\label{U7}
\normalfont
As regards the continuity of the mapping $T\mapsto\Ran T$ 
on certain sets of operators (see the final argument in the above proof), 
a related result belonging to this circle of ideas 
is provided by Proposition~1.7 in~\cite{MS97}. 
\qed
\end{remark}

\begin{remark}\label{U8}
\normalfont
According to condition \eqref{open} assumed in Theorem~\ref{continuzeta}, 
the operator $K|_{D_s}\colon D_s\to\Hc^K$, being injective and with closed range, provides a topological isomorphism of $D_s$ onto 
a closed subspace of~$\Hc^K$. Thus the topology of the fiber $D_s$ can be defined by 
a suitable scalar product which turns this fiber into 
a complex \emph{Hilbert} space.
In particular, this remark shows that the existence of 
a reproducing $(-*)$-kernel 
$K\colon Z\times Z\to\Hom(p_2^*\Pi,p_1^*\Pi)$ 
satisfying condition~\eqref{open} at some point in the base 
of a like-Hermitian vector bundle $\Pi\colon D\to Z$ 
imposes rather strong conditions on this bundle 
---at least in the sense that the fiber of the bundle 
has to allow for a scalar product which defines its topology. 
(That is, the fiber has to be a \emph{Hilbertable} vector space, 
in the terminology of Chapter~VII in \cite{La01}.) 
Thus condition~\eqref{open} in Theorem~\ref{continuzeta} could appear as being rather restrictive. 
However, the next theorem 
shows that the invertibility property~\eqref{invert} of the kernel $K$ assumed in Corollary~\ref{cor_invert} 
is sufficient to imply that condition. 
\qed
\end{remark}

\begin{theorem}\label{contpull} 
Let $K\colon Z\times Z\to\Hom(p_2^*\Pi,p_1^*\Pi)$ be a reproducing 
$(-*)$-kernel on a 
like-Hermitian vector bundle $\Pi\colon D\to Z$ as in Theorem~\ref{U6}, 
with the corresponding bundle morphism~$\Delta_K$. 
Assume also that:
\begin{enumerate} 
\item\label{Kcontinuando}
$K$ is continuous.
\item\label{invertibility}
For every $s\in Z$,
\begin{equation}\label{invert1}
K(s,s)\colon D_s\to D_s\text{ is invertible}.
\end{equation}
\item\label{localsim} 
The mapping $\zeta_K$ is locally similar in $\Gr(\Hc^K)$.
\end{enumerate}
Then the morphism $\Delta_K$ is continuous.
\end{theorem}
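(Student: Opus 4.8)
The plan is to reduce the continuity of $\Delta_K=(\delta_K,\zeta_K)$ to that of its two constituents $\widehat K$ and $\zeta_K$, as recorded in the paragraph following Corollary~\ref{cor_invert}, and then to settle each separately. The continuity of $\widehat K\colon D\to\Hc^K$ is immediate from hypothesis \eqref{Kcontinuando} via Lemma~\ref{Kcontin}, so the entire burden falls on the continuity of $\zeta_K\colon Z\to\Gr(\Hc^K)$.

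For $\zeta_K$ I would appeal to Theorem~\ref{continuzeta}. Its three ingredients are the continuity of $K$ (our \eqref{Kcontinuando}), the local similarity of $\zeta_K$ (our \eqref{localsim}), and condition \eqref{open}, namely that each restriction $\widehat K|_{D_s}\colon D_s\to\Hc^K$ be injective with closed range. The first two we have for free, so the heart of the matter is to extract condition \eqref{open} from the invertibility assumption \eqref{invertibility} --- this is precisely the implication foreshadowed in Remark~\ref{U8}.

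The mechanism is a factorization of $K(s,s)$ through the evaluation map. For $\xi\in D_s$ we have $\widehat K(\xi)=K_\xi=K(\cdot,s)\xi$, and evaluating this section at $s$ gives
$$
\ev_s^\iota\bigl(\widehat K(\xi)\bigr)=K_\xi(s)=K(s,s)\xi,
$$
where $\ev_s^\iota\colon\Hc^K\to D_s$ is the bounded evaluation map of the reproducing kernel Hilbert space (Proposition~3.7 c) in \cite{BG08}). Thus $\ev_s^\iota\circ(\widehat K|_{D_s})=K(s,s)$. Since $K(s,s)$ is invertible by \eqref{invertibility} and $\ev_s^\iota$ is bounded, this identity forces $\widehat K|_{D_s}$ to be bounded below,
$$
\Vert\xi\Vert_{D_s}\le\Vert K(s,s)^{-1}\Vert\,\Vert\ev_s^\iota\Vert\,\Vert\widehat K(\xi)\Vert_{\Hc^K}\qquad(\xi\in D_s),
$$
whence $\widehat K|_{D_s}$ is injective with closed range for every $s\in Z$. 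This is exactly condition \eqref{open}, so Theorem~\ref{continuzeta} yields the continuity of $\zeta_K$, and combined with the continuity of $\widehat K$ the continuity of $\Delta_K$ follows.

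The step I expect to require the most care is the boundedness of $\ev_s^\iota$, on which the displayed lower bound rests. This is a standard feature of the reproducing kernel construction, and if one prefers an argument internal to the present framework it can be recovered from the estimate $\Vert K_\xi\Vert_{\Hc^K}^2=(K(s^{-*},s)\xi\mid\xi)_{s^{-*},s}$ together with the fact that each $(\cdot\mid\cdot)_{s^{-*},s}$ is a strong duality pairing.
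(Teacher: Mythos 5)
Your proposal is correct and follows essentially the same route as the paper: reduce to the continuity of $\widehat K$ (Lemma~\ref{Kcontin}) and of $\zeta_K$ (Theorem~\ref{continuzeta}), then derive condition~\eqref{open} from the invertibility of $K(s,s)$ via the factorization $K(s,s)\xi=K_\xi(s)$ through the bounded evaluation operator. The paper phrases the evaluation operator as the adjoint-type map $\theta_{s^{-*}}$ coming from the strong duality pairing (Lemma~3.5 in \cite{BG08}), which is exactly the boundedness point you flag at the end, so the two arguments coincide.
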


\begin{proof} 
Recall that $\Delta_K=(\delta_K,\zeta_K)$ and $\delta_K=(\zeta\circ\Pi,\widehat K)$. 
By Lemma \ref{Kcontin}, the continuity of $\widehat K$ follows by the fact that $K$ is assumed to be continuous. 
Then  
it is clear that the only thing we need to show is that point~\eqref{invertibility} 
of the statement implies point~\eqref{open} of Theorem~\ref{continuzeta}. 
In order to do so fix $s\in Z$. 
Now observe that since the pair 
$(\cdot\mid\cdot)_{s^{-*},s}$ is a strong duality pairing (see Remark~\ref{sesqui}) 
there exists a bounded linear operator 
$\theta_{s^{-*}}\colon\Hc^K\to D_s$ satisfying
$$
(\widehat K(\eta)\mid h)_{\Hc^K}=(\eta\mid\theta_{s^{-*}}h)_{s^{-*},s}\qquad \forall\eta\in D_s\ , \forall h\in\Hc^K,
$$
see Lemma~3.5 in \cite{BG08}. 
In fact, it turns out that the operator $\theta_{s^{-*}}$ restricted to $\Hc_0^K$ coincides with the evaluation mapping at $s$, see the proof of Proposition~3.7 in \cite{BG08}. 
Thus under the assumption~\eqref{invert1}
 we get, for every $\xi\in D_s$,
$$
\Vert\xi\Vert^2
\leq\Vert K(s,s)^{-1}\Vert^2\Vert K(s,s)\xi\Vert_{D_s}^2 
=\Vert K(s,s)^{-1}\Vert^2  \Vert \theta_{s^{-*}}(K_\xi)\Vert_{D_s}^2
\leq C(s)\Vert\widehat K(\xi)\Vert_{\Hc^K}^2.
$$
This implies that $\widehat{K}|_{D_s}\colon  D_s\to\Hc^K$ is injective with closed range 
for all $s\in Z$, and the theorem follows.
\end{proof}

\begin{remark}\label{U8-1}
\normalfont
Assumptions~\eqref{Kcontinuando} and~\eqref{invertibility} in the preceding theorem are automatically satisfied 
in many important situations: for instance, when $\Pi$ is a line bundle and $K(s,s)\ne 0$, or when 
$K=K^\Rc$ is the canonical kernel associated with a bundle in the category $\Rg$, and with a transfer mapping $\Rc$. 
In fact, in this last case, if $s\in Z$, $\xi\in D_s$ and $\eta\in D_{s^{-*}}$ then for all $s\in Z$ 
we have $K(s,s)=\id$ since 
$$
(K(s,s)\xi\mid\eta)_{s,s^{-*}}=((\Rc_{s^{-*}})^{-*}(\Rc_s\xi)\mid\eta)_{s,s^{-*}}
=(\Rc_s\xi\mid\Rc{s^{-*}}\eta)_\Hc=(\xi\mid\eta)_{s,s^{-*}}.
$$
In these cases the typical fiber of the corresponding bundle is Hilbertable, 
see Remark~\ref{U8}.
\qed
\end{remark}

\begin{remark}\label{inn}
\normalfont
The proof of Corollary~\ref{cor_invert} only requires the injectivity of $\widehat K$ on $D_s$, $s\in Z$. 
So the corollary remains true if one replaces~\eqref{invert} with the weaker condition~\eqref{open} 
of Theorem~\ref{continuzeta}.
\qed
\end{remark}

Assumption~\eqref{localsim} in Theorem~\ref{contpull} holds for objects $(\Pi,G)$ in the category $\GLH$ under a mild condition, 
as we are going to see:
Let $(\Pi,G)$ be an object of the category $\GLH$, with 
$\Pi\colon D\to Z$ under the action of $G$ through 
$\mu\colon G\times D\to D$ and $\nu\colon G\times Z\to Z$. 
Let $K$ be a $(-*)$-reproducing kernel on $(\Pi,G)$ in the sense of Definition~\ref{reprogroup}.

\begin{lemma}\label{zetaorbits}
 For $\zeta_K(s)=\overline{\widehat K(D_s)}$, $s\in Z$, we have 
$$
u\cdot\zeta_K(s)=\zeta_K(\nu(u,s))\qquad\forall u\in G, \forall s\in Z.
$$
\end{lemma}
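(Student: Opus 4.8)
The plan is to realise both sides of the asserted identity as images of the single closed subspace $\overline{\widehat K(D_s)}$ under a bounded invertible operator on $\Hc^K$ that implements the action of $u$, thereby reducing the statement to the fibrewise identity $\mu(u,D_s)=D_{\nu(u,s)}$. Since $\mu$ and $\nu$ are group actions, the operator $\mu(u,\cdot)|_{D_s}\colon D_s\to D_{\nu(u,s)}$ is a linear bijection with inverse $\mu(u^{-1},\cdot)|_{D_{\nu(u,s)}}$, so that $\mu(u,D_s)=D_{\nu(u,s)}$; this is the geometric input, and everything else is the transport of this equality to $\Gr(\Hc^K)$.

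First I would record the intertwining relation $\pi^K(u)\,\widehat K(\xi)=\widehat K(\mu(u,\xi))$ for all $\xi\in D$ and $u\in G$, where $\pi^K$ denotes the representation of $G$ on $\Hc^K$ through which $G$ acts on $\Gr(\Hc^K)$ (so that $u\cdot\Sc=\pi^K(u)\Sc$). This relation is a direct consequence of the equivariance \eqref{kgroup} of $K$ on $(\Pi,G)$: writing $\widehat K(\xi)=K_\xi=K(\cdot,s)\xi$ for $\xi\in D_s$ and evaluating $\widehat K(\mu(u,\xi))=K_{\mu(u,\xi)}$ at an arbitrary $t\in Z$, one gets
$$K_{\mu(u,\xi)}(t)=K(t,\nu(u,s))\mu(u,\xi)=\mu\bigl(u,K(\nu(u^{-1},t),s)\xi\bigr)=\mu\bigl(u,K_\xi(\nu(u^{-1},t))\bigr),$$
which is exactly $\pi^K(u)K_\xi$ for the action $(\pi^K(u)F)(t)=\mu(u,F(\nu(u^{-1},t)))$ on sections. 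Combining this with the fibre identity of the previous paragraph yields $\widehat K(D_{\nu(u,s)})=\widehat K(\mu(u,D_s))=\pi^K(u)\,\widehat K(D_s)$.

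Finally, since $\pi^K(u)\in\GL(\Hc^K)$ is a homeomorphism of $\Hc^K$, it commutes with the closure operation, and hence
$$\zeta_K(\nu(u,s))=\overline{\widehat K(D_{\nu(u,s)})}=\overline{\pi^K(u)\,\widehat K(D_s)}=\pi^K(u)\,\overline{\widehat K(D_s)}=\pi^K(u)\,\zeta_K(s)=u\cdot\zeta_K(s),$$
as claimed. I expect the only genuine point to check — and the place where \eqref{kgroup} is really needed beyond the pointwise computation above — to be the assertion that $\pi^K(u)$ is a well-defined bounded invertible operator on $\Hc^K$, so that the $G$-action on $\Gr(\Hc^K)$ is meaningful and the passage $\overline{\pi^K(u)A}=\pi^K(u)\overline A$ is legitimate. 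This amounts to viewing $\Theta_u=(\mu(u,\cdot),\nu(u,\cdot))$ as an invertible morphism in $\Kern$ from $K$ to itself and setting $\pi^K(u)=\Hc^{\Theta_u}$: its boundedness is the estimate \eqref{hom} for $\Theta_u$ (supplied by \eqref{kgroup}), while invertibility, with $\pi^K(u)^{-1}=\pi^K(u^{-1})$, follows from the functoriality \eqref{funct}.
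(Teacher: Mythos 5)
Your proof is correct and takes essentially the same route as the paper's: both establish the intertwining $K_{\mu(u,\xi)}=u\cdot K_\xi$ from \eqref{kgroup} (you by pointwise evaluation, the paper by pairing against the $K_\eta$ and density), combine it with the fibre identity $\mu(u,D_s)=D_{\nu(u,s)}$, and then commute the invertible action of $u$ on $\Hc^K$ with the closure. Your explicit flagging of the boundedness of $\pi^K(u)$ on $\Hc^K$ is a point the paper passes over with a brief appeal to continuity of the action on sections, so your version is, if anything, slightly more careful on the one step that actually needs justification.
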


\begin{proof}
By definition $\Pi(\mu(u,\xi))=\nu(u,\Pi(\xi))$ for all $u\in G$ and $\xi\in D$. 
From this, it follows easily that $D_{\nu(u,s_0)}=\mu(u,D_s)$ for all 
$u\in G$ and $s\in Z$. 

We now claim that the group $G$ is taken into $\Gc=\GL(\Hc^K)$ through the natural action 
(which we denote in this proof by $\lq\lq \cdot "$) of $G$ 
on the section space $\Gamma(Z,D)$ of the bundle $\Pi$. So in particular 
$K_{\mu(u,\xi)}=u\cdot K_\xi$, for $u\in G$ and $\xi\in D_s$.
To see this, notice that, given $u\in G$ and $s\in Z$, for any $t\in Z$ and $\eta\in D_t$,
$$ 
\begin{aligned}
(K_{\mu(u,\xi)}\mid K_\eta)
&:=(K(t^{-*},\nu(u,s))\mu(u,\xi)\mid\eta)_{t^{-*},t} 
=(\mu(u,K(\nu(u^{-1},t^{-*}),s)\xi)\mid\eta)_{t^{-*},t} \\ 
&=(\mu(u,K_\xi(\nu(u^{-1},t^{-*})))\mid\eta)_{t^{-*},t}=(u\cdot K_\xi\mid K_\eta)
\end{aligned}
$$
by~\eqref{kgroup}.
By the density of the linear combinations of sections $K_\eta$ in $\Hc^K$ 
we obtain the wished-for relation $K_{\mu(u,\xi)}=u\cdot K_\xi$, for $u\in G$ and $\xi\in D_{s}$.

Let $u\in G$ again. 
Since the action of $u$ on the space 
$\Gamma(Z,D)$ is continuous we have 
$u\cdot\overline{\widehat K(D_s)}
\subseteq\overline{u\cdot \widehat K(D_s)}$ and then
$u^{-1}\cdot\overline{u\cdot \widehat K(D_s)}\subseteq
\overline{(u^{-1}u)\cdot \widehat K(D_s)}=\overline{\widehat K(D_s)}$. 
Both inclusions give us $u\cdot\overline{\widehat K(D_s)}
=\overline{u\cdot \widehat K(D_s)}$. 
In consequence,  
$$
\zeta_K(\nu(u,s))=\overline{\widehat K(D_{\nu(u,s)})}=
\overline{\widehat K(\mu(u,D_s))}=
\overline{\widehat u\cdot K(D_s)}=u\cdot\zeta_K(s).
$$
\end{proof}

\begin{corollary} 
Let $(\Pi,G)$ be an object in the category $\normalfont {\GLH}$, and let $K$ be a continuous 
reproducing kernel 
on $\Pi$ in this category. 
Suppose, in addition to the assumption of Theorem~\ref{U6},
that for every $s\in Z$ the isotropy group $G_s:=\{u\in G\mid\nu(u,s)=s\}$ 
is a Lie subgroup of $G$ and that the mapping $K(s,s)$ is invertible. 
Then the bundle morphism $\Delta_K$ is continuous. 
\end{corollary}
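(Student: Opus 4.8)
The plan is to deduce the statement from Theorem~\ref{contpull} by checking its three hypotheses for the present kernel $K$. Hypothesis~\eqref{Kcontinuando} there is precisely the assumed continuity of $K$, and hypothesis~\eqref{invertibility} is the assumed invertibility of $K(s,s)$ for every $s\in Z$; both are granted by the hypotheses of the corollary. Everything therefore reduces to verifying hypothesis~\eqref{localsim}, that is, that $\zeta_K\colon Z\to\Gr(\Hc^K)$ is locally similar-valued. Once this is in hand, Theorem~\ref{contpull} immediately gives the continuity of $\Delta_K$.

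To establish local similarity I would first exploit the compatibility of $\zeta_K$ with the group action recorded in Lemma~\ref{zetaorbits}, namely $\zeta_K(\nu(u,s))=u\cdot\zeta_K(s)$ for all $u\in G$ and $s\in Z$, where $u\cdot$ denotes the natural action of $G$ on $\Gr(\Hc^K)$ induced by invertible operators on $\Hc^K$. Since these operators are invertible, this identity shows that $\zeta_K$ takes mutually similar values along every $G$-orbit in $Z$. Fixing $s_0\in Z$, I would then use the hypothesis that the isotropy group $G_{s_0}$ is a Banach-Lie subgroup of $G$ to produce, near the base point, a smooth local section $\sigma$ of the orbit map $g\mapsto\nu(g,s_0)$ with $\sigma(s_0)=\1$; for $s$ in the domain of $\sigma$ one then has $\zeta_K(s)=\sigma(s)\cdot\zeta_K(s_0)$, so all such values are similar to $\zeta_K(s_0)$.

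The main obstacle is that the local section coming from the Lie-subgroup hypothesis only covers a neighborhood of $s_0$ \emph{inside the orbit} $\Oc_{s_0}$, whereas local similarity requires a neighborhood in $Z$. To bridge this gap I would combine the orbit-wise similarity above with the continuity analysis already available under hypotheses~\eqref{Kcontinuando} and~\eqref{invertibility}, as in the proof of Theorem~\ref{contpull}. Choosing a trivialization $\Psi_0\colon V\times\Ec\to\Pi^{-1}(V)$ over a neighborhood $V$ of $s_0$, the family $\widetilde K_V(s):=\widehat K(\Psi_0(s,\cdot))\colon\Ec\to\Hc^K$ is norm-continuous by the estimate in the proof of Lemma~\ref{Kcontin}, while the invertibility of $K(s,s)$ forces each $\widetilde K_V(s)$ to be injective with closed range equal to $\zeta_K(s)$. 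Writing $\Hc_0:=\zeta_K(s_0)$, $p_{\Hc_0}$ for the orthogonal projection of $\Hc^K$ onto $\Hc_0$, and $\widehat K_0(s):=\widetilde K_V(s)\circ\widetilde K_V(s_0)^{-1}$, one obtains a norm-continuous family with $\widehat K_0(s_0)$ equal to the inclusion of $\Hc_0$; then $T_s:=\widehat K_0(s)\,p_{\Hc_0}+(\id-p_{\Hc_0})$ is a norm-continuous family in $\Bc(\Hc^K)$ with $T_{s_0}=\id$, hence invertible for $s$ near $s_0$, and satisfies $T_s(\Hc_0)=\zeta_K(s)$. This exhibits $\zeta_K(s)$ as similar to $\zeta_K(s_0)$ on a genuine neighborhood of $s_0$ in $Z$, which completes the verification of hypothesis~\eqref{localsim} and, with it, the proof.
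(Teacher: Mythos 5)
Your reduction to hypothesis~\eqref{localsim} of Theorem~\ref{contpull} is exactly the paper's first step, and your proof is correct, but the way you verify local similarity is genuinely different from the paper's. The paper's entire verification is one sentence: since each isotropy group $G_s$ is a Lie subgroup, the orbit $\{\nu(u,s)\mid u\in G\}$ is an \emph{open} subset of $Z$, and Lemma~\ref{zetaorbits} shows that $\zeta_K$ takes mutually similar values along each orbit (the group acting by invertible operators on $\Hc^K$), so every point has an open neighborhood --- its own orbit --- on which $\zeta_K$ is similar-valued. You correctly observe that a local cross-section of the orbit map only produces a neighborhood \emph{inside} the orbit, and you bridge the gap with the operator $T_s=\widehat K_0(s)\,p_{\Hc_0}+(\id-p_{\Hc_0})$, whereas the paper bridges it by asserting openness of the orbits in $Z$ itself. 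Your $T_s$ argument checks out: $T_{s_0}=\id_{\Hc^K}$, the family is norm-continuous because $\widetilde K_V$ is (this is the estimate in the proof of Theorem~\ref{continuzeta}, which needs only the continuity of $K$), so $T_s\in\GL(\Hc^K)$ for $s$ near $s_0$, and $T_s(\zeta_K(s_0))=\Ran\widetilde K_V(s)=\zeta_K(s)$ since invertibility of $K(s,s)$ makes $\widetilde K_V(s)$ bounded below with closed range. What this buys is worth noting: your verification uses neither the group action nor the Lie-subgroup hypothesis, so it actually shows that hypotheses~\eqref{Kcontinuando} and~\eqref{invertibility} of Theorem~\ref{contpull} already force local similarity of $\zeta_K$ --- a stronger conclusion than the corollary --- and in particular it renders the first half of your own write-up (the orbit-wise similarity via Lemma~\ref{zetaorbits} and local sections) superfluous. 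The paper's route is shorter but leans entirely on the openness of the orbits, which is where its Lie-subgroup hypothesis does its work.
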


\begin{proof} 
In view of previous results, we only have to verify that the mapping $\zeta$ is locally similar. 
But this is a clear consequence of the above lemma, since the fact that $G_s$ is a Lie subgroup 
entails that $\{\nu(u,s)\mid u\in G\}$ is an open subset of Z for every $s\in Z$.
\end{proof}

\begin{remark}\label{U9}
\normalfont
Let us discuss briefly the condition in Theorem~\ref{U6} on 
the existence of a suitable involutive diffeomorphism 
of $\Gr(\Hc^K)$ onto itself in the case when the base $Z$ 
of the bundle $\Pi\colon D\to Z$ is \textit{connected}. 

Assuming that
 the mapping 
$\zeta_K\colon Z\to\Gr(\Hc^K)$ is continuous, 
the image of this map will be a connected subset of 
$\Gr(\Hc^K)$. 
In particular, $\zeta_K(Z)$ will be contained in a connected component 
of $\Gr(\Hc^K)$. 
It then follows by Remark~4.2(b) in \cite{BG07} that 
there exists a closed linear subspace $\Sc_0$ of $\Hc$ 
such that $\zeta_K(Z)\subseteq\Gr_{\Sc_0}(\Hc)$. 
So condition~\eqref{compatible} of Theorem~\ref{U6} 
actually requires a suitable involutive diffeomorphism 
\textit{only on the unitary orbit} $\Gr_{\Sc_0}(\Hc)$, 
not on the whole $\Gr(\Hc^K)$.

Thus the role of universal reproducing $(-*)$-kernel  
seems to be played by the reproducing $(-*)$-kernels 
induced by $Q_{\Hc}$  
on the restrictions of 
the tautological vector bundle $\Theta_{\Hc}$ 
to the various unitary orbits $\Gr_{\Sc_0}(\Hc)$ 
endowed with \textit{various} $(-*)$-structures, 
for arbitrary Hilbert spaces $\Hc$.

Another situation when we are naturally led to deal with orbits 
$\Gr_{\Sc_0}(\Hc)$ of Grassmannians is that one considered prior to 
Lemma~\ref{zetaorbits}, in the case when 
the action of the group $G$ on $Z$ is transitive. 
Under such a condition, we manage in the next result to remove the assumption about 
the existence of the involutive morphism $\Omega$ satisfying \eqref{compatible} 
(which is a highly {\it non-canonical} assumption). 
In order to do so, we must replace the tautological bundle $\Pi_\Hc$ 
by its complexification $\Pi_\Hc^\C$.  
\qed
\end{remark}

Let $(\Pi,G)$ be an object of the category $\GLH$, as above, with 
$\Pi\colon D\to Z$, $\mu\colon G\times D\to D$, 
$\nu\colon G\times Z\to Z$, such that the action $\nu$ is \emph{transitive} on $Z$.
Assume that there exists $s_0\in Z$ with $s_0^{-*}=s_0$. 
Let $K$ be a reproducing $(-*)$-kernel on $\Pi$ satisfying the property~\eqref{kgroup}, 
and let $\Hc^K$ be its reproducing kernel Hilbert space. 
For $\widehat K$ as in Theorem~\ref{U6}, set 
$\Sc_0:=\overline{\widehat K(D_{s_0})}\subseteq\Hc^K$. 
Recall that we denote the vector bundle $\Gc\times_{\Gc(p)}\Sc_0\to\Gc/\Gc(p)$ by
$\Pi_{\Hc^K}^\C$. 
Here $p=p_{\Sc_0}$ and $\Gc=\GL(\Hc^K)$. 
Let in turn $Q_{\Hc^K}^\C$ denote the kernel associated with the bundle $\Pi_{\Hc^K}^\C$ 
and the projection $p$ as in the subsection~\ref{subsect1.3}. 
%Recall the definition of a $(-*)$-holomorphic like-Hermitian structure, 
%given in Definition~\ref{like}.

\begin{theorem}\label{U10}
In the preceding setting, there exists a vector bundle morphism 
$\widetilde\Delta_K=(\tilde\delta_K,\tilde\zeta_K)$ from $\Pi$ into 
$\Pi_{\Hc^K}^\C$ such that $K$ is equal to the pull-back of the 
reproducing $(-*)$-kernel $Q_{\Hc^K}^\C$, that is, 
$$
K=(\widetilde\Delta_K)^*Q_{\Hc^K}^\C.
$$   
In addition, if 
\begin{enumerate}
\item the isotropy group at the point $s_0=s_0^{-*}\in Z$ 
is a complex Banach-Lie subgroup of $G$ and 
\item the reproducing $(-*)$-kernel $K$ is holomorphic of the second kind,  
%\item\label{likehol} the like-Hermitian structure on $\Pi$ is $(-*)$-holomorphic,
\end{enumerate}
 then the vector bundle morphism 
$\widetilde\Delta_K$ is holomorphic. 
\end{theorem}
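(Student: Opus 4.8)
The plan is to build the morphism $\widetilde\Delta_K=(\tilde\delta_K,\tilde\zeta_K)$ out of the quantization map $\zeta_K$ and its complexification, to verify the pull-back identity by reducing it to the identity $K=K^{\widehat K}$, and finally to deduce holomorphy from the hypotheses (1) and (2). First I would record the homomorphism $\rho\colon G\to\Gc:=\GL(\Hc^K)$ produced in the proof of Lemma~\ref{zetaorbits}, which satisfies $\rho(u)K_\xi=K_{\mu(u,\xi)}$; using \eqref{prods} and \eqref{kgroup} together with conditions (ii) and (iii) of Definition~\ref{relationship}, one checks that $\rho$ preserves involutions, $\rho(u)^*=\rho(u^*)$, so that $\rho$ is a $*$-representation on $\Hc^K$. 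Since $\nu$ is transitive and $s_0^{-*}=s_0$, every $s\in Z$ can be written $s=\nu(u,s_0)$, whence Lemma~\ref{zetaorbits} gives $\zeta_K(s)=\rho(u)\Sc_0$, while condition (ii) of Definition~\ref{relationship} yields $s^{-*}=\nu(u^{-*},s_0)$ and hence $\zeta_K(s^{-*})=\rho(u)^{-*}\Sc_0$. Consequently the pair $(\zeta_K(s),\zeta_K(s^{-*}))=(\rho(u)\Sc_0,\rho(u)^{-*}\Sc_0)$ lies in $\Gc/\Gc(p)$ (described as $\{(v\Sc_0,v^{-*}\Sc_0):v\in\Gc\}$), so that $\tilde\zeta_K(s):=(\zeta_K(s),\zeta_K(s^{-*}))$ is a well-defined map $Z\to\Gc/\Gc(p)$ intertwining the two involutions. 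The fiber map $\tilde\delta_K$ sends $\xi\in D_s$ to the element of the fiber of $\Pi_{\Hc^K}^\C$ over $\tilde\zeta_K(s)$ that corresponds, under the transfer mapping $\Rc^\C([(v,f)])=vf$, to $\widehat K(\xi)=K_\xi\in\zeta_K(s)$; equivalently $\Rc^\C\circ\tilde\delta_K=\widehat K$.

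For the pull-back identity I would use the composition rule for transfer kernels. Recalling from Proposition~\ref{posifunct}(iii) and Remark~\ref{kapi} that $Q_{\Hc^K}^\C=K^{\Rc^\C}$, and that for a morphism with $\zeta(s^{-*})=\zeta(s)^{-*}$ one has $(\delta_{s^{-*}})^{-*}\circ(\Rc^\C_{\zeta(s)^{-*}})^{-*}=(\Rc^\C_{\zeta(s)^{-*}}\circ\delta_{s^{-*}})^{-*}$, the definition \eqref{pullK} of the pull-back gives $(\widetilde\Delta_K)^*Q_{\Hc^K}^\C=K^{\Rc^\C\circ\tilde\delta_K}=K^{\widehat K}$. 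Thus it remains to verify $K^{\widehat K}=K$: for $\eta\in D_t$ and $\xi'\in D_{s^{-*}}$, using the definition of the quasi-adjoint and \eqref{prods}, one finds $((\widehat K_{s^{-*}})^{-*}\widehat K_t\eta\mid\xi')_{s,s^{-*}}=(K_\eta\mid K_{\xi'})_{\Hc^K}=(K(s,t)\eta\mid\xi')_{s,s^{-*}}$, and since $(\cdot\mid\cdot)_{s,s^{-*}}$ is a strong duality pairing this forces $(\widehat K_{s^{-*}})^{-*}\widehat K_t=K(s,t)$. This settles the algebraic assertion $K=(\widetilde\Delta_K)^*Q_{\Hc^K}^\C$.

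For holomorphy, assume now (1) and (2). Since $\Pi_{\Hc^K}^\C$ is a holomorphic vector bundle whose projection is a holomorphic submersion and $\tilde\zeta_K\circ\Pi=\Pi_{\Hc^K}^\C\circ\tilde\delta_K$, it suffices to prove that $\tilde\delta_K$ is holomorphic (composing with a local holomorphic section of $\Pi$ then gives holomorphy of $\tilde\zeta_K$). The key preliminary step is that $\widehat K\colon D\to\Hc^K$ is holomorphic. Because $\spann\{K_\eta:\eta\in D\}$ is dense in $\Hc^K$ and $\widehat K$ is locally bounded (being continuous by Lemma~\ref{Kcontin}), it is enough to establish weak holomorphy: for fixed $\eta\in D_r$ and $\xi\in D_s$, relation \eqref{prods} gives $(\widehat K(\xi)\mid K_\eta)_{\Hc^K}=(K(r^{-*},\Pi(\xi))\xi\mid\eta)_{r^{-*},r}$, and the map $\xi\mapsto K(r^{-*},\Pi(\xi))\xi$ is holomorphic into the fixed fiber $D_{r^{-*}}$ precisely because $K$ is holomorphic of the second kind; composing with the bounded functional $(\cdot\mid\eta)_{r^{-*},r}$ shows $\xi\mapsto(\widehat K(\xi)\mid K_\eta)$ is holomorphic. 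Hence $\widehat K$ is holomorphic.

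Finally I would transfer holomorphy to $\tilde\delta_K$ through the homogeneous structure. By (1) the isotropy group $G_{s_0}$ is a complex Banach-Lie subgroup, so $Z\simeq G/G_{s_0}$ and, as in the proof of Theorem~\ref{compare}, a map out of an associated bundle is holomorphic if and only if its lift to $G\times\Sc_0$ is, the bundle projection being a holomorphic submersion (Corollary~8.3(ii) in \cite{Up85}). The lift of $\tilde\delta_K$ is expressed through $\rho$ and $\widehat K$, and holomorphy of $\rho\colon G\to\Gc$ follows from holomorphy of the action $\mu$ and of $\widehat K$ via $\rho(u)K_\xi=\widehat K(\mu(u,\xi))$, again by a weak-holomorphy plus local-boundedness argument. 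I expect the main obstacle to be exactly this last point: upgrading the pointwise (strong) holomorphy of $u\mapsto\rho(u)$ and of $\tilde\delta_K$ to holomorphy with respect to the operator-norm manifold structures carried by $\Gc/\Gc(p)$ and by the total space of $\Pi_{\Hc^K}^\C$. This is handled by combining density of $\spann\{K_\xi\}$ with the local uniform boundedness of the operators $\rho(u)$ near each point, which promotes weak/strong holomorphy to genuine holomorphy into the norm-based Grassmannian, and it is here that hypotheses (1) and (2) are essential.
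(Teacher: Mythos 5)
Your proposal follows essentially the same route as the paper: the same map $\tilde\zeta_K(s)=(\zeta_K(s),\zeta_K(s^{-*}))\equiv u\,\Gc(p)$ built from Lemma~\ref{zetaorbits}, a fiber map equivalent to the paper's $\tilde\delta_K(\xi)=[(u,K_{\mu(u^{-1},\xi)})]$ (your characterization $\Rc^\C\circ\tilde\delta_K=\widehat K$ amounts to the same thing), the pull-back identity reduced to $(\widehat K_{s^{-*}})^{-*}\circ\widehat K_t=K(s,t)$ exactly as in the paper's displayed computation, and holomorphy of $\widehat K$ via weak holomorphy plus continuity using hypothesis (2), with hypothesis (1) supplying holomorphic local sections of the orbit map. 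Your closing remarks on upgrading strong to norm holomorphy of $\rho$ address a point the paper passes over silently, but this is a refinement of, not a departure from, the paper's argument.
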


\begin{proof}
As it was seen in Lemma \ref{zetaorbits}, $\zeta_K(\nu(u,s_0))=u\cdot\zeta_K(s_0)$ for every $u\in G$.
Let us proceed to constructing the morphism $\widetilde\Delta_K$. 
Define, for 
$s=\nu(u,s_0)$ in $Z$,
\begin{equation}\label{zeta}
\tilde\zeta_K(s):=(\zeta_K(\nu(u,s_0)),\zeta_K(\nu(u^{-*},s_0)))
=(u\cdot\Sc_0,u^{-*}\cdot\Sc_0)\equiv u\ \Gc(p).
\end{equation}
Then, on account of the definition of the involution $-*$ in $\Gc(p)$, 
we have
$$ 
\tilde\zeta_K(s)^{-*}=[u\cdot\Gc(p)]^{-*}:=u^{-*}\cdot\Gc(p)
=\tilde\zeta_K(\nu(u^{-*},s_0))
=\tilde\zeta_K(\nu(u,s_0)^{-*})=\tilde\zeta_K(s^{-*}).
$$
Take $\xi\in D$. 
Then $\xi\in D_s$ for some (unique) $s=\nu(u,s_0)$, where 
$u\in G$. 
Define $\tilde\delta_K(\xi)$ in $\Gc\times_{\Gc(p)}\Sc_0$  by
\begin{equation}\label{delta}
\tilde\delta_K(\xi)=[(u,K_{\mu(u^{-1},\xi)})].
\end{equation}
It is readily seen that $\tilde\delta_K$ is well defined and then that 
$(\tilde\delta_K,\tilde\zeta_K)$ is a morphism between 
the like-Hermitian vector bundles $\Pi$ and $\Pi_{\Hc_K}^\C$. 
Put for a while $\delta=\tilde\delta_K$ and 
$\zeta=\tilde\zeta_K$ for simplicity.

For 
$s=\nu(u,s_0),\ t=\nu(v,s_0)\in Z$ and $\xi\in D_{s^{-*}}$, $\eta\in D_t$, we have
$$
\begin{aligned}
&(((\delta_{s^{-*}} )^{-*}\circ Q_{\Hc^K}^\C(\zeta(s),\zeta(t))\circ\delta_t )\eta
\mid\xi )_{s,s^{-*}} \\
&=((\delta_{s^{-*}} )^{-*}
[(u,p( u^{-1}\cdot v\cdot K_{\mu(v^{-1},\eta)}))]   
\mid\xi )_{s,s^{-*}} \\
&=([(u,p(u^{-1}\cdot v\cdot K_{\mu(v^{-1},\eta)}))]
\mid[u^{-*},K_{\mu(u^{-*},\xi)})])_{u\cdot\Gc(p),u^{-*}\cdot\Gc(p)} \\
&=(p_{\Sc_0}(u^{-1}\cdot v\cdot K_{\mu(v^{-1},\eta)})
\mid K_{\mu(u^{-*},\xi)})_{\Hc^K} 
=(K(s,t)\eta\mid\xi)_{\Hc^K},
\end{aligned}
$$
This implies that $K=(\widetilde\Delta_K)^*\ Q_{\Hc^K}^\C$, that is, $K$ is the pull-back of $Q_{\Hc^K}^\C$ 
by the morphism~$\widetilde{\Delta}_K$.

To prove the second part of the statement, recall that if the isotropy 
group at $s_0\in Z$ is a complex Banach-Lie subgroup of $G$, 
then the orbit mapping 
$$
u\mapsto\nu(u,s_0),\quad G\to Z
$$
has holomorphic local cross-sections around every point $s\in Z$. 
Therefore, in the above construction of the vector bundle morphism 
$\widetilde\Delta_K=(\tilde\delta_K,\tilde\zeta_K)$ 
(see formulas \eqref{zeta}~and~\eqref{delta}), 
the element $u\in G$ can be chosen to depend holomorphically on $s\in Z$ 
around an arbitrary point in $Z$. 
This remark implies at once that 
the mapping $\tilde\zeta_K$ is holomorphic and that in order to prove that 
the component $\tilde\delta_K$ is holomorphic 
as well, it will be enough to check that the mapping 
$$
\widehat{K}\colon \xi\mapsto K_\xi,\ D\to\Hc^K
$$
is holomorphic. Since $K$ is continuous, by Lemma \ref{Kcontin} we have that 
$\widehat{K}$ is continuous.
 Then, by Corollary~A.III.3 in Appendix~III of \cite{Ne00}, 
it will be sufficient to show that for every $h\in\Hc^K_0$ the function 
$(\widehat{K}(\cdot)\mid h)_{\Hc^K}$ is holomorphic on~$D$. 
Thus take $\xi,\eta\in D$, $s=\Pi(\xi)$, $t=\Pi(\eta)$. 
Then, by \eqref{prods}~and~\eqref{exchange},
$$
(\widehat{K}(\xi)\mid K_\eta)_{\Hc^K}
=(K(t^{-*},s)\xi\mid \eta)_{t^{-*},t}
%=(\xi\mid K(s^{-*},t)\eta)_{s,s^{-*}}
.
$$
%Obviously, the identity $F_1(\xi)=\xi$ is holomorphic with respect to $\xi\in D$. 
%Moreover, since $K$ is holomorphic, the function 
%$F_2(\xi):=K(\Pi(\xi)^{-*},t)\eta$, $\xi\in D$, has the same character, 
%holomorphic or antiholomorphic, as the involutive diffeomorphism $s\mapsto s^{-*}$ in $Z$ does. 
Hence, by the hypothesis that the kernel $K$ is holomorphic of the second kind (see Definition~\ref{kernel}), 
we obtain that
$\widehat{K}\colon D\to\Hc^K$ is holomorphic, 
and this concludes the proof. 
\end{proof}

\begin{remark} 
\normalfont
There is a close relationship between Theorem~\ref{compare} and Theorem~\ref{U10}, 
and yet they are different from each other, for in Theorem~\ref{U10} it is not assumed the existence of 
a transfer mapping (relating the bundle $\Pi\colon D\to Z$ with some Hilbert space $\Hc$). 
Instead, one needs to consider in Theorem~\ref{U10} the Hilbert space of sections $\Hc^K$. 
Is there some transfer mapping, say $\Rc_K$, from $D$ into~$\Hc^K$? 
A natural candidate seems to be the mapping
$\Rc_K\colon\xi\mapsto[(u,K_{\mu(u^{-1},\xi)})]\mapsto u\cdot K_{\mu(u^{-1},\xi)}$.
Since for all $u\in G$, $\xi\in D$ we have $u\cdot K_{\mu(u^{-1},\xi)}=K_\xi$ such a map is $\widehat K$. 
However $\widehat K$ need not be an isometry, or even injective, in general. 
(Recall that 
$(\widehat K(\xi)\mid\widehat K(\eta))_{\Hc^K}=(K(s,s)\xi\mid\eta)_{s,s^{-*}}$ for 
$s\in Z$, $\xi\in D_s$, $\eta\in D_{s^{-*}}$.)
\qed 
\end{remark}

\section{Applications of the pull-back operation on reproducing $(-*)$-kernels}\label{sect6}

The first application of the pull-back operation works out 
the precise relationship between reproducing $(-*)$-kernels 
on homogeneous vector bundles constructed as in the subsection~\ref{subsect1.4}. 
The resulting result, part (d), illustrates Proposition~\ref{cat5}.

\begin{proposition}\label{cat7}
Given two $C^*$-algebras $\1\in B\subseteq A$ with  
a conditional expectation $E\colon A\to B$, 
assume that we have a tracial state 
$\varphi\colon A\to{\mathbb C}$ (i.e., $\varphi(a_1a_2)=\varphi(a_2a_1)$  
for every $a_1a_2\in A$) such that $\varphi\circ E=\varphi$.
For $X\in\{A,B\}$ denote by $G_X$ the group of invertible elements of $X$, 
by $\Hc_X$ the Hilbert space obtained 
by the GNS construction out of the state $\varphi|_X$ of $X$, 
and by $C_X\colon\Hc_X\to\Hc_X$ the antilinear isometric involutive 
operator defined by the involution of $X$. 
Also denote by $P\colon\Hc_A\to\Hc_B$ the orthogonal projection and by 
$\lambda,\rho,\pi\colon A\to\Bc(\Hc_A)$
the representations defined by 
$$
\lambda(u)[f]=[uf],\quad \rho(u)[f]=[fu^{-1}],\quad 
\text{and}\quad \pi(u)[f]=\lambda(u)\rho(u)[f]=[ufu^{-1}]
$$
whenever $u\in G_B$ and $f\in B$, 
where we denote by $f\mapsto[f]$ the natural map $B\to\Hc_B$.

Now for $\alpha\in\{\lambda,\rho,\pi\}$ 
denote by $K_\alpha$ the corresponding reproducing $(-*)$-kernel 
on the homogeneous bundle $G_A\times_\alpha\Hc_B\to G_A/G_B$. 
Then the following assertions hold: 
\begin{itemize} 
\item[{\rm(a)}] There exist well-defined diffeomorphisms  
$$
\delta_1\colon G_A\times_\lambda\Hc_B\to G_A\times_\rho\Hc_B,\quad 
\delta_2\colon  G_A\times_\rho\Hc_B\to G_A\times_\lambda\Hc_B,\quad\text{and}\quad
\delta_3\colon  G_A\times_\pi\Hc_B\to G_A\times_\pi\Hc_B,$$
given, each of them, by the correspondence 
$[(u,f)]\mapsto[(u^{-*},C_B(f))]$. 
\item[{\rm(b)}] For $j=1,2,3$ denote by $\Theta_j$ the pair consisting 
of the mappings $\delta_j$ and $uG_B\mapsto u^{-*}G_B$. 
Then $\Theta_j$ is an adjointable antimorphism of 
like-Hermitian bundles. 
\item[{\rm(c)}] We have $\Theta_1^*K_\rho=K_\lambda$, 
$\Theta_2^*K_\lambda=K_\rho$, and $\Theta_3^*K_\pi=K_\pi$. 
\item[{\rm(d)}] There exists an involutive antilinear isometry 
$\overline{\Theta}_3\colon\Hc_{\mathbb C}^{K_\pi}\to\Hc_{\mathbb C}^{K_\pi}$ 
such that 
$\overline{\Theta}_3((K_\pi)_\xi)=(K_\pi)_{\tau(\xi)}$ for all $\xi\in D$. 
\end{itemize}
\end{proposition}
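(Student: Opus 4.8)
The plan is to dispatch the four assertions in order, the decisive structural input throughout being the traciality of $\varphi$, which is exactly what forces $\lambda$ and $\rho$ to be interchanged by the involution while $\pi$ is preserved. As a preliminary I would record that, because $\varphi$ is a trace, each of $\lambda,\rho,\pi$ is a genuine $*$-representation of $G_A$ on $\Hc_A$ (for $\rho$ and $\pi$ this uses $\varphi(xy)=\varphi(yx)$); consequently the three bundles $G_A\times_\alpha\Hc_B\to G_A/G_B$ fall under the construction of subsection~\ref{subsect1.3}, so their like-Hermitian pairings are given by \eqref{strong}, i.e. $([(u,f)]\mid[(u^{-*},g)])_{s,s^{-*}}=(f\mid g)_{\Hc_B}$, and their canonical kernels $K_\alpha$ are given by the explicit formula \eqref{jones}. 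I would also note that $C_B\colon[f]\mapsto[f^*]$ is an antilinear isometric involution precisely because $\varphi$ is tracial, and that the three relevant intertwining relations on $\Hc_B$ are $C_B\lambda(w)=\rho(w^{-*})C_B$, $C_B\rho(w)=\lambda(w^{-*})C_B$, and $C_B\pi(w)=\pi(w^{-*})C_B$ for $w\in G_B$.

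For (a) I would verify that $[(u,f)]\mapsto[(u^{-*},C_B(f))]$ respects the equivalence relations. Replacing $(u,f)$ by $(uw,\alpha(w^{-1})f)$ with $w\in G_B$ and using $(uw)^{-*}=u^{-*}w^{-*}$ together with the intertwinings above, one checks that the image lands in the correct class of the target bundle (the target equivalence being taken with the group element $w^{-*}$); this is the algebraic shadow of the fact that conjugating by $C_B$ and $u\mapsto u^{-*}$ turns left multiplication into right multiplication. Smoothness is clear since $u\mapsto u^{-*}$ is smooth on $G_A$ and $C_B$ is bounded real-linear, and the inverses of $\delta_1,\delta_2$ are each other while $\delta_3$ is its own inverse; hence all three are diffeomorphisms.

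For (b) and (c) I would run the general machinery. The fibrewise maps $(\delta_j)_s\colon[(u,f)]\mapsto[(u^{-*},C_B f)]$ are bijective, and a one-line application of \eqref{strong} and the antilinear isometry of $C_B$ shows that $\Theta_j$ is an isometric antimorphism in the sense of Definition~\ref{isometric}; since the base map $s\mapsto s^{-*}$ is a diffeomorphism, Remark~\ref{morph2} immediately yields that $\Theta_j$ is adjointable with $\Theta_j^{-1}$ an adjoint, which is (b). For (c) I would verify the pull-back identity through \eqref{star_anti}: to obtain $\Theta_1^*K_\rho=K_\lambda$ it suffices to check, for $s=uG_B$, $t=vG_B$, $\eta=[(v,f)]$ and $\xi^{-*}=[(u^{-*},h)]$, that $(K_\lambda(s,t)\eta\mid\xi^{-*})_{s,s^{-*}}$ equals the complex conjugate of $(K_\rho(s^{-*},t^{-*})\delta_1\eta\mid\delta_1\xi^{-*})_{s^{-*},s}$. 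Expanding both sides with \eqref{jones} and \eqref{strong} reduces the left-hand side to $\varphi(h^{*}u^{-1}vf)$ and the right-hand side to $\overline{\varphi(hf^{*}v^{*}u^{-*})}=\varphi(u^{-1}vfh^{*})$, and these agree by traciality. The cases $\Theta_2^*K_\lambda=K_\rho$ and $\Theta_3^*K_\pi=K_\pi$ are entirely analogous (indeed $\Theta_2=\Theta_1^{-1}$ after identifying the bundles, and the $\pi$-case is self-dual). I expect this bookkeeping in (c)—keeping straight the three representations, the $C^*$-involution, $C_B$, the projection $P$, and inserting traciality at exactly the right step—to be the main obstacle; everything else is formal.

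Finally, (d) falls out of (c). Applying Proposition~\ref{cat5}(i) to the antimorphism $\Theta_3$ and the identity $\Theta_3^*K_\pi=K_\pi$ gives $\Theta_3\in\Hom(K_\pi,K_\pi)$ with $\Hc^{\Theta_3}$ an isometry; since $\Theta_3$ is an antimorphism, $\Hc^{\Theta_3}$ is antilinear (Remark~\ref{cat2}), and by that same remark it sends $(K_\pi)_\xi$ to $(K_\pi)_{\delta_3(\xi)}=(K_\pi)_{\tau(\xi)}$, where $\tau$ is the fibrewise antilinear map induced by $\delta_3$. Setting $\overline{\Theta}_3:=\Hc^{\Theta_3}$ and using the functoriality \eqref{funct} with $\Theta_3\circ\Theta_3=\id$ (because $\delta_3^2=\id$ and $s\mapsto s^{-*}$ is an involution) gives $\overline{\Theta}_3^{\,2}=\Hc^{\id}=\id$, so $\overline{\Theta}_3$ is the desired involutive antilinear isometry on $\Hc^{K_\pi}$. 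Equivalently, one may invoke Corollary~\ref{cat6} directly once \eqref{anti} is checked for $\tau=\delta_3$, which is just the fibrewise form of the isometry of $\Theta_3$ established in (b) combined with the Hermitian symmetry of Definition~\ref{like}(b).
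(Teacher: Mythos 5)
Your proposal is correct and follows essentially the same route as the paper: the intertwining relation $C_B\circ\lambda(w)=\rho(w^{-*})\circ C_B$ (the paper's \eqref{inter}) to see that the $\delta_j$ are well defined in (a), Remark~\ref{morph2} for (b), verification of \eqref{star_anti} for (c), and Corollary~\ref{cat6} for (d). The only cosmetic differences are that in (c) you expand both sides down to the trace identity $\varphi(h^{*}u^{-1}vf)=\overline{\varphi(hf^{*}v^{*}u^{-*})}$ whereas the paper stops at the operator-level identity \eqref{re} and verifies it using $P$, \eqref{inter} and the antilinear isometry of $C_B$, and that in (a) the paper justifies smoothness of the induced quotient maps via the submersion argument (Corollary~8.4 in \cite{Up85}), which your ``smoothness is clear'' passes over.
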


\begin{proof} First of all, notice that $\varphi$ being tracial 
the representation $\rho$ is well defined, so is $\pi$.
To prove assertion~(a), 
note that for all $v\in G_B$ and $f\in B$ we have 
\begin{equation}\label{inter}
C_B(\lambda(v^{-1})[f])=[(v^{-1}f)^*]
=[f^*(v^*)^{-1}]=\rho((v^{-*})^{-1})C_B([f]).
\end{equation}
Now, if $(u_1,f_1)\sim_\lambda(u_2,f_2)$ 
then there exists $v\in G_B$ such that $u_2=u_1v$ and 
$f_2=\lambda(v^{-1})f_1$. 
Then the above calculation shows that 
$C_B(f_2)=\rho((v^{-*})^{-1})C_B(f_1)$, 
so that $(u_1^{-*},C_B(f_1))\sim_\rho(u_2^{-*},C_B(f_2))$. 
Thus the mapping 
$\delta_1\colon G_A\times_\lambda\Hc_B\to G_A\times_\rho\Hc_B$ 
is well defined.

Since the projection mappings $G_A\times\Hc_B\to G_A\times_\alpha\Hc_B$ 
are submersions for $\alpha\in\{\lambda,\rho\}$ 
and the mapping $(u,f)\mapsto(u^{-*},C_B(f))$ 
is clearly a diffeomorphism of $G_A\times\Hc_B$ onto itself, 
it then follows that $\delta_1$ is smooth 
(see Corollary~8.4 in \cite{Up85}). 
One can similarly check that 
$\delta_2\colon G_A\times_\rho\Hc_B\to G_A\times_\lambda\Hc_B$ 
is smooth. 
In addition, it is easy to see that the mappings $\delta_1$ and $\delta_2$ 
are inverse to each other, hence they are diffeomorphisms.

The fact that $\delta_3$ is also a well-defined smooth mapping 
follows by a similar reasoning 
based on the calculation
$$
C_B(\pi(v^{-1})[f])=[(v^{-1}fv)^*]=[v^*f^*v^{-*}]
=\pi((v^{-*})^{-1})C_B([f])
$$
that holds whenever $v\in G_B$ and $f\in B$. 
Since $\delta_3\circ\delta_3=\id_{G_A\times_\pi\Hc_B}$, 
it then follows that $\delta_3$ is a diffeomorphism as well.

For assertion~(b) we shall use Remark~\ref{morph2}.
It is clear that for $j=1,2,3$ the mapping 
$\delta_j$ is a fiberwise isomorphism 
of real Banach spaces, so that it will be enough 
to prove that for all $u\in G_A$ and $f\in\Hc_B$ we have
$$
\bigl([(u,f)]\mid[(u^{-*},g)]\bigr)_{uG_B,u^{-*}G_B}
=\overline{\bigl(\delta_j[(u,f)]
  \mid\delta_j[(u^{-*},g)]\bigr)}_{u^{-*}G_B,u^{-*}G_B}.
$$
This equality is equivalent to 
$(f\mid g)_{\Hc_B}=\overline{(C_B(f)\mid C_B(g))}_{\Hc_B}$, 
which is true since $C_B\colon\Hc_B\to\Hc_B$ is an antilinear isometry.

Assertion~(c) can be proved by means of~\eqref{star_anti}. 
For instance, in order to check that $\Theta_1^*K_\rho=K_\lambda$, 
we need to prove that 
for all $u_1,u_2\in G_A$ and $f_1,f_2\in\Hc_B$ we have 
$$
\begin{aligned}
\bigl(K_\lambda(u_1G_B,u_2G_B)[(u_2,f_2)]
&  \mid[(u_1^{-*},f_1)]\bigr)_{u_1G_B,u_1^{-*}G_B} \\
&=\overline{\bigl(K_\rho(u_1^{-*}G_B,u_2^{-*}G_B)\delta_1[(u_2,f_2)]
  \mid\delta_1[(u_1^{-*},f_1)]\bigr)}_{u_1^{-*}G_B,u_1G_B}.
\end{aligned}
$$
In view of the way $\delta_1$ is defined, 
the right-hand side is equal to 
$$
\overline{\bigl(K_\rho(u_1^{-*}G_B,u_2^{-*}G_B)[(u_2^{-*},C_B(f_2))]
  \mid[(u_1,C_B(f_1))]\bigr)}_{u_1^{-*}G_B,u_1G_B}.
$$ 
Now, by taking into account the definitions of $K_\lambda$ and $K_\rho$ 
(see \cite{BG08}), 
it follows that the wished-for equality is equivalent to 
$$
\begin{aligned}
\bigl([(u_1,P(\lambda(u_1^{-1})\lambda(u_2)f_2))]
&  \mid[(u_1^{-*},f_1)]\bigr)_{u_1G_B,u_1^{-*}G_B} \\
& = 
 \overline{([(u_1^{-*},P(\rho((u_1^{-*})^{-1})\rho(u_2^{-*})C_B(f_2)))]
  \mid[(u_1,C_B(f_1))]\bigr)}_{u_1^{-*}G_B,u_1G_B}.
\end{aligned}
$$
This is further equivalent to 
\begin{equation}\label{re}
(P(\lambda(u_1^{-1}u_2)f_2)\mid f_1)_{\Hc_B}
 =\overline{(P(\rho(((u_1^{-*})^{-1}u_2^{-*})C_B(f_2))
     \mid C_B(f_1)))}_{\Hc_B}. 
\end{equation}
Now we have 
$$
\begin{aligned}
(P(\rho(((u_1^{-*})^{-1}u_2^{-*})C_B(f_2))\mid C_B(f_1))_{\Hc_B}
&=(\rho(((u_1^{-*})^{-1}u_2^{-*})C_B(f_2)\mid P(C_B(f_1)))_{\Hc_B} \\
&=(\rho(((u_1^{-1}u_2)^{-*})C_B(f_2)\mid C_B(f_1))_{\Hc_B} \\
&\stackrel{\scriptstyle\eqref{inter}}{=}
 (C_B(\lambda(((u_1^{-1}u_2)^{-*})f_2)\mid C_B(f_1))_{\Hc_B} \\
&=(f_1\mid \lambda(((u_1^{-1}u_2)^{-*})f_2)_{\Hc_B} \\
&=(f_1\mid P(\lambda(((u_1^{-1}u_2)^{-*})f_2))_{\Hc_B} \\
&=\overline{(P(\lambda(((u_1^{-1}u_2)^{-*})f_2)\mid f_1)}_{\Hc_B}
\end{aligned}
$$
which shows that \eqref{re} holds. 
This completes the proof of the fact that $\Theta_1^*K_\rho=K_\lambda$. 

The equalities $\Theta_2^*K_\lambda=K_\rho$ and $\Theta_3^*K_\pi=K_\pi$ 
can be proved similarly. 

To prove assertion~(d), just note that it follows by (c) 
that \eqref{symm} in Corollary~\ref{cat6} 
is satisfied. 
\end{proof}

As regards item~(d) in the statement of Proposition~\ref{cat7}, 
it is to be noticed that the diagram 
$$
\begin{CD}
\Hc^{K_\pi} @<{\gamma}<< \Hc_A\\
@V{\overline{\Theta}_3}VV @VV{C_A}V \\
\Hc^{K_\pi} @<{\gamma}<< \Hc_A
\end{CD}
$$
is commutative, where the mapping $\gamma$ is a suitable isometry
(see \cite{BG08} and the comments after Theorem~\ref{6.1}).

\medskip
The next application is a special case of Theorem~\ref{U6} and yet 
it applies to all of the classical reproducing kernels
(see for instance \cite{Ha82} and Example~I.1.10 in~\cite{Ne00} 
for many specific examples) 
when they are thought of as living on trivial line bundles. 
Moreover, note that the classical reproducing kernel spaces of complex functions 
on domains in ${\mathbb C}^n$ 
(Bergman spaces, Hardy spaces etc.)  
are \emph{separable} Hilbert spaces 
hence are isomorphic to $\ell^2({\mathbb N})$. 
Hence the reproducing kernels of all these classical function spaces 
can be obtained as pull-backs of a \emph{unique} kernel, 
namely the universal reproducing kernel $Q_{\Hc}$ 
for $\Hc=\ell^2({\mathbb N})$. 
See \cite{Od88}~and~\cite{Od92} for the implications of this fact in quantum mechanics.

\begin{theorem}\label{6.1}
Let $\Pi\colon D\to Z$ be a Hermitian vector bundle. 
Denote by $p_1,p_2\colon Z\times Z\to Z$ the natural projections 
and let $K\colon Z\times Z\to\Hom(p_2^*\Pi,p_1^*\Pi)$ 
be a reproducing kernel. 
If $\Hc^K$ stands for 
the reproducing kernel Hilbert space associated with $K$, 
then there exists a vector bundle homomorphism 
$\Delta_K=(\delta_K,\zeta_K)$ from $\Pi$ into the tautological vector bundle  
$\Pi_{\Hc^K}$ such that 
$K$ is equal to the pull-back of $Q_{\Hc^K}$ by $\Delta_K$. 
\end{theorem}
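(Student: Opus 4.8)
The plan is to obtain Theorem~\ref{6.1} as a direct specialization of Theorem~\ref{U6}, the point being that a Hermitian bundle is nothing but a like-Hermitian bundle whose base involution $z\mapsto z^{-*}$ is the identity map, and that for such bundles an ordinary reproducing kernel is exactly a reproducing $(-*)$-kernel. Indeed, when $z^{-*}=z$ the $(-*)$-positive definiteness of Definition~\ref{kernel} reduces to the usual positive definiteness of $K$, so the hypotheses of Theorem~\ref{U6} will be met as soon as we equip $\Gr(\Hc^K)$ with a suitable involutive morphism.

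First I would choose the trivial involutive morphism on $\Pi_{\Hc^K}$. Taking $C=\id_{\Hc^K}$, which is an involutive isometric linear operator, Remark~\ref{U5.6} produces the involutive morphism $\Omega=\Omega_C=(\gamma,\omega)$ with $\omega=\id_{\Gr(\Hc^K)}$ (so that $\Sc^{-*}=\Sc$ for every $\Sc$) and $\gamma_{\Sc}=\id_{\Sc}$. I would then record the two identifications that make the specialization work: the like-Hermitian structure associated with $\Omega$ is $(x\mid y)_{\Sc,\Sc^{-*}}=(x\mid\gamma_{\Sc}(y))_{\Hc^K}=(x\mid y)_{\Hc^K}$, i.e.\ the original Hermitian structure, whence $\Pi_{\Hc^K,\Omega}=\Pi_{\Hc^K}$; and the associated reproducing $(-*)$-kernel is $Q_{\Hc^K,\Omega}(\Sc_1,\Sc_2)=p_{\Sc_1}\circ\gamma_{\Sc_2}=p_{\Sc_1}|_{\Sc_2}=Q_{\Hc^K}(\Sc_1,\Sc_2)$, the universal reproducing kernel of Definition~\ref{U3}.

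With these choices the compatibility hypothesis \eqref{compatible} of Theorem~\ref{U6}, namely $\zeta_K(s^{-*})=\zeta_K(s)^{-*}$, is automatic: both involutions are trivial, so it reads $\zeta_K(s)=\zeta_K(s)$. Theorem~\ref{U6} then applies verbatim and delivers a vector bundle morphism $\Delta_K=(\delta_K,\zeta_K)$ from $\Pi$ into $\Pi_{\Hc^K}$ satisfying $K=(\Delta_K)^*Q_{\Hc^K}$, which is precisely the assertion. I do not anticipate any genuine obstacle here, since the entire content is the reduction to Theorem~\ref{U6}; the only verification worth spelling out is the elementary identification of $\Pi_{\Hc^K,\Omega}$ and $Q_{\Hc^K,\Omega}$ with the tautological bundle and its universal kernel, which the computation above settles.
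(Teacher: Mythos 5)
Your proposal is correct and coincides with the paper's own proof, which likewise obtains Theorem~\ref{6.1} by applying Theorem~\ref{U6} with $\Omega$ taken to be the identity involutive morphism on the tautological bundle, so that the compatibility condition \eqref{compatible} is trivially satisfied and $Q_{\Hc^K,\Omega}=Q_{\Hc^K}$. The only difference is that you spell out the identifications $\Pi_{\Hc^K,\Omega}=\Pi_{\Hc^K}$ and $Q_{\Hc^K,\Omega}=Q_{\Hc^K}$ explicitly, which the paper leaves implicit.
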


\begin{proof}
The hypothesis of Theorem~\ref{U6} is clearly satisfied 
if we take $\Omega$ to be the identity morphism on 
the tautological vector bundle $\Pi_{\Hc}$. 
\end{proof}

Finding applications of Theorem~\ref{U6} in other concrete cases involving general involutions $-*$, 
for which condition~\eqref{compatible} is automatically fulfilled, does not seem simple. 
Instead, one has to use complexifications of bundles 
of the type $\Tc_{\Sc_0}(\Hc)\to\Gr_{\Sc_0}(\Hc)$,  $\Sc_0\in\Gr(\Hc)$, as in Theorem~\ref{U10}.
This theorem clearly applies to bundles $G_A\times_{G_B}\Hc_B\to G_A/G_B$  in the category $\HLH$ and kernels $K^\pi$ accordingly, as in Remark~\ref{kapi}. 
Let us  now assume for simplicity, in that case, that $\Hc_A=\overline{\spann}\, \pi_A(G_A)\Hc_B$. 
(This holds for instance when $A$, $B$ are $C^*$-algebras and 
$G_A\times_{G_B}\Hc_B\to G_A/G_B$ is an object in $\SLH$.)
Then $\Hc_A$ and $\Hc^{K_\pi}$ are isomorphic Hilbert spaces via the  unitary \lq\lq realization" operator 
$\gamma\colon h\mapsto F_h,\ \Hc_A\to\Hc^{K_\pi}$ where 
$F_h\colon u\G_B\mapsto[(u,P(\pi^{-1}(u)h))],\ G_A\times_{G_B}\Hc_B\to G_A/G_B$ (see \cite{BG08}). 
The translation of Theorem~\ref{U10} in the above situation is that 
$K^\pi=\widetilde\Delta_{K^\pi}^*Q_{\Hc_A}^\C$ where 
$\widetilde\Delta_{K^\pi}=(\tilde\delta_{K^\pi},\tilde\zeta_{K^\pi})$ with 
$\tilde\delta_{K^\pi}=\pi_A\times\id_{\Hc_B}$ and $\tilde\zeta_{K^\pi}=(\pi_A)_q$. 
In particular the morphism $(\pi_A\times\id_{\Hc_B},(\pi_A)_q)$ is 
a (extremal) morphism between the kernels $K^\pi$ and $Q_{\Hc_A}^\C$.

To go further in this direction, note that it has been proven in \cite{BG08} that 
the representation of $G_A$ on $\Hc_A$ can be realized as 
multiplication of the elements of $G_A$ on the Hilbert space $\Hc^K$. 
This realization is implemented by the formula
$$
\pi_A(u)=\gamma^*\circ(u\ \cdot\ )\circ\gamma\qquad (u\in G_A)
$$
In this sense, and according to what has been indicated formerly, 
one can say that $\pi_A$ is the pull-back of 
the natural multiplication (collineation) operator associated with 
the Stiefel bundle 
$\Gc\times_{\Gc(p)}\Ran p\to\Gc/{\Gc(p)}$.

Finally, as our last application here, we show next a similar result for completely positive mappings.
So let us assume once again the setting of subsection~\ref{subsect1.4}. Then
$\Phi(a)=V^*\circ\pi_A(a)\circ V$, ($a\in A$), where
$\Phi\colon A\to\Bc(\Hc_0)$ is a unital completely positive mapping,
$\pi_A\colon A\to\Bc(\Hc_A)$ is a Stinespring dilation of $\Phi$, and 
$V\colon \Hc_0\to\Hc_A$ is the corresponding isometry between the Hilbert spaces $\Hc_0$ and $\Hc_A$. 
Note that $V(\Hc_0)$ is a Hilbert subspace of $\Hc_A$.
Let $\Phi_{V(\Hc_0)}$ denote the unital, completely positive mapping
$$ 
\Phi_{V(\Hc_0)}\colon T\mapsto p_{V(\Hc_0)}\circ T\circ\iota_{V(\Hc_0)},\ 
\Bc(\Hc_A)\to\Bc(V(\Hc_0))
$$
associated with $V(\Hc_0)$ as in Proposition~\ref{cpgras}.

\begin{lemma}\label{pullcom}
In the above notation, for every $a\in A$ we have 
$\Phi(a)=V^*\circ\Phi_{V(\Hc_0)}(\pi_A(a))\circ V$.
\end{lemma}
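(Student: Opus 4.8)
The plan is to reduce the whole statement to the single algebraic identity $VV^*=p_{V(\Hc_0)}$, which holds because $V\colon\Hc_0\to\Hc_A$ is an isometry and hence $VV^*$ is precisely the orthogonal projection onto its range $\Ran V=V(\Hc_0)$. Granting this, together with the defining property $V^*V=\id_{\Hc_0}$ of an isometry and the Stinespring factorization $\Phi(a)=V^*\pi_A(a)V$ recorded just before the lemma, the argument collapses to a one-line computation and requires no analytic input beyond these facts.

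Concretely, I would first unwind the definition of the compression map from Proposition~\ref{cpgras}: for $T\in\Bc(\Hc_A)$ one has $\Phi_{V(\Hc_0)}(T)=p_{V(\Hc_0)}\circ T\circ\iota_{V(\Hc_0)}\colon V(\Hc_0)\to V(\Hc_0)$, where $\iota_{V(\Hc_0)}$ is the inclusion and $p_{V(\Hc_0)}$ the orthogonal projection. Since $V$ maps $\Hc_0$ into $V(\Hc_0)$, the inclusion is absorbed, i.e. $\iota_{V(\Hc_0)}\circ V=V$ as maps $\Hc_0\to\Hc_A$, so that for $T=\pi_A(a)$,
\[
V^*\circ\Phi_{V(\Hc_0)}(\pi_A(a))\circ V
= V^*\,p_{V(\Hc_0)}\,\pi_A(a)\,V.
\]

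Then I would substitute $p_{V(\Hc_0)}=VV^*$ and regroup the factors:
\[
V^*\,p_{V(\Hc_0)}\,\pi_A(a)\,V
= V^*(VV^*)\pi_A(a)V
= (V^*V)\,V^*\pi_A(a)V
= V^*\pi_A(a)V
= \Phi(a),
\]
using $V^*V=\id_{\Hc_0}$ in the third step and the Stinespring relation in the last. This yields the claimed equality $\Phi(a)=V^*\circ\Phi_{V(\Hc_0)}(\pi_A(a))\circ V$.

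The only delicate point --- the \textbf{}``main obstacle'', such as it is --- is the bookkeeping of the maps $\iota_{V(\Hc_0)}$, $p_{V(\Hc_0)}$ and of the corestriction of $V$ onto $V(\Hc_0)$, so that each composition occurring in $V^*\circ\Phi_{V(\Hc_0)}(\pi_A(a))\circ V$ is genuinely well defined. Once one records the two elementary consequences $VV^*=p_{V(\Hc_0)}$ and $V^*p_{V(\Hc_0)}=V^*$ of the isometry property, all these identifications disappear and the proof is complete; I expect no serious difficulty beyond this routine verification.
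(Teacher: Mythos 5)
Your proof is correct and follows essentially the same route as the paper: the paper's own argument also reduces everything to the elementary isometry/projection identities, phrased as $V^*=(\iota_{V(\Hc_0)}\circ V)^*=V^*p_{V(\Hc_0)}$ rather than your equivalent $p_{V(\Hc_0)}=VV^*$ combined with $V^*V=\id_{\Hc_0}$. The bookkeeping of $\iota_{V(\Hc_0)}$ and the corestriction that you flag is handled in the paper in exactly the same implicit way, so there is nothing to add.
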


\begin{proof} 
First note that there exists the composition $\iota_{V(\Hc_0)}\circ V$ and 
$$
V^*=(\iota_{V(\Hc_0)}\circ V)^*=V^*\iota_{V(\Hc_0)}^*=V^*p_{V(\Hc_0)}.
$$
Hence, for each $a\in A$,
$$
V^*\Phi_{V(\Hc_0)}(\pi_A(a))V
= V^*(p_{V(\Hc_0)})\pi_A(a)\iota_{V(\Hc_0)})V 
=(V^*p_{V(\Hc_0)})\pi_A(a)(\iota_{V(\Hc_0)}V)
=V^*\pi_A(a)V=\Phi(a),
$$
as we wanted to show.
\end{proof}

Looking at $\Phi$ and $\Phi_{V(\Hc_0)}\circ\pi_A$ as sections of appropriate trivial bundles, 
as in Example~\ref{triv}, Lemma~\ref{pullcom} says that 
$\Phi$ can be regarded as the pull-back of $\Phi_{V(\Hc_0)}\circ\pi_A$. 
Moreover, we have the following result.

\begin{proposition}\label{morcp}
The pair $(\pi_A,V)$ is a (extremal, with $M=1$) morphism in 
the category $\Cp$ from $\Phi$ into $\Phi_{V(\Hc_0)}$.
\end{proposition}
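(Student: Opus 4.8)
The plan is to exhibit $(\pi_A,V)$ as a morphism in $\Cp$ between the two completely positive maps $\Phi\colon A\to\Bc(\Hc_0)$ and the compression object $\Phi_{V(\Hc_0)}\colon\Bc(\Hc_A)\to\Bc(V(\Hc_0))$ produced by Proposition~\ref{cpgras} with $\Hc=\Hc_A$, $\Sc_0=\Kc=V(\Hc_0)$ and $p=p_{V(\Hc_0)}$. First I would fix the roles of the two components: the $*$-homomorphism part is $\alpha:=\pi_A\colon A\to\Bc(\Hc_A)$, which is a unital $*$-representation by the Stinespring construction, and the Hilbert-space part is $T:=V$, now regarded as the bounded operator $V\colon\Hc_0\to V(\Hc_0)$, which is an isometry onto its range. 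Thus both components are of the required algebraic type, and it remains to check the compatibility conditions (a) and (b) defining a morphism in $\SLH$, together with the growth estimate \eqref{comPos} that upgrades an $\SLH$-morphism to a $\Cp$-morphism.

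For condition (b), namely $\Phi_{V(\Hc_0)}(\pi_A(b))\circ V=V\circ\Phi(b)$, I would use the single identity $VV^*=p_{V(\Hc_0)}$, valid because $V$ is an isometry with range $V(\Hc_0)$. Since $\iota_{V(\Hc_0)}\circ V=V$, one computes
$$
\Phi_{V(\Hc_0)}(\pi_A(a))\circ V
=p_{V(\Hc_0)}\,\pi_A(a)\,\iota_{V(\Hc_0)}\,V
=p_{V(\Hc_0)}\,\pi_A(a)\,V
=VV^*\pi_A(a)V
=V\,\Phi(a),
$$
which even holds for all $a\in A$; this is the operator form of Lemma~\ref{pullcom}. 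Condition (a) amounts to $\pi_A(B)\subseteq\{p\}'=\Ran E_p$, where $B$ is the subalgebra carried by the source object and $E_p$ is the canonical expectation of Proposition~\ref{cpgras}; it ties the conditional-expectation datum of $\Phi$ to $E_p$ on $\Bc(\Hc_A)$. I expect this to be the main obstacle, and I would handle it by equipping the source $\Phi$ with conditional-expectation data compatible with the projection $p=p_{V(\Hc_0)}$, so that $\pi_A$ carries $B$ into $\{p\}'$.

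The substance of the statement is that the morphism is \emph{extremal}, i.e.\ that \eqref{comPos} holds with $M=1$ and in fact with equality, and I would verify this by a direct computation: for a positive matrix $(a_{lj})\in M_n(A)$ and $h_1,\dots,h_n\in\Hc_0$,
$$
\begin{aligned}
\sum_{l,j=1}^n\bigl(\Phi_{V(\Hc_0)}(\pi_A(a_{lj}))V h_l\mid V h_j\bigr)_{V(\Hc_0)}
&=\sum_{l,j=1}^n\bigl(p_{V(\Hc_0)}\pi_A(a_{lj})V h_l\mid V h_j\bigr)_{\Hc_A} \\
&=\sum_{l,j=1}^n\bigl(\pi_A(a_{lj})V h_l\mid V h_j\bigr)_{\Hc_A} \\
&=\sum_{l,j=1}^n\bigl(V^*\pi_A(a_{lj})V h_l\mid h_j\bigr)_{\Hc_0}
=\sum_{l,j=1}^n\bigl(\Phi(a_{lj})h_l\mid h_j\bigr)_{\Hc_0},
\end{aligned}
$$
where the first step uses $\iota_{V(\Hc_0)}V=V$ and that the scalar product of $V(\Hc_0)$ is the restriction of that of $\Hc_A$, the second uses $p_{V(\Hc_0)}^*=p_{V(\Hc_0)}$ with $p_{V(\Hc_0)}V h_j=V h_j$, and the last uses the dilation relation $\Phi(a)=V^*\pi_A(a)V$ from \eqref{dilcom}. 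This yields equality in \eqref{comPos}, so $M=1$ is admissible and the morphism is extremal, completing the identification of $\Phi$ as the pull-back of $\Phi_{V(\Hc_0)}$ in the sense of Example~\ref{triv} and Lemma~\ref{pullcom}.
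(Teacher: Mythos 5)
Your verification of condition (b) and of the extremal estimate with $M=1$ is correct and matches the paper's computation (your operator identity $p_{V(\Hc_0)}=VV^*$ is in fact a cleaner route to (b) than the paper's scalar-product argument). The problem is condition (a), which you flag as ``the main obstacle'' but then do not actually resolve, and where your setup is off. The target object produced by Proposition~\ref{cpgras} in this application is $(\Bc(\Hc_A),\{p\}',E_p;\Phi_{V(\Hc_0)})$ with $\Sc_0=\Hc_B$ and $\Kc=V(\Hc_0)\subseteq\Sc_0$, so the projection $p$ entering the conditional expectation $E_p$ is the orthogonal projection of $\Hc_A$ onto $\Hc_B$, \emph{not} $p_{V(\Hc_0)}$. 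With your choice $\Sc_0=\Kc=V(\Hc_0)$, condition (a) would read $\pi_A(b)=E_{p_{V(\Hc_0)}}(\pi_A(b))$, i.e.\ $\pi_A(b)$ commutes with $p_{V(\Hc_0)}$ for all $b\in B$; this is false in general, since $\pi_A(b)Vh=b\otimes h+N_A$ lies in $\Hc_B$ but need not lie in $V(\Hc_0)=\{\1\otimes h+N_A\}$.

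With the correct identification, condition (a) reduces to $\pi_A(B)\subseteq\{p\}'=\Ran E_p$ for $p=p_{\Hc_B}$, and this does hold: by Lemma~\ref{two_squares} one has $P\circ\pi_A(b)=\pi_B(b)\circ P$ for $b\in B$, and combining this with the adjoint relation for $b^*$ gives $P\pi_A(b)=\pi_A(b)P$, whence $E_p(\pi_A(b))=\pi_A(b)$. This is exactly the one-line first paragraph of the paper's proof (``$\pi_A(b)\in\{p\}'$, see subsection~1.4''), and it is the piece your proposal is missing; ``equipping the source with compatible conditional-expectation data'' is not available as a free move, because the conditional expectation $E\colon A\to B$ is part of the given object $\Phi$ and the target expectation $E_p$ is fixed by Proposition~\ref{cpgras}.
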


\begin{proof}
Let $p$ be the orthogonal projection from $\Hc_A$ onto $\Hc_B$. 
Take $b$ in $B$. 
We know that $\pi_A(b)\in\{p\}'$, see subsection~\ref{subsect1.4}. 
Thus 
$E_p \pi_A(b)=p\pi_A(b)p+(1-p)\pi_A(b)(1-p)=\pi_A(b)[p+(1-p)]=\pi_A(b)$.

By Lemma~\ref{pullcom} we get 
$V\Phi(a)=\Phi_{V(\Hc_0)}(\pi_A(a))$ for all $a\in A$. 
In fact, for every $x_0,y_0\in\Hc_0$,
$$
(V\Phi(a)x_0\mid Vy_0)_{V(\Hc_0)}
=(\Phi(a)x_0\mid y_0)_{\Hc_A} 
=(V^*\Phi_{V(\Hc_0)}Vx_0\mid y_0)_{\Hc_A}
=(\Phi_{V(\Hc_0)}Vx_0\mid Vy_0)_{V(\Hc_0)}.
$$
In particular the equality holds for every $b\in B\subseteq A$.
Consequently we have shown that $(\pi_A,V)$ is a morphism in the category $\SLH$, 
see after Lemma~\ref{two_squares} in subsection~\ref{subsect1.4}.

Now, for every integer $n$, $h_1,\dots, h_n\in\Hc_0$ and a matrix $(a_{ij})\subseteq M_n(A)$ we have 
$$
\begin{aligned}
\sum_{i,j=1}^n(\Phi_{V(\Hc_0)}(\pi_A(a_{ij}))Vh_j\mid Vh_i)_{V(\Hc_0)}
&=\sum_{i,j=1}^n((V^*\Phi_{V(\Hc_0)}(\pi_A(a_{ij})V)h_j\mid Vh_i)_{\Hc_A} \\
&=\sum_{i,j=1}^n (\Phi(a_{ij})h_j\mid h_i)_{\Hc_A}.
\end{aligned}
$$
This means that, in fact, $(\pi_A,V)$ is a morphism in $\Cp$.
\end{proof}

In summary, in what concerns completely positive mappings, it has been shown in the paper that they can be viewed as objects in a category, called here $\Cp$. 
Proposition~\ref{morcp} says furthermore that completely positive mappings in that category are in fact the pull-back of canonical objects, which are universal in this sense, and are the completely  positive mappings associated to Grassmannian tautological bundles, as given by Proposition~\ref{cpgras}.

\appendix 

\section*{Appendix: List of categories}

\noindent $\LH$ %the category of like-Hermitian bundles 
(end of subsection~\ref{subsect1.1}); 
$\GLH$ (Definition~\ref{relationship}); 
$\Rg$ (Definition~\ref{transrel}); 
$\HLH$ (subsection~\ref{subsect1.3}); 
$\SLH$ (after Remark~\ref{two_sq}); 
$\Kern$ (Definition~\ref{cat1}); 
$\Hilb$ (Definition~\ref{cat3}); 
$\Tran$, $\Rep$, $\Cp$ (Remark~\ref{SevCat}).

\end{document}